\numberwithin{equation}{section}
\DeclarePairedDelimiter{\abs}{\lvert}{\rvert}
\DeclarePairedDelimiter{\norm}{\lVert}{\rVert}
\DeclarePairedDelimiter{\bra}{(}{)}
\DeclarePairedDelimiter{\pra}{[}{]}
\DeclarePairedDelimiter{\set}{\{}{\}}
\DeclarePairedDelimiter{\skp}{\langle}{\rangle}
\newcommand{\cD}{\mathcal{D}}
\newcommand{\cE}{\mathcal{E}}
\newcommand{\cR}{\mathcal{R}}
\newcommand{\cS}{\mathcal{S}}
\renewcommand{\L}{\operatorname{L}}
\newcommand{\W}{\operatorname{W}}
\newcommand{\BV}{\operatorname{BV}}
\newcommand{\TV}{\operatorname{TV}}
\newcommand{\wstarlim}{\stackrel{*}{\rightharpoonup}}
\newcommand{\D}{\mathrm{D}}
\renewcommand{\d}{\dx}
\newcommand{\CE}{\mathrm{CE}}
\newcommand{\one}{1\!\!1}
\newcommand{\N}{\mathbb{N}}
\newcommand{\Z}{\mathbb{Z}}
\newcommand{\R}{\mathbb{R}}
\newcommand{\Prob}{\mathcal{P}}
\newcommand{\dd}{\dx}
\DeclareMathAlphabet{\mathup}{OT1}{\familydefault}{m}{n}
\newcommand{\dx}[1]{\mathop{}\!\mathup{d} #1}
\newcommand{\eps}{\varepsilon}
\newcommand{\AC}{\textup{AC}}
\DeclareMathOperator{\PME}{PME}
\newcommand{\tnorm}[1]{{\left\vert\kern-0.25ex\left\vert\kern-0.25ex\left\vert #1 
    \right\vert\kern-0.25ex\right\vert\kern-0.25ex\right\vert}}
\newenvironment{manualresult}[1]{%
	\manualresultinner
}{\endmanualresultinner}    
\newtheorem{theorem}{Theorem}[section]
\newtheorem{proposition}[theorem]{Proposition}
\newtheorem{lemma}[theorem]{Lemma}
\newtheorem{corollary}[theorem]{Corollary}
\newtheorem{definition}[theorem]{Definition}
\newtheorem{assumption}[theorem]{Assumption}
\newtheorem{remark}[theorem]{Remark}
\newcommand{\s}{\mathrm{s}}
\newcommand{\CC}{\mathsf{C}}
\newcommand{\CCC}{\mathsf{C}\!\!\rule[0.05em]{0.1em}{0.6em}\;}
\newcommand{\bbT}{\mathbb{T}}
\date{\today}
\def\dd{\:\mathrm{d}}
\def\bbT{\mathbb T}
\def\rmC{\mathrm C}
\def\rmH{\mathrm H}
\def\rmL{\mathrm L}
\def\rmW{\mathrm W}
\newcommand{\mfD}{\mathfrak D}
\newcommand{\sigmaAver}{\overline{\sigma}}
\newcommand{\mAver}{\overline{m}}
\def\qcrit{{q_\mathrm{crit}}}
\begin{document}

\title{Derivation of the fourth-order DLSS equation with\\
       nonlinear mobility via chemical reactions}

\author[1]{Alexander Mielke}
\author[2]{André Schlichting}
\author[3]{Artur Stephan}
\affil[1]{{
\small
Weierstra\ss{}-Institut, Berlin, Germany, email:
\texttt{alexander.mielke@wias-berlin.de}
}}

\affil[2]{{
\small
Universität Ulm, Germany, email:
\texttt{andre.schlichting@uni-ulm.de}
}}
\affil[3]{{
\small
Technische Universität Wien, Vienna, Austria, email:
\texttt{artur.stephan@tuwien.ac.at}
}}

\date{\today}

\maketitle

\begin{abstract}
We provide a derivation of the  {one-dimensional} fourth-order DLSS equation 
based on an interpretation
as a chemical reaction network. We consider the rate equation on the discretized
circle for a process in which pairs of particles occupying the same site
simultaneously jump to the two neighboring sites; the reverse process involves pairs
of particles at adjacent sites simultaneously jumping back to the site located between
them. Depending on the rates, in the vanishing-mesh-size limit we obtain either the
classical DLSS equation or a variant with nonlinear mobility of power type. Via EDP
convergence, we identify the limiting gradient structure to be driven by entropy
with respect to a generalization of diffusive transport with nonlinear mobility.
Interestingly, the DLSS equation with power-type mobility shares
qualitative similarities with the fast diffusion and porous medium equation,
since we find traveling wave solutions with algebraic tails or
compactly supported polynomials, respectively.
\end{abstract}

\renewcommand{\thefootnote}{\fnsymbol{footnote}} 
\footnotetext{\emph{Acknowledgement:} The authors thank Daniel Matthes and Herbert Spohn 
for discussions on properties of the DLSS equation and its derivation.
\\[2pt]
\emph{Keywords:} fourth-order nonlinear evolution equation, gradient-flow equation in 
continuity-equation format, energy-dissipation principle, chemical reaction network, 
discrete-to-continuum limit.
\\[2pt]
\emph{Mathematics subject classification (2020):} 35A15, 35K55, 35A35, 47J35, 65M08.
\\[2pt]
\emph{Funding:}  
The research of ASch is partially based upon work from COST Action 24122 mSPACE, supported 
by COST (European Cooperation in Science and Technology), www.cost.eu.
The research of AM has been partially funded by DFG through the Berlin Mathematics 
Research Center MATH+ (EXC-2046/1, grant no.\ 390685689) subproject 
	``DistFell''.}
\renewcommand{\thefootnote}{\arabic{footnote}} 

\section{Introduction}

In this paper, we provide a microscopic derivation of a generalization of the 
Derrida-Lebowitz-Speer-Spohn (DLSS)  {equation} with nonlinear mobility given by
\begin{equation}\label{eq:DLSS:alpha}
	\partial_t \rho = - \partial_{xx} \bra[\big]{ \rho^{\alpha} \partial_{xx} \log \rho }
\end{equation}
on the torus $\mathbb{T}^1 = \mathbb{R}/\mathbb{Z}$. We consider mobility exponents 
$\alpha>0$. The case with linear mobility $\alpha=1$ was derived
in~\cite{DerridaLebowitzSpeerSpohn1991a,DerridaLebowitzSpeerSpohn1991b} as the
law of fluctuations of the toom interface model, but also occurs in the
quantum-drift diffusion~\cite{DegondMehatsRinghofer}. 
The generalization with mobility $\alpha\ne 1$ is so far not studied in the literature. 
We will derive~\eqref{eq:DLSS:alpha} as the macroscopic limit of rate 
equations for a binary reaction network. In this situation $\alpha=2$ is a natural 
choice, but our approach provides a derivation of~\eqref{eq:DLSS:alpha} for 
all $\alpha>0$. 

The work focuses on the derivation of~\eqref{eq:DLSS:alpha} from a microscopic 
model and provides a first analytic framework identifying possible weak solutions 
and the {thermodynamically} consistent a priori bound 
for~\eqref{eq:DLSS:alpha}. However, 
we expect that~\eqref{eq:DLSS:alpha} has a rich dynamical behaviour mimicking that 
of the fast diffusion $(\alpha<1)$ and porous medium equation $(\alpha>1)$. The
details are left for further studies and we provide at the end of the introduction
some first indication in this direction. 
In detail, we provide traveling wave solutions on $\R$ with algebraic tails 
($\alpha<1)$ or compact support $(\alpha>1)$ as well as numerical tests of 
source-type solutions mimicking the according behaviour of the tails.

Our derivation of~\eqref{eq:DLSS:alpha} is motivated by a physical interpretation 
of  the recently proposed numerical scheme~\cite{MRSS} in the case of $\alpha=1$ as a
chemical network of $N\in \mathbb{N}$ binary reactions of the type 
\begin{equation}\label{eq:reaction}
	X_{k-1}+X_{k+1}\xrightleftharpoons{\quad} 2X_{k}  \,, \qquad\text{ for } 
    k\in [N] := \set[\big]{1,\dots,N} \,.
\end{equation}
We consider the vector of concentrations $c=\bigl(c_{k}\bigr)_{k=1,\dots,N}
\in \mathcal{P}^N$, that is a discrete probability measure on $[N]$, where each 
entry provides the concentration of the species $X_k$ in~\eqref{eq:reaction} 
(with the convention $c_0=c_N$). 
They evolve according to the chemical reaction-rate equation by
\begin{subequations}\label{eq:RRE}
	\begin{align}
	\dot{c}_{k} &= N^2 \bra[\big]{ J_{\alpha,k-1}[c] - 2J_{\alpha,k}[c]+ J_{\alpha,k+1}[c]} \label{eq:RRE:CEN} \\
	&\quad\text{with}\quad J_{\alpha,k}[c] =  \sigma_\alpha(c_{k-1},c_k,c_{k+1}) N^2 \bigl( c_{k}^{2}-c_{k-1}c_{k+1} \bigr) \label{eq:RRE:flux} \,,
\end{align}
\end{subequations}
 {incorporating the stoichiometric vector $(\dots,1,{-}2,1,\dots)^\mathrm{T}$ of the 
chemical reaction \eqref{eq:reaction} and the reaction rate (also called \textit{reaction intensity}) 
$\sigma_\alpha\cdot(c_k^2-c_{k-1}c_{k+1})$. The function $\sigma_{\alpha}$, which in the following 
will be called \textit{activity function} plays a crucial role in our modeling as it 
provides a parameter-dependent tunable function to change the reaction rate. 
A possible choice for $\sigma_\alpha$ is motivated from the context of chemical reactions
by the mass action kinetics and is given as $\sigma_2 \equiv 1$.
In general, $\sigma_\alpha$ will be chosen to be homogeneous of degree $\alpha-2$, 
which will turn~\eqref{eq:RRE} into a discrete approximation of~\eqref{eq:DLSS:alpha} 
(see the exact Assumption~\ref{ass:Activity} below).}

Since the system~\eqref{eq:RRE} originates from a rate equation of the chemical 
network~\eqref{eq:reaction}, it has a good global existence theory and also positivity properties.
Crucially, the solution of the chemical reaction-rate equation~\eqref{eq:RRE} can 
be characterized variationally as a generalized gradient flow 
\cite{LieroMielkePeletierRenger2017, PeletierSchlichting2022, Miel23IAGS} 
in continuity-equation format for the entropy
\begin{equation}\label{eq:def:energy}
	E_{N}(c):=\frac{1}{N}\sum_{k=1}^{N}\bra[\big]{ c_{k} \log c_k - c_k + 1} \,.
\end{equation}
The role of the continuity equation is taken by the first equation 
in~\eqref{eq:RRE:CEN} and a general curve $(c,J): [0,T]\to \Prob^N \times \R^N$ 
solving~\eqref{eq:RRE:CEN} is denoted by $(c,J)\in\CE_N$. 
The specific choice of the flux $J_{\alpha}$ in~\eqref{eq:RRE:flux} is encoded 
by an energy-dissipation balance (EDB), which includes the time-integrated 
dissipation functional $D_{\alpha,N}(c,J)$ (see~\eqref{eq:def:dissipation} below), 
and is given by the full energy-dissipation functional
\begin{equation}\label{eq:def:ED-functional}
	L_{\alpha,N}(c,J) := E_{\alpha,N}(c(T))-  E_{\alpha,N}(c(0)) 
    + D_{\alpha,N}(c,J) \,.
\end{equation}
For positive and smooth curves $(c,J)\in\CE_N$ with $c:[0,T]\to \Prob^N_{>0}$, 
the construction in Section~\ref{ssec:discrete:EDB} ensures by the Young-Fenchel
inequality that $L_{\alpha,N}(c,J) \geq 0$. 
In our first result, we identify the classic solutions to~\eqref{eq:RRE} having 
$E_N$ as Lyapunov function as global minimizers of the energy-dissipation 
functional, that is $L_{\alpha,N}(c,J)=0$ and are called \emph{energy-dissipation 
balance (EDB) solutions}. This will be the starting point for the convergence 
analysis of the system. 

\begin{manualresult}{A}[Well-posedness and variational characterization of~\eqref{eq:RRE}]\label{result:discrete}
	Assume the  {activity} $\sigma_\alpha$ satisfies Assumption~\ref{ass:Activity}. 
	For all $c^0 \in \Prob^N$, the system~\eqref{eq:RRE} has a global differentiable 
    solution $c:[0,\infty) \to \Prob^N$ such that $c(t)\in \Prob^N_{>0}$ is strictly
    positive for all $t>0$. The constructed solution is an EDB solution satisfying
    $L_{\alpha,N}(c,J)=0$. 
\end{manualresult}

The Result~\ref{result:discrete} is proven in Proposition~\ref{prop:WellposednessODE} 
and Proposition~\ref{prop:GFDiscrete} in Section~\ref{sec:Discrete}.

We turn next to the limit $N\to \infty$ and note that the total scaling $N^4$ 
in~\eqref{eq:RRE} is chosen to obtain a macroscopic limit, if the concentration 
vector~$c^N$ is embedded into the space of continuous densities on the torus~$\mathbb{T}$. In this scaling, 
we arrive at the fourth-order equation~\eqref{eq:DLSS:alpha}. For the heuristic 
argument, we observe that the outer Laplacian in~\eqref{eq:DLSS:alpha} is already
explicit as formal limit of the discrete second-order continuity equation 
in~\eqref{eq:RRE:CEN}.
The diffusive flux term $\rho^2\partial_{xx}\log \rho$ is generated by the 
\emph{geometric} Laplacian in~\eqref{eq:RRE:flux} by a formal expansion 
\begin{equation}\label{eq:flux:rewrite:expand}
	\begin{split}
	N^2\bra[\big]{ c_k^2- c_{k-1}c_{k+1}}
	&=-  N^2 c_k^2 \pra[\Big]{ \exp \bra[\big]{ \log c_{k-1}+\log c_{k+1} - 2 \log c_k} -1 } \\
	&\approx -\rho(x)^2 \partial_{xx} \log \rho + O(N^{-2}).
\end{split}
\end{equation}

The rigorous proof of the limit is based on the energy-dissipation principle, which 
is a thermodynamic formulation for the discrete \eqref{eq:RRE} as well as 
continuous~\eqref{eq:DLSS:alpha} gradient structure. In fact, our results shows 
that the limiting gradient structure for~\eqref{eq:DLSS:alpha} is driven by the
continuous entropy given by
\begin{equation}\label{eq:def:entropy}
{\cal E}(\rho) =\int_{\mathbb{T}} \bra[\big]{ \rho(x) \log \rho(x) - \rho(x) +1 }\dx x.
\end{equation}
We formulate the limit~\eqref{eq:DLSS:alpha} in terms of a continuous second-order
continuity-type equation and a constitutive relation for the flux, that is
\begin{equation}\label{eq:DLSS:system}
		\partial_t \rho =  {\partial_{xx}} j_{\alpha} \qquad\text{ and }\qquad j_{\alpha} = - \rho^\alpha \partial_{xx} \log \rho \,.
\end{equation}
Again, as in the discrete case, a general solution pair 
$(\rho,j): [0,T]\to \L^1(\mathbb{T})\times \W^{2,1}(\mathbb{T})$ to the second-order 
continuity equation $\partial_t \rho =  {\partial_{xx}} j$ is denoted by 
$(\rho,j)\in \CE$, understood in a suitable weak sense. The specific flux 
$j_{\alpha}=-\rho^\alpha \partial_{xx} \log \rho$ for the solution in~\eqref{eq:DLSS:system} 
is encoded through a suitable total dissipation functional 
$\cD_{\alpha}$, see~\eqref{eq:ContinunousDissipationFunctional}.

 {%
\begin{remark}[Multi-dimensional formulation]\label{rem:MultiD}
While our analysis is confined to space dimension $d=1$, the model has a formal
analogue on the torus $\bbT^d$. The natural generalization of~\eqref{eq:DLSS:alpha}
preserves its fourth-order structure by reading the outer operator $\partial_{xx}$
as a double divergence and the inner operator $\partial_{xx}\log\rho$ as a Hessian,
\[
  \partial_t \rho = - \sum_{i,j=1}^d \partial_i\partial_j\bigl(\rho^\alpha\, \partial_i\partial_j\log\rho\bigr)
  = - D^2 \!:\! \bigl(\rho^\alpha\, D^2\log\rho\bigr),
\]
where $D^2 f=(\partial_i\partial_j f)_{i,j=1}^d$ denotes the Hessian and $A\!:\!B=\sum_{i,j}A_{ij}B_{ij}$.
On the discrete level, the reaction network~\eqref{eq:reaction} generalizes to the
lattice $(\Z/N\Z)^d$ by allowing reactions across all double edges, i.e.\ for every
ordered pair of coordinate directions,
which reassemble into
the discrete Hessian as $N\to\infty$. 
The functional-analytic
difficulties, however, become even more severe than for $d=1$: the compactness
argument underlying Corollary~\ref{cor:StaticBounds} and Lemma~\ref{lem:ImprovedRegularity}
relies on the embedding $\W^{1,4}(\bbT)\subset\L^\infty(\bbT)$, which fails for
$d\geq2$, and the slope representations of Lemma~\ref{lem:RelaxedSlope} no longer
possess a canonical vectorial counterpart. We therefore restrict the present work to~$d=1$.
\end{remark}}

These ingredients provide the limiting gradient structure by defining
the full energy-dissipation functional for a curve $(\rho,j)\in\CE$ by
\begin{align}\label{eq:EDfunctional}
	\mathcal{L}_\alpha(\rho,j):=\cE(\rho(T))-\cE(\rho(0)) + \mathcal{D}_\alpha(\rho,j). 
\end{align}
The total dissipation $\cD_{\alpha}$ consists of the time integral of a primal 
dissipation $\cR_\alpha(\rho,j)$ and the slope $\cS_\alpha(\rho)$. The primal 
dissipation is given for $j\in \L^1(\mathbb{T})$ and $j \d x \ll \rho \d x$ by 
\begin{equation}\label{eq:def:primal:dissipation}
	{\cal R}_\alpha(\rho,j) :=
	\frac 1 2\int_{\mathbb{T}}\frac{j^{2}}{\rho^\alpha}\d x, \qquad\text{ for } j\in\L^1(\bbT) \text{ such that } j \dx{x}\ll \rho \dx{x} \,.
\end{equation}
The slope term $\cS_{\alpha}$ is formally defined by inserting the driving force 
$- {\partial_{xx}} \D\cE(\rho) = -  {\partial_{xx}} \log \rho$ into the dual dissipation 
potential~\eqref{eq:def:dual:dissipation:continuous}, obtained as the 
Legendre-Fenchel dual of $\cR_\alpha$ from~\eqref{eq:def:primal:dissipation}, 
that is $\mathcal{S}_{\alpha,+}(\rho) = \cR_\alpha^*(\rho,- {\partial_{xx}}\D\cE(\rho))$. 
Due to the presense of the logarithm and Laplacian, this is only justified for 
positive and smooth concentrations, where we indeed get
\begin{align}\label{eq:def:slope+}
	{\cal S}_{\alpha,+}(\rho) &:= \frac12  
    \int_{\mathbb{T}}\rho^{\alpha }\bigl( {\partial_{xx}}\log\rho\bigr)^{2}\d x.
\end{align}
The main ingredient of the analysis is its relaxed lower-semicontinuous envelope 
$\cS_\alpha$ defined on a suitable family of non-negative Sobolev functions 
(see Lemma~\ref{lem:RelaxedSlope}).

For this reason, the non-negativity of $\mathcal{L}_{\alpha}$ is only ensured 
for sufficient regular curves $(\rho,j)\in \CE$. Such regularity is a priori not 
known for the obtained solutions to~\eqref{eq:DLSS:alpha}.
Hence, we distinguish two  {notions} of solutions, those satisfying the 
\emph{energy-dissipation balance} (EDB) $\mathcal{L}_\alpha(\rho,j)=0$ and the 
weaker form satisfying the \emph{energy-dissipation inequality} (EDI) 
$\mathcal{L}_\alpha(\rho,j)\leq 0$.

As we will see, the convergence result in Section~\ref{sec:EDPconvergence}
constructs a curve $\rho$ that is a priori only an EDI solution.
 {Precise definitions of the EDI and EDB concepts at the discrete level are 
given in Section~\ref{ssec:discrete:EDB}, and at the continuum level in the 
beginning of Section~\ref{sec:EDPconvergence}.}

\begin{manualresult}{B}[EDP convergence]\label{result:EDP}
	Assume the {activity} $\sigma_\alpha$ satisfies Assumption~\ref{ass:Activity}.\\
	Let $(c^N,J^N)\in \CE_N$ be such that $\sup_{N} L_{\alpha,N}(c^N,J^N)<\infty$ 
    and $\sup_{N} E_N(c^N(0)) <\infty$. Then there exist suitable embeddings 
    $(\iota_N c^N, {\cal I}_N J^N)\in \CE$ such that the sequence 
    $(\iota_N c^N, {\cal I}_N J^N)$ converges to $(\rho,j)\in \CE$ and it holds
	\begin{align*}
		\forall\, t>0: \ \liminf_{N\to\infty}E_{N}(c^{N}(t)) \geq{\cal E}(\rho(t)) 
        \qquad\text{and}\qquad 
		\liminf_{N\to\infty}D_{\alpha,N}(c^{N},J^{N})  \geq{\cal D}_\alpha(\rho,j).
	\end{align*}
	If, in addition the curves $c^N$ are EDB solutions to~\eqref{eq:RRE}, that 
    is $L_{\alpha,N}(c^N,J^N)=0$, and have well-prepared initial data, i.e.\  
    $\iota_Nc^N(0)\to\rho(0)$ and $E_N(c^N(0))\to\cE(\rho(0))$, then the 
    limit curve $(\rho,j)$ is an EDI solution to~\eqref{eq:DLSS:alpha}, that 
    is $\mathcal{L}_{\alpha}(\rho,j)\leq 0$.
\end{manualresult}
The full statement is contained in Theorem~\ref{thm:ConvergenceResult} and proven in Section~\ref{sec:EDPconvergence}.

Under suitable assumptions, we can show that the so-obtained EDI solution is indeed 
an EDB solution by showing that the energy-dissipation functional 
$\mathcal{L}_{\alpha}$ is non-negative on its domain. 
For doing so, we show a \emph{chain-rule} in Proposition \ref{pr:ChainRule}, which 
allows us to conclude that any EDI solution $(\rho,j)$ is already an EDB solution, 
and at the same time identifies a suitable weak solution to~\eqref{eq:DLSS:alpha}.
 {The well-posedness of the weak formulation relies on the algebraic identity 
for smooth positive densities $\rho$
\begin{equation}\label{eq:flux-rewriting}
  \rho^\alpha \partial_{xx} \log \rho = \frac{1}{\alpha}\Bigl(\partial_{xx}\rho^\alpha - 4|\partial_x\rho^{\alpha/2}|^2\Bigr),
\end{equation}
which rewrites the diffusive flux purely in terms of Sobolev-regular quantities 
$\rho^{\alpha/2}\in\W^{2,2}$ and $\rho^{\alpha/4}\in\W^{1,4}$, thereby bypassing the logarithm.}

\begin{manualresult}{C}[EDB and weak solutions]
\label{result:EDB-weaksol} Consider $\alpha>0$ and an EDI solution $(\rho,J)$ 
satisfying one of the following conditions:
\begin{subequations}
	\label{eq:CR.AddiCond}
	\begin{align}
		\label{eq:CR.AddiCond.a}
		& \alpha=1;
		\\
		\label{eq:CR.AddiCond.b}
		&\alpha\in {]0,2]} \ \text{ and } \  \rho \in \rmL^\infty([0,T]{\times} \bbT);
		\\
		\label{eq:CR.AddiCond.c}
		& \alpha>0,  \ \ \rho \in \rmL^\infty([0,T]{\times} \bbT), \text{ and } \
		\exists\,\delta>0: \ \rho(t,x) \geq \delta \text{ a.e.}
	\end{align}
\end{subequations}
Then, $(\rho,j)$ is an EDB solution. Moreover, these EDB solutions are also weak solutions 
of $ \partial_t\rho + \frac1\alpha \partial_{xx}\bigl(\partial_{xx} \rho^\alpha - 
4|\partial_x \rho^{\alpha/2}|^2\bigr)=0$, namely  for all 
$\psi \in \rmC^2_\mathrm{c}({[0,T)}{\times}\bbT)$ it holds
\begin{equation*}
	\int_\bbT \rho(0)\,\psi(0) \dx x + \int_0^T \!\!\int_\bbT 
	\rho \,\partial_t \psi \dx x \dx t \\
	= \int_0^T \!\!\int_\bbT \frac1\alpha \bigl( \partial_{xx} \rho^{\alpha} -
	4|\partial_x \rho^{\alpha/2}|^2\bigr) \;\partial_{xx} \psi \dx x \dx t  \,,
\end{equation*}
where, setting  $p_\alpha=\max\bigl\{(4{+}\alpha)/(2{+}\alpha), 4/3\bigr\}$,  
the flux terms satisfies 
\[
j=  - \frac1\alpha \bigl( \partial_{xx} \rho^{\alpha} -
   4|\partial_x \rho^{\alpha/2}|^2\bigr) =-\frac2\alpha  \rho^{\alpha/2} 
   \bigl( \partial_{xx} \rho^{\alpha/2} - 4|\partial_x \rho^{\alpha/4}|^2\bigr) 
=- \rho^{\alpha/2}  V \in \rmL^{p_\alpha}([0,T]{\times} \bbT). 
\]
If $ \rho \in \rmL^\infty([0,T]{\times} \bbT) $, then we always have $ j \in \rmL^2([0,T]{\times} \bbT)$.
\end{manualresult}
The well-posedness of the term on the right-hand side is ensured by finiteness 
of the relaxed slope~$\cS_{\alpha}$ for variational solutions and a posteriori 
for the identified weak solutions demanding to satisfy the energy dissipation balance, that is
\begin{equation*}
	\frac{\d}{\dx t} \cE(\rho) = - \frac{4}{\alpha^2} \int_{\mathbb{T}} \bigl( \partial_{xx} \rho^{\alpha/2} -
	4|\partial_x \rho^{\alpha/4}|^2\bigr)^2 \dx{x} \qquad\text{ for a.e. } t\in [0,T]\,.
\end{equation*}
 {The three cases in~\eqref{eq:CR.AddiCond} correspond to different dynamical 
regimes. The case~\eqref{eq:CR.AddiCond.a} covers the classical DLSS case $\alpha=1$ 
without further assumptions, here we expect bounded positive solutions for any positive time $t>0$. 
The case~\eqref{eq:CR.AddiCond.b} covers the range $\alpha\in{]0,2]}$ with bounded solutions, 
corresponding to the fast-diffusion and moderate-mobility regimes.
The case~\eqref{eq:CR.AddiCond.c} requires solutions that are bounded and, in addition, 
strictly positive. While this assumption is crucial for our existence analysis, we do not 
expect it to be essential in general. Indeed, the compactly supported profiles observed 
numerically (see Figure \ref{fig:SelfSimi}) are incompatible with this assumption.
In this regime, the identification of EDB solution as weak solution remains an open problem.}

 {The fact that three cases must be distinguished stems from the qualitatively different 
behavior of the dissipation functional $\cal{D}_\alpha$ for the various parameters $\alpha>0$. 
The primal dissipation density $R(\rho,j):=j^2/(2\rho^\alpha)$ is jointly convex in $(\rho,j)$ 
if and only if $\alpha\in(0,1]$. For $\alpha>1$ the lack of joint convexity of $R$ is the key 
obstruction to proving the chain-rule inequality for general EDI solutions: the convexity argument 
that works for quadratic (classical) gradient flows is not available. The additional 
conditions~\eqref{eq:CR.AddiCond.b}--\eqref{eq:CR.AddiCond.c} compensate for this by ensuring 
the integrability and uniform positivity needed for a direct computation.
Similarly, the slope $\cS_\alpha$ is jointly convex (via formula~\eqref{eq:RelaxedSlope.c}) for 
$\alpha\in[3/2,2]$, but not outside this range, as the coefficient $\frac{2\alpha-3}{3}$ changes 
sign at $\alpha=3/2$.}

 {The uniqueness of EDB and weak solutions to~\eqref{eq:DLSS:alpha} is not established 
in this work and is largely open. For $\alpha=1$, uniqueness of positive smooth weak 
solutions is known \cite{JuengelMatthes2008},  but more delicate for data with vanishing 
density~\cite{Fischer-uniqueness}. For $\alpha\ne1$, the equation is new and uniqueness 
is entirely open. The standard De~Giorgi-type argument for uniqueness of EDB solutions 
does not apply, since neither $\cR_\alpha$ nor $\cS_\alpha$ (via~\eqref{eq:RelaxedSlope.c}) 
are jointly convex in $(\rho,j)$ in a common regime for~$\alpha>0$.}

\subsection*{Relation to the literature}

We note that our Results~\ref{result:discrete}--\ref{result:EDB-weaksol} lead to 
several generalizations of the work~\cite{MRSS}, even in the case $\alpha=1$. 
We provide a complete variational EDP convergence and identification of the same 
weak solution constructed as in~\cite{MRSS}. Moreover, we allow for a greater 
range of semi-discretizations based on the choice of the activity function 
$\sigma_{\alpha}$, where our Assumption~\ref{ass:Activity} covers the specific 
choice from~\cite{MRSS}, however at the drawback that our scheme in general 
preserves only the entropy as Lyapunov function, whereas~\eqref{eq:DLSS:alpha} 
for $\alpha=1$ has a rich family of further Lyapunov functions. At this point, 
we refer to~\cite[Sec.\,1]{MRSS} for an extensive discussion of the origin, 
the structural properties and various numerical schemes for the DLSS 
equation ($\alpha=1$) as well as to~\cite{MRS2025} for the discussion of the 
induced distance from the primal dissipation~\eqref{eq:def:primal:dissipation}.

In this work, we pioneer the derivation of~\eqref{eq:DLSS:alpha} for the full 
range~$\alpha>0$ via EDP convergence and provide the existence of variational 
solution satisfying an energy-dissipation inequality or a conditional 
energy-dissipation balance. Both concepts are a generalization of curves of 
maximal slope introduced by De Giorgi~\cite{DeGiorgiMarinoTosques80} 
in the form developed in \cite{AGS2005, RoMiSa08MACD, Miel23IAGS} relying  
on the energy dissipation balance based on a primal and a dual dissipation potential.
The notion of EDP convergence was introduced (informally and without name) in 
\cite{LieroMielkePeletierRenger2017} and then conceptually studied in 
\cite{DoFrMi19GSWE, MiMoPe21EFED, PeletierSchlichting2022} and \cite[Sec.\,5.4]{Miel23IAGS}. It is a 
refinement of the Sandier-Serfaty approach to $\Gamma$-convergence for gradient 
flows~\cite{SandierSerfaty2004, Serfaty2011} and allows to study general multiscale
limits like homogenization~\cite{Mielke2016}, layer-to-membrane limits 
\cite{LieroMielkePeletierRenger2017,FreLie21EDTS,Miel23NESS}, fast-slow reaction 
systems~\cite{FreLie21EDTS, MielkePeletierStephan2021, Stephan2021a, Miel23NESS}, 
or discrete-to-continuum limits~\cite{DisserLiero, HraivoronskaSchlichtingTse2023, 
EspositoHeinzeSchlichting2025, HeinzePietschmannHeinze2024arXiv, HeinzeMielkeStephan2025} 
as in the present paper. 

The derivation of thermodynamically consistent continuum models from stochastic 
or discrete dynamics is a recent undertaking and we comment on some recent 
literature. In the context of linear regular Markov jump processes the 
work~\cite{PRST22} provides an extensive framework. The justification of 
the macroscopic, exponential kinetic relation (Marcellin-De Donder kinetic) 
from stochastic jump processes was derived via large-deviation theory in 
\cite{MielkePeletierRenger2014,MPPR17NETP} which leads to the so-called
\emph{cosh-gradient structure} used for our discrete model as well. In the context 
of discrete coagulation-fragmentation equations, which show a similar quadratic 
structure as~\eqref{eq:RRE}, the $\cosh$-gradient structure was also identified 
in~\cite{HoeksemaLamSchlichting2025}.

The obtained gradient structure for our continuum model~\eqref{eq:DLSS:alpha} can be 
seen as a second order generalization of the recent novel gradient structure obtained 
for the porous medium equation in~\cite{GesHey25PMEL,FehGes23NELD}, that is
\begin{equation}\label{eq:PME}
	\partial_t \rho = \tfrac{1}{\alpha} \partial_{xx}\bra[\big]{ \rho^\alpha}
    = \partial_x \bra[\big]{ \rho^\alpha \partial_x \log \rho} \,.
\end{equation}
The authors derive~\eqref{eq:PME} as the contiuum limit of a suitable rescaled 
zero-range process in the \emph{thermodynamic scaling} limit of infinite many 
particles and large volume such that the density stays order one. In our situation, 
the ODE system~\eqref{eq:RRE} can be already seen as the infinite particle limit of 
a stochastic model, which in the context of chemical reaction corresponds to the 
chemical master equation~\cite{vanKampen2007,MaaMie20MCRS}. The considered limit 
$N\to \infty$ of the manuscript corresponds to the infinite volume limit in that 
language, see~\cite[Figure 1]{GesHey25PMEL}.

The authors in~\cite{GesHey25PMEL} identify the driving energy as the entropy $\cE$ 
as in our case~\eqref{eq:def:entropy}, see~\cite[Chapter 11]{GesHey25PMEL}. Their 
variational formulation is obtained in terms of curves $(\rho,g): [0,T]\to 
\Prob \times \L^2$ solving the skeleton equation 
\begin{equation*}
	\partial_t \rho = \tfrac{1}{\alpha} \partial_{xx} \rho^\alpha + \partial_x(\rho^{\alpha/2} g) \,,
\end{equation*}
with the control $g\in \L^2([0,T];\L^2)$ and a large-deviation rate functional given 
by $\mathcal{J}_\alpha(\rho,g)= \tfrac{1}{2}\norm{g}_{\L^2\L^2}^2$, where 
$\mathcal{J}_\alpha \equiv 0$ characterizes suitable weak solutions 
to~\eqref{eq:PME}. Their result~\cite[Thm.\,4]{GesHey25PMEL} implies that the 
functional $\mathcal{J}_\alpha$ has indeed the structure of an energy-dissipation 
functional similar to $\mathcal{L}_\alpha$ in~\eqref{eq:EDfunctional}. Indeed, 
along suitable curves $(\rho(t),j(t))_{t\in [0,T]}$ solving the (first-order) 
continuity equation $\partial\rho + \partial_x j=0$ in the weak sense, it holds
\begin{equation*}
	\mathcal{J}_\alpha(\rho,g) = \cE(\rho(T)) - \cE(\rho(0)) + \int_0^T \bra[\big]{ \cR_{\alpha}(\rho,j) + \cS_{\alpha}^{\PME}(\rho)} \dx{t} \,,
\end{equation*}
where $\cR_{\alpha}$ is as in~\eqref{eq:def:primal:dissipation} and 
$ \cS_{\alpha}^{\PME} = \frac{2}{\alpha^2} \int \abs[big]{\partial_x 
\rho^{\alpha/2}}^2 \dx{x} $. Again as in our case $\cS_{\alpha}^{\PME}$ can be 
seen as the lower semicontinuous relaxation of the functional $\int \rho^\alpha 
\abs[\big]{ \partial_x \log \rho}^2 \dx{x}$. In this sense, the DLSS equation with
mobility $\alpha>0$ in~\eqref{eq:DLSS:alpha} is a generalization of the porous 
medium equation with respect to a second-order thermodynamic (degenerate) metric 
induced by $\cR_{\alpha}$ from~\eqref{eq:def:primal:dissipation} together with 
the second-order continuity equation given in terms of the first equation 
in~\eqref{eq:DLSS:system}.

The classical DLSS equation ($\alpha=1$) has, besides the gradient structure based on
diffusive transport and driving functional given by the entropy, another gradient structure
based on classical Otto--Wasserstein tensors driven by the Fisher
information~\cite{GianazzaSavareToscani2009}.  {For $\alpha\ne1$, extending this
formulation faces a fundamental obstruction: the natural candidate, a weighted
Fisher-information functional with suitable chosen mobility of power-type, 
does not directly
yield the second-order flux coupling $j_\alpha=-\rho^\alpha\partial_{xx}\log\rho$
in~\eqref{eq:DLSS:system} from an Otto--Wasserstein gradient flow in the standard
sense~\cite{Otto1998,GiacomelliOtto2001,MatthesMcCannSavare2009} as we show in 
Appendix~\ref{appendix:OttoFisher} (except for special $H^{-1}$ gradient flow 
corresponding to the case $\alpha=-1$, not covered in this work). Instead, these references
provide Otto--Wasserstein gradient-flow structures for the thin-film equation
$\partial_t\rho=\partial_{xx}(m(\rho)\partial_{xx}\rho)$ with a general mobility.
Hence, it seems that the introduced class of equations~\eqref{eq:DLSS:alpha} for 
$\alpha\ne 1$ in this manuscript is fundamentally different.
Moreover, the microscopic cosh-gradient structure developed here, which
arises naturally from the chemical reaction interpretation, provides a
variational formulation valid for all $\alpha>0$ that is thermodynamically consistent
and well-adapted to the discrete-to-continuum limit with driving energy given by the entropy.}

\subsection*{Traveling fronts and similarity profiles}

We close the introduction with an
illustration of some of the expected novel features of~\eqref{eq:DLSS:alpha} due 
to the nonlinear mobility by investigating traveling front solutions and doing 
some numerical experiments. 

First, we provide on the real line $\R$ solutions to the equation 
\[
\partial_t \rho =  -\partial_{xx} \bigl( \rho^{\alpha}\partial_{xx} \log \rho \bigr)
 = - \partial_{xx} \Bigl( \rho^{\alpha-2}\bigl(\rho \partial_{xx} \rho - (\partial_x \rho)^2 \bigr) \Bigr) , \qquad t>0,\ x\in \R,
\]
We observe that for $\alpha>1$ there are explicit solutions that have a moving support. 
We consider the ansatz
\[
\rho(t,x)= \kappa \bigl(ct-x\bigr)^\delta \text{ for } x< ct \quad \text{and}
\quad \rho(t,x)=0 \text{ for }x >ct,
\]
with $\kappa >0$. 
We obtain an explicit solution for $x<ct$ if we choose
\[
\delta=\frac3{\alpha{-}1} \quad \text{and} \quad c=\kappa^{\alpha-1} \,\frac{3(\alpha{+}2)}{(\alpha{-}1)^2}>0. 
\]
For $\alpha\in {]1,7/4[}$ the solution is a classical solution lying in 
$\mathrm C^4(\R)$. For $\alpha\geq 7/4$ the solution is still a weak solution 
if properly defined. Note that $\rho^{\alpha-1}\partial_{xx} \rho$ and 
$\rho^{\alpha-2}(\partial_x \rho)^2$ both behave like $(ct-x)^\sigma$ with 
$\sigma=\alpha \delta{-}2=(\alpha+2)/(\alpha-1)>1$.

Moreover, we are able to look for self-similar solutions of having the dynamical 
scaling form $\rho(t,x)= t^{-\gamma} \Phi(x/t^\gamma)$ for a profile function 
$\Phi:\R\to\R$. As the right-hand  side is homogeneous of degree $\alpha$ in 
$\rho$ we find $ \gamma= \frac1{3+\alpha}$. Using the similarity variable 
$y=x t^{-\gamma}$, we find the ODE for the profile function
\[
-\gamma \Phi(y) - \gamma y\Phi'(y) = - \Bigl(\Phi^{\alpha-2} \bigl( \Phi \Phi'' -(\Phi')^2 \bigr) \Bigr)''. 
\]
The left-hand side is a derivative, so we can integrate once and are left with
\begin{equation*}
	\frac1{3+\alpha}y\,\Phi= \Bigl( \Phi^{\alpha-2} \bigl( \Phi \Phi'' -(\Phi')^2 \bigr) \Bigr)'.
\end{equation*}
 {Note, that the integration constant vanishes, since for symmetric solutions $\Phi(-y)=\Phi(y)$ the left-hand side $\frac{1}{3+\alpha}y\,\Phi(y)$ is odd in $y$, and the right-hand side (the derivative of an even function) is likewise odd; hence both sides vanish at $y=0$, which forces the constant to be $0$.}
For the DLSS case with $\alpha=1$ there is the explicit solution $\Phi(y)=\mathrm e^{-y^2/4}$, see~\cite{Bleher}. 

In general, we expect symmetric solutions (i.e.\ $\Phi(-y)=\Phi(y)$) and thus can produce solutions by a shooting method starting with $\Phi(0)=1$, $\Phi'(0)=0$, and $\Phi''(0)=b$, where $b$ needs to be varied to find a sufficiently smooth, non-negative solution in $L^1(\R)$. Figure \ref{fig:SelfSimi} displays the corresponding solutions $\Phi_\alpha$ for $\alpha\in\{-1,\ldots,5\}$. Clearly, for $\alpha>1$ the behavior close to the moving boundary of the support is given by the traveling fronts as constructed above.  {We note that the rigorous existence and uniqueness of the symmetric similarity profile $\Phi_\alpha$ for $\alpha\ne1$ (e.g., via a phase-plane shooting argument as used for the porous medium equation~\cite[Chapter 16]{Vazquez2006}) remains an open problem.} 

\begin{figure}
	\centering
	$\alpha=-1$ \hspace{0.4\linewidth} $\alpha=0$\\[-1.6em]
	\includegraphics[width=0.45\linewidth]{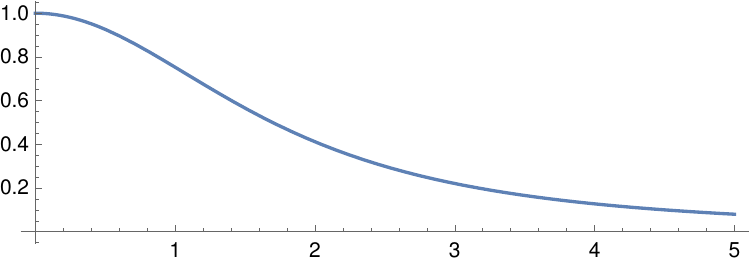}
	\includegraphics[width=0.45\linewidth]{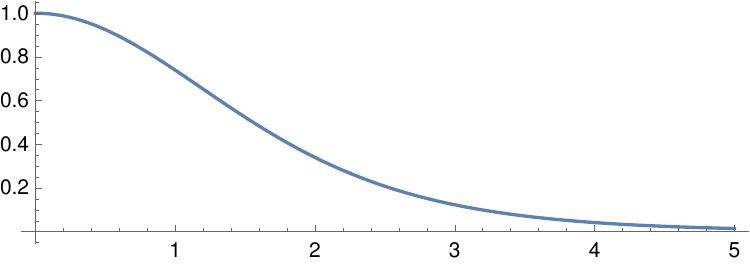}
	\\[1em]
	$\alpha=0.5$ \hspace{0.4\linewidth} $\alpha=1$\\[-1.6em]
	\includegraphics[width=0.45\linewidth]{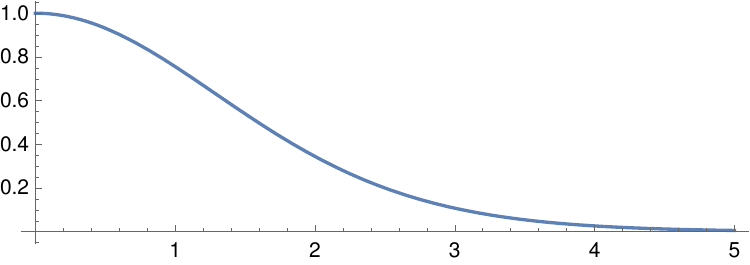}
	\includegraphics[width=0.45\linewidth]{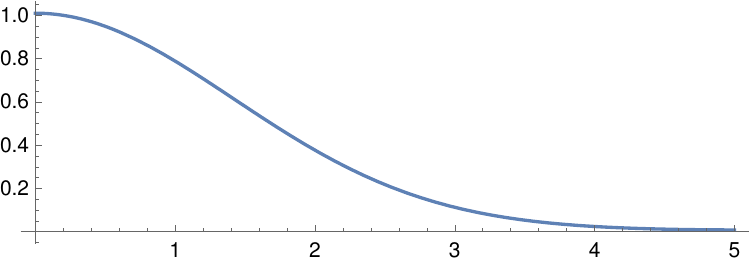}
	\\[1em]
	$\alpha=2$ \hspace{0.4\linewidth} $\alpha=4$\\[-1.6em]
	\includegraphics[width=0.45\linewidth]{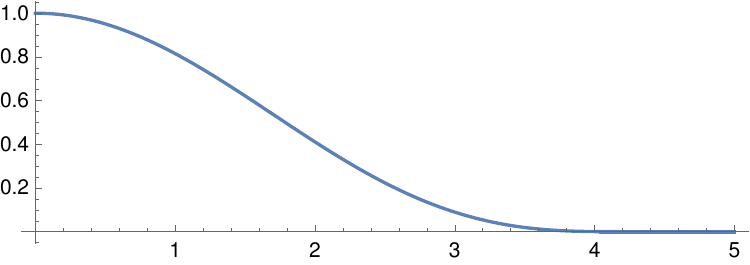}
	\includegraphics[width=0.45\linewidth]{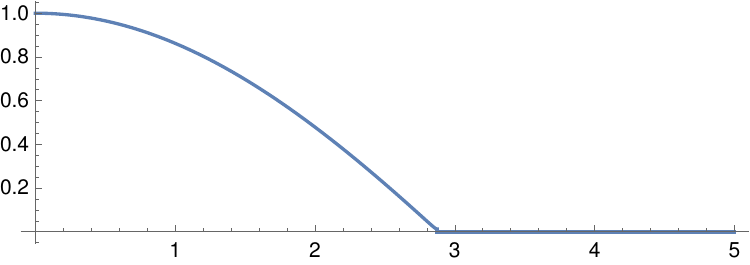}
	\\[1em]
	$\alpha=5$ \hspace{0.4\linewidth} $\alpha=7$\\[-1.6em]
	\includegraphics[width=0.45\linewidth]{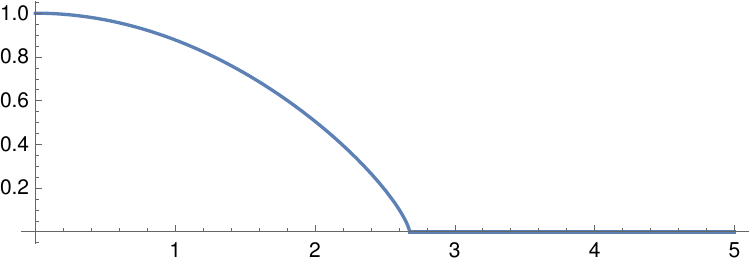}
	\includegraphics[width=0.45\linewidth]{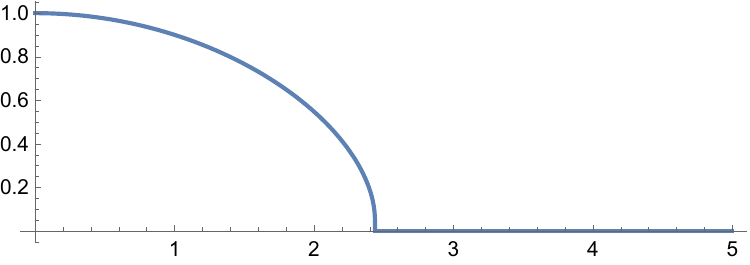}
	
	\caption{Numerically obtained similarity profiles $\Phi_\alpha$ for $\alpha\in \{-1,0,0.5,1,
         {2,4,5,7 } \}$ normalized by $\Phi_\alpha(0)=1$. 
		For $\alpha<1$ the solutions have algebraic decay like $|y|^{-3/(1{-}\alpha)}$; for $\alpha=1$ 
        we have $\Phi_1(y)=\mathrm e^{-y^2/4}$, for $\alpha>1$ the solutions have compact support and 
        behave like $(y_\alpha{-}y)^{3/(\alpha-1)}$ to the left of the right boundary of the support 
        $[-y_\alpha,y_\alpha]$. }
	\label{fig:SelfSimi}
\end{figure}

For comparison we also provide numerical experiments for the equation~\eqref{eq:RRE}, which provides already a spatial discrete approximation to~\eqref{eq:DLSS:alpha} once the activity $\sigma_\alpha$ is specified.
We use the specific choice
\begin{equation*}%
	\sigma_{\alpha}(c_{k-1},c_{k},c_{k+1})=\frac{4}{\alpha^2} \bra[\bigg]{ \frac{c_k^{\frac{\alpha}{2}}- \sqrt{c_{k-1}c_{k+1}}^{\frac{\alpha}{2}}}{ c_k - \sqrt{c_{k-1} c_{k+1}}}}^{\!2} \,,
\end{equation*}
which satisfies Assumption~\ref{ass:Activity}{; this can be verified by noting that it coincides with the choice $\sigmaAver_\alpha$ from Lemma~\ref{lem:activ}(2) after a straightforward algebraic simplification}.
We use an implicit Euler scheme for the time integration, which is solved using a 
Newton method.  The resulting scheme is implemented in the Julia 
language~\cite{Julia-2017} and the obtained solutions are depicted in 
Figure~\ref{fig:NumSol}. 
\begin{figure}
	\includegraphics[width=0.4\linewidth]{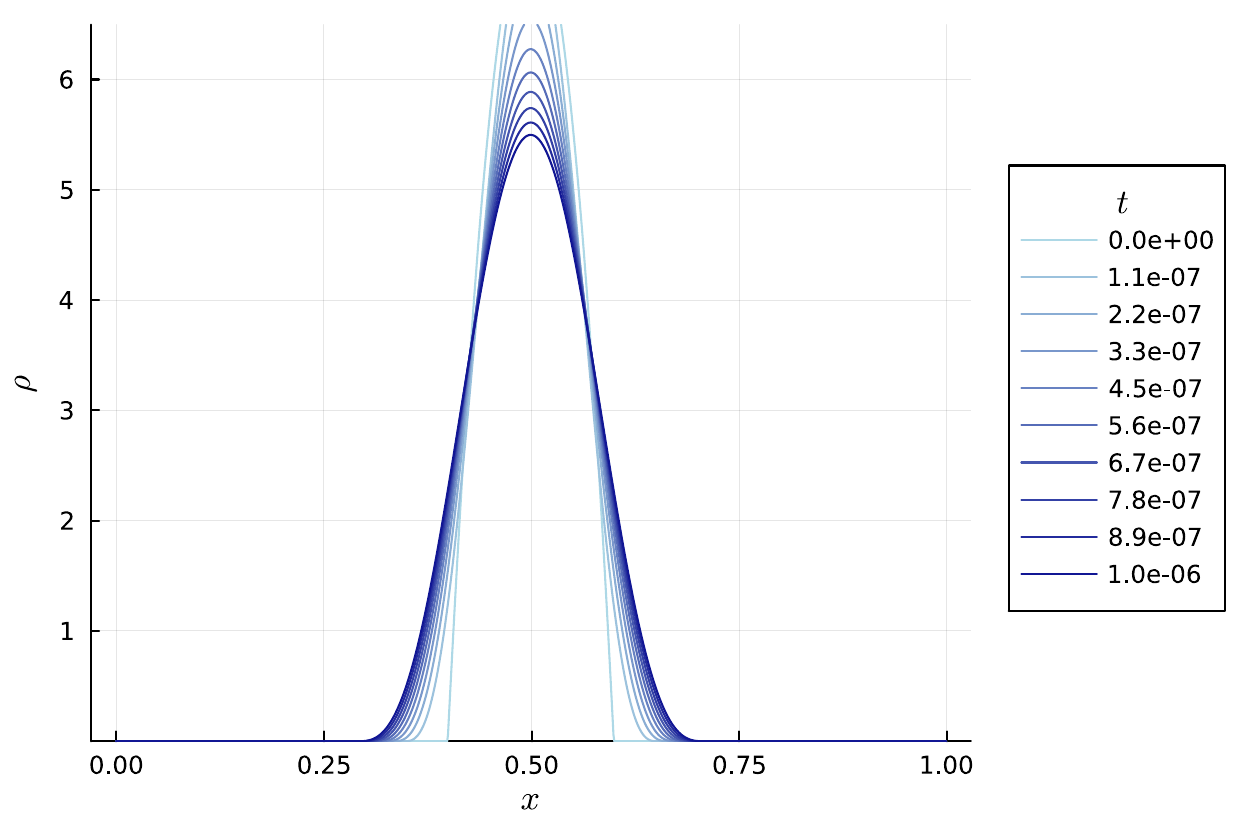}
    \hspace{-2cm}
	\includegraphics[width=0.4\linewidth]{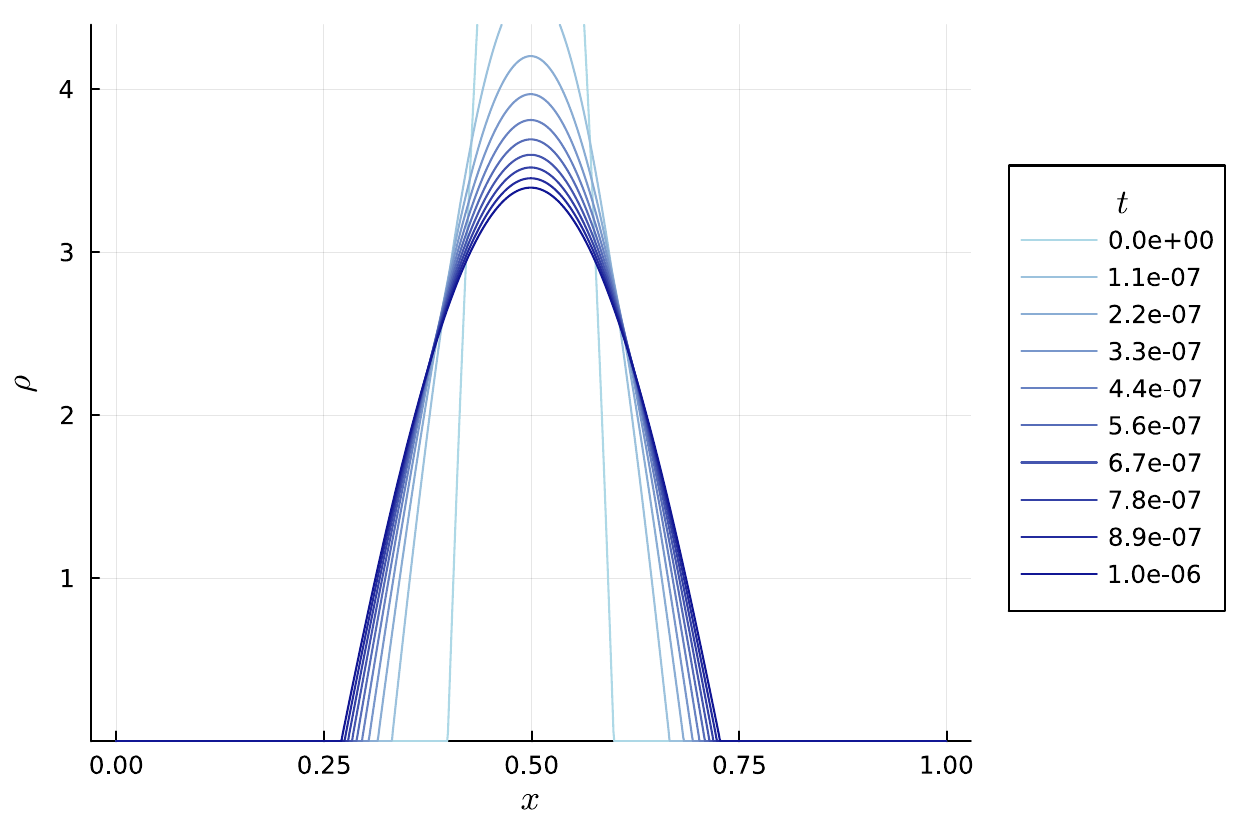}
    \hspace{-2cm}
	\includegraphics[width=0.4\linewidth]{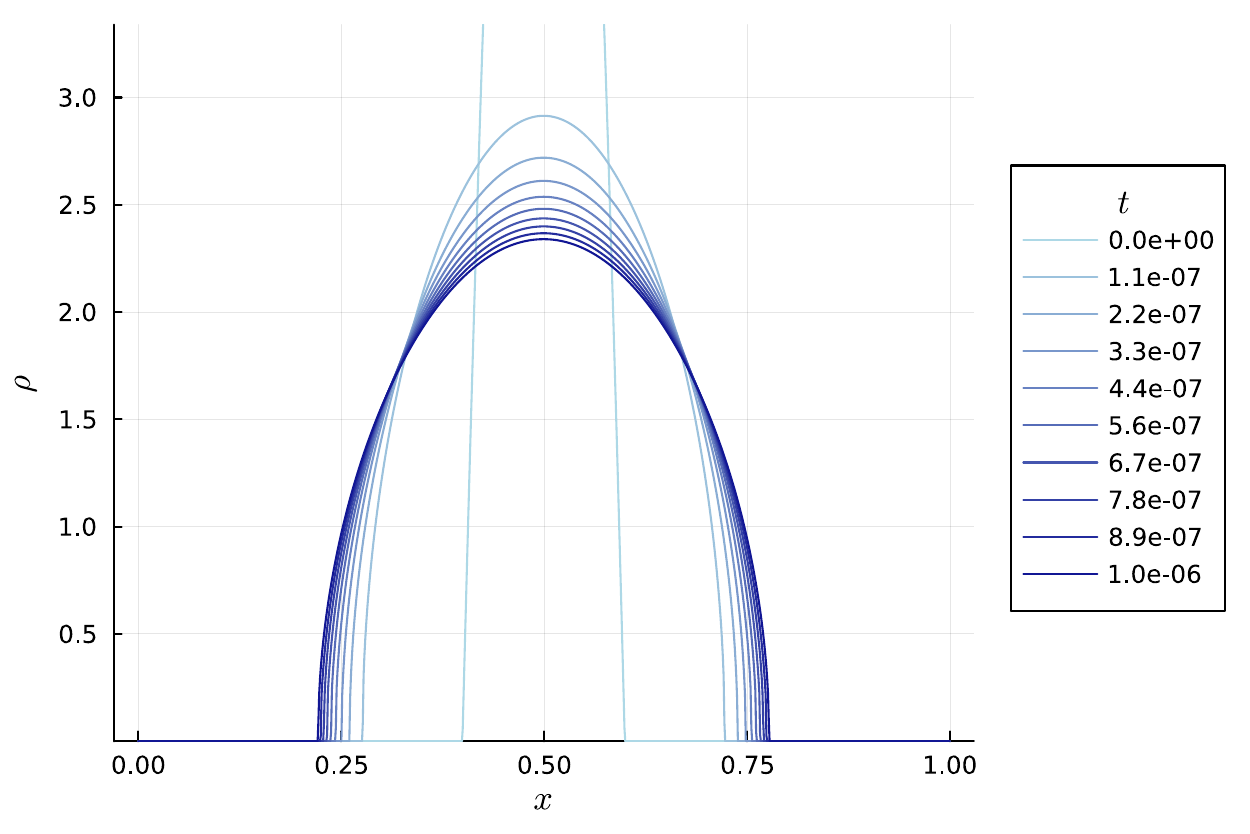}\\
    \includegraphics[width=0.40\linewidth]{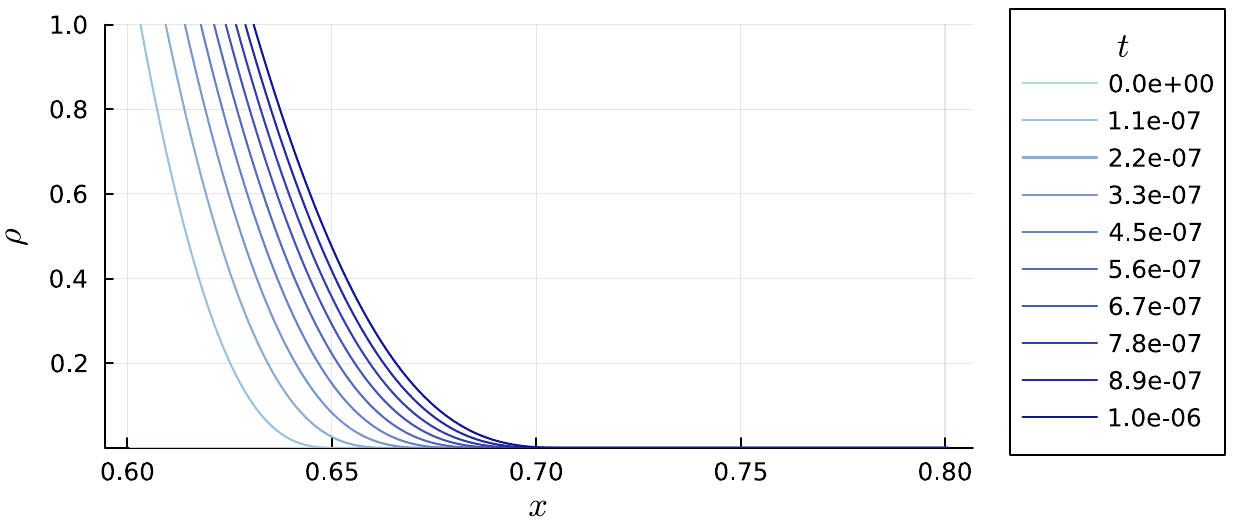}
    \hspace{-2cm}
    \includegraphics[width=0.40\linewidth]{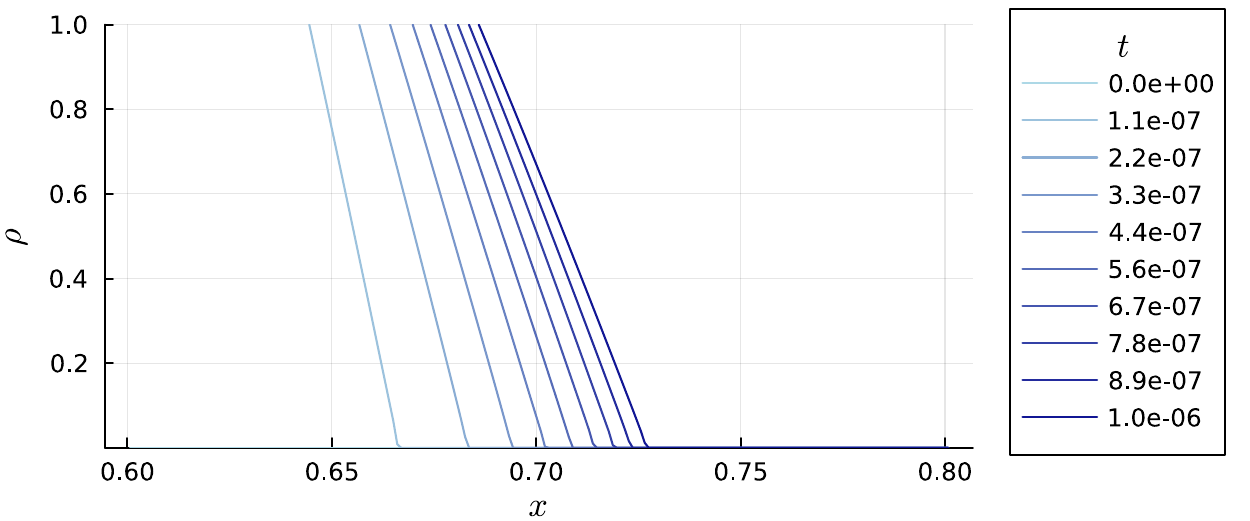}
    \hspace{-2cm}
    \includegraphics[width=0.40\linewidth]{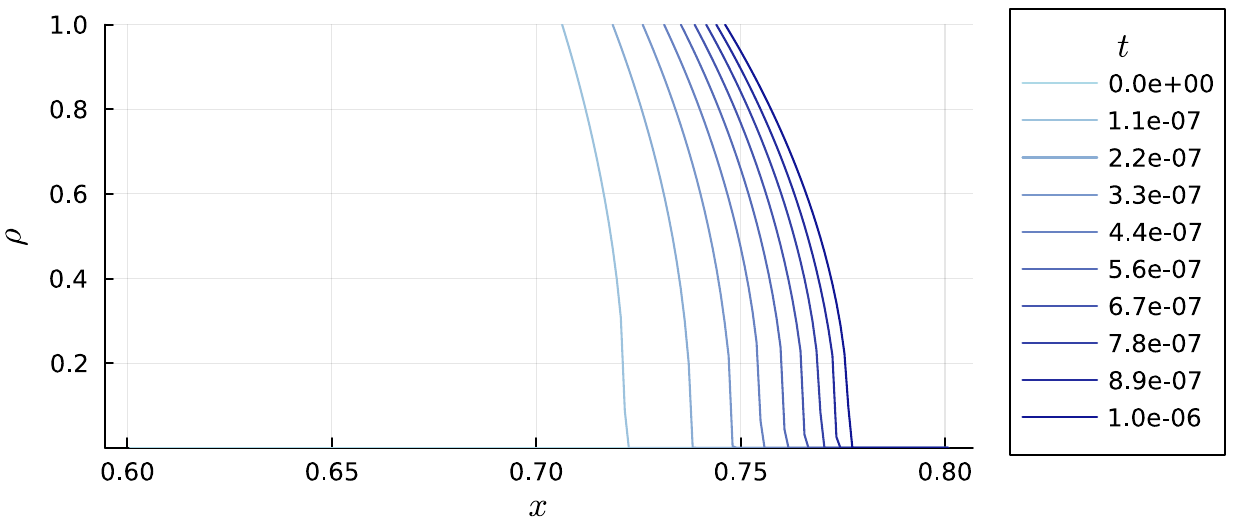}
\caption{Numerically obtained solutions to~\eqref{eq:RRE} for $\alpha\in \{2,4,7\}$ (from left to right).
Starting from a discrete bump function $c^0_k = \max\set[\big]{ 0 ,1-((N/2-k)/(\ell\,N))^2}$ with 
$\ell=0.1$ and $N=2^{10}$.  {The first row shows} the overall evolution and propagation of fronts, 
whereas the second  {row contains} zooms towards the tip of the support. 
We emphasize that in accordance to our result about positivity of solutions to the discrete system, 
the obtained numerical  {solutions} are also positive.}
\label{fig:NumSol}
    
\end{figure}

\section{The discrete model}\label{sec:Discrete}

\subsection{Discrete spaces and operators}

Let $N\in\N$ be fixed in this section. On $\L^{1}(\mathbb{T}_{N})\simeq \R^N$ we consider the canonical scalar product
\begin{equation}\label{eq:def:product:XN}
\langle v,\psi\rangle_{N}=\frac{1}{N}\sum_{k=1}^Nv_{k}\psi_{k}.
\end{equation}
We write $f_+,f_-:[N]\to\R$ for the left/right translates of $f:[N]\to \R$ given by $(f_{\pm})_k = f_{k\pm 1}$ for all $k\in [N]$.
We also have the forward and backward differential operators as well as discrete Laplace operator for such $f:[N]\to \R$ defined by
\begin{equation*}%
	\partial_+^N f = N(f_+ - f) \,,
	\quad 
	\partial_-^N f = N(f - f_-) \,
	\quad 
	 \quad\text{and}\quad
	 \Delta^N f = N^2( f_- - 2 f + f_+ ) \,.
\end{equation*}
We also note, that $\Delta^N = \partial_+^N \partial_-^N =\partial_-^N \partial_+^N$. The operator $\Delta^N$ is a symmetric linear operator with respect to the product~\eqref{eq:def:product:XN}, that is the integration by parts formulas hold
\begin{equation}\label{eq:def:IntByParts}
	\skp{\Delta^N \varphi,\psi}_N= 
	\skp{\partial_+^N \partial_-^N \varphi,\psi}_N=
	-\skp{ \partial_-^N \varphi,\partial_-^N\psi}_N=
	\skp{ \varphi,\partial_+^N \partial_-^N\psi}_N=
	\skp{\varphi,\Delta^N \psi}_N \,.
\end{equation}
Next, we introduce the set of probability densities on the discrete torus by
\begin{equation*}%
	\Prob^N = \set[\Big]{ c\in \L^1(\mathbb{T}_N)\ :\  \forall\, k\in [N]:\ c_k \geq 0,
    \ \ N^{-1} \sum_{k=1}^N c_k = 1 } \subset \L^1(\mathbb{T}_N) \,,
\end{equation*}
as well as the subset of positive probability densities by
\begin{equation*}%
	\Prob^N_{>0} = \set[\big]{ c\in \Prob^N:  c_k > 0 \ \forall\, k\in [N]} \,.
\end{equation*}
For later reference, we define the variational derivative of a functional $E_N: 
\Prob^N\to \R$ at $c^* \in \Prob^N$ as the dual function $E_N'(c^*)$ with respect 
to the product~\eqref{eq:def:product:XN}; more explicitly
\begin{equation}\label{eq:def:VarDeriv}
	(E'_N(c^*))_k = \Bigl.\frac{\partial E_N(c)}{\partial c_k}\Bigr|_{c=c^*} \,.
\end{equation}

\subsection{Well-posedness of the ODE system}

We specify the assumptions on the activity function $\sigma_\alpha$ that describes 
the jump rates. As it turns out, we have some flexibility for the choice, which 
ensures both well-posedness and the convergence to the limit 
system~\eqref{eq:DLSS:alpha}. 

 {%
We first fix the structural \emph{shape} of an admissible activity, deferring the quantitative bounds to Assumption~\ref{ass:Activity}.
\begin{definition}[Activity function]\label{def:activity}
  An \emph{activity function} of mobility exponent $\alpha>0$ is a map
  $\sigma_\alpha:{[0,\infty[}^{3}\to[0,\infty[$ that
  \begin{enumerate}[(i)]
    \item is symmetric in its first and third argument, that is
          $\sigma_\alpha(a,c,b)=\sigma_\alpha(b,c,a)$ for all $a,b,c\geq0$;
    \item depends on the first and third argument only through their geometric
          mean, that is, there exists a function
          $\sigmaAver_\alpha:{[0,\infty[}^{2}\to[0,\infty[$ such that
          \begin{equation}\label{eq:def:activity:reduced}
            \sigma_\alpha(c_{k-1},c_k,c_{k+1})
              =\sigmaAver_\alpha\bigl(c_k,\sqrt{c_{k-1}c_{k+1}}\bigr)\,;
          \end{equation}
    \item is homogeneous of degree $\alpha-2$, that is
          $\sigma_\alpha(\lambda a,\lambda c,\lambda b)=\lambda^{\alpha-2}\,\sigma_\alpha(a,c,b)$
          for all $\lambda>0$ and $a,b,c\geq0$.
  \end{enumerate}
\end{definition}
The homogeneity~(iii) is what makes the discrete rate equation~\eqref{eq:RRE} a consistent 
approximation of~\eqref{eq:DLSS:alpha}, reproducing the degree-$\alpha$ scaling of the limiting 
flux $\rho^\alpha\partial_{xx}\log\rho$; in particular, the flux function $\bar\jmath_\alpha$ 
defined below is homogeneous of degree~$\alpha$. The following assumption selects, among all 
activity functions, the admissible subclass by imposing quantitative bounds that guarantee 
well-posedness of the discrete system and convergence to~\eqref{eq:DLSS:alpha}.}
 {A prototypical example for $\alpha=1$ is the activity function $\sigmaAver_1(x,y)=\frac{2}{x+y}$
from~\cite{MRSS}, which is the inverse of the arithmetic mean of $x$ and $y$. The following conditions 
are designed to include this and related choices for $\alpha>0$.}

\begin{assumption}\label{ass:Activity} For $\alpha>0$ the
\emph{activity} $\sigma_\alpha = \sigma_\alpha(c_{k-1},c_k,c_{k+1}) =:
\sigmaAver_\alpha(c_k,\sqrt{c_{k-1}c_{k+1}})$  {(in the sense of Definition~\ref{def:activity})} satisfies:
	\begin{enumerate}[ ({A}1) ]
	\item \label{ass:item:sigma-Aver:continuous}
	  The function $\bar\jmath$ defined via $]0,\infty[^2 \,\ni (x,y)\mapsto \bar\sigma_{\alpha}(x,y) (x^2{-}y^2)=: \bar\jmath_{\alpha}(x,y)$ has 
      a continuous extension to ${[0,\infty[}^2$.
	\item \label{ass:item:sigma-Aver:definit}
			The extended function $\bar\jmath_{\alpha}: {[0,\infty[}^2\to \R$ 
            satisfies $\bar\jmath_\alpha(x,0)>0$ for all $x >0$.
	\item For all $x,y> 0$ the following bounds hold:
	\begin{equation}\label{eq:def:sigma-Aver}
		\max\{x^\alpha,y^\alpha\}\geq  \sigmaAver_\alpha(x,y) \ x y\quad 
        \text{and} \quad   \sigmaAver_\alpha(x,y) \ \geq \ 
		\biggl(\frac{ (x^\alpha-y^\alpha)}{\alpha(x-y)}\biggr)^{\!2} 
        \min\{1/x^{\alpha},1/y^{\alpha}\}.
	\end{equation}
	\end{enumerate}
\end{assumption}

 {A key feature of Assumption~\ref{ass:Activity} is its \emph{universality}: as shown in 
Theorem~\ref{thm:ConvergenceResult}, any choice of activity function $\sigma_\alpha$ satisfying 
these conditions leads to the same macroscopic limit equation~\eqref{eq:DLSS:alpha}, independently 
of the specific choice. For this, note also that the upper and lower bounds in 
\eqref{eq:def:sigma-Aver} imply  $\sigmaAver_\alpha (x,x)=x^{\alpha-2}$.}

It will be beneficial to rewrite the bound~\eqref{eq:def:sigma-Aver} in terms of the
Stolarsky mean, which is a generalization of the logarithmic mean, to make use of 
many of its properties, see~\cite{Stolarsky1975}. 
\begin{definition}[Stolarsky mean]
	For $p\in \R$, the
	\emph{Stolarsky mean} $\s_{p}:[0,\infty)\times [0,\infty) \to [0,\infty)$ is given by
	\begin{equation*}%
		\s_{p}(x,y)= \Bigl(\frac{x^{p}-y^{p}}{p\,(x{-}y)}\Bigr)^{1/(p-1)} \,,
	\end{equation*}
	with the cases $p=0,1$ and $\s_p(x,x)=x$ understood as a limit. 
\end{definition}
Hence, we can equivalently rewrite the lower bound~\eqref{eq:def:sigma-Aver} in terms of the Stolarsky mean as 
\begin{equation*}%
\sigmaAver_\alpha(x,y)\quad\geq\quad  \s_{\alpha}(x,y)^{2\alpha-2} \min\{1/x^{\alpha},1/y^{\alpha}\}.
\end{equation*}
In particular, we observe that the activity $\sigma_\alpha$ is thus 
$(\alpha{-}2)$-homogeneous and non-negative. 

We also define the so-called \emph{mobility function} by
\begin{equation}\label{eq:def:m-alpha}
	m_{\alpha}(c_{k-1},c_{k},c_{k+1}) = \mAver_\alpha(c_k, \sqrt{c_{k-1}c_{k+1}}):=\sigmaAver_{\alpha}(c_k, \sqrt{c_{k-1}c_{k+1}})\cdot\bigl(c_{k}\sqrt{c_{k-1}c_{k+1}}\bigr),
\end{equation}
we observe that it satisfies thanks to Assumption~\ref{ass:Activity} the inequality
\begin{align}\label{eq:MobilityInequality}
\forall\, x,y\in {[0,\infty[}:\quad \max\{x^\alpha,y^\alpha\} \ \geq \ \mAver_\alpha(x,y) \ \geq \  \s_{\alpha}(x,y)^{2\alpha-2} \, x y\, \min\{1/x^{\alpha},1/y^{\alpha}\}.    
\end{align}

\begin{remark}[Discussion on the assumptions and examples]
The continuity in Assumption \ref{ass:Activity} is used in the proof of existence 
of the ODE system~\eqref{eq:RRE}. The upper bound on $\sigmaAver_\alpha$ provides 
that the mobility is of order $\alpha$ and is crucial for compactness to pass to 
the limit. The lower bound is needed for the proof of a liminf-estimate for the 
slope in the gradient-flow formulation.

Some important cases are $\alpha=1$ and $\alpha=2$. For $\alpha=1$, the classical 
DLSS equation, we observe that $\overline\sigma_\alpha$ has to satisfy 
\[
    \max\{1/ x, 1 / y\}\quad\geq\quad \overline\sigma_\alpha(x,y) \quad\geq\quad \min\{1 /  x,  1 / y\}.
\]
In particular, the scheme introduced in~\cite{MRSS} with the activity function
$\frac{2}{x+y}$ is included in our analysis.
For $\alpha=2$, we have the inequality
\[
\forall\, x,y\in[0,\infty[:\quad \frac{\max\{x^2,y^2\}}{x y}\quad\geq\quad 
\sigmaAver_2(x,y) \quad\geq\quad   \frac{(x+y)^2}{4\max\{x^2,y^2\}},
\]
where we have used that $\s_2(x,y)=\frac{x+y}{2}$ is the arithmetic mean. 
In particular, $\sigmaAver_2=1$, corresponding to the mass-action law for 
chemical reactions, is admissible.
\end{remark}

The next lemma shows, that there is indeed for all $\alpha>0$ an activity function 
$\sigma_\alpha$ that satisfies Assumptions~\ref{ass:Activity}.

\begin{lemma}[Existence of activity $\sigma_\alpha$]\label{lem:activ}~
\begin{enumerate}
 \item
	\label{lem:StolarskyInequality}
	For all $\alpha\geq 0$, $x,y\geq0$ we have the inequality $
	\s_{\alpha}(x,y)^{\alpha-1}\sqrt{xy}\, \leq \, \frac{x^{\alpha}+y^{\alpha}}{2} \leq \max\{x^\alpha,y^\alpha\}$.
\item The choices $\overline \sigma_\alpha(x,y)= \s_{\alpha}(x,y)^{2\alpha-2} 
\frac{2}{x^\alpha + y^\alpha}$ satisfy Assumptions~\ref{ass:Activity}.
\end{enumerate}
\end{lemma}
\begin{proof}
	We prove the inequality of the first part by distinguishing two cases $\alpha\leq 1$ and $\alpha\geq 1$.
	
	First, let $\alpha\in]0,1]$. Because the Stolarsky means $\s_p$ are monotone decreasing in their parameter $p\in \R$ and $\s_{-1}$ is the geometric mean~\cite{Stolarsky1975}, we have that 
	\[
	\s_{\alpha}(x,y)\geq \s_{-1}(x,y) = \sqrt{xy} \quad \Rightarrow \quad \s_{\alpha}(x,y)^{\alpha-1}\leq \sqrt{xy}^{\alpha-1}.
	\]
	Multiplying with $\sqrt{xy}$, we get with the AM-GM inequality the claim.
	
	Now, let $\alpha\geq 1$. Observe that the generalized Stolarsky mean\footnote{defined for $\alpha,\beta\in \R$ as $\s_{\alpha,\beta}(x,y) = \bra[\Big]{\frac{\beta(x-y)^\alpha}{\alpha(x-y)^\beta}}^{\frac{1}{\alpha-\beta}}$ with the undefined cases understood as limits.} $\s_{r,2r}$ \cite[(10)]{Stolarsky1975} is just the power mean $\bigl(\frac{x^r+y^r}{2}\bigr)^{1/r}$. 
	Again, by using the monotonicity in both parameters of the generalized Stolarsky mean~\cite[Theorem p.~89]{Stolarsky1975} and using that $\alpha\geq 1$, we have
	\[
	\s_{\alpha}(x,y)=\s_{\alpha,1}(x,y)\leq \s_{\alpha,2\alpha}(x,y)  \quad \Rightarrow \quad \s_{\alpha}(x,y)^{\alpha-1}\leq \s_{\alpha,2\alpha}(x,y)^{\alpha-1} = \bigl(\frac{x^\alpha+y^\alpha}{2}\bigr)^{\frac{\alpha-1}{\alpha}} \,.
	\]
	Multiplying with $\sqrt{xy}$ and using that the power means are monotone, i.e. $\sqrt{xy}\leq \bigl(\frac{x^\alpha+y^\alpha}{2}\bigr)^{1/\alpha}$ for $\alpha\geq 1$ we get
	\[
	\s_{\alpha}(x,y)^{\alpha-1} \sqrt{xy} \leq  \Bigl(\frac{x^\alpha+y^\alpha}{2}\Bigr)^{\frac{\alpha-1}{\alpha}} \Bigl(\frac{x^\alpha+y^\alpha}{2}\Bigr)^{\frac{1}{\alpha}} = \frac{x^\alpha+y^\alpha}{2} \,.
	\]
	Finally, we observe that by continuity the inequality also holds for $\alpha=0$: both sides of the inequality are continuous in $\alpha$ at $\alpha=0$, and the inequality is strict for all $\alpha>0$ and $x\ne y$, so the limiting case $\alpha=0$ follows by taking $\alpha\downarrow0$.

    By the first part it follows immediately, that $\overline\sigma_\alpha$ satisfies \eqref{eq:def:sigma-Aver}. Hence, it suffices to show the properties of $\overline\jmath_\alpha$, where we now have 
    $\overline\jmath_\alpha(x,y) = \s_{\alpha}(x,y)^{2\alpha-2} \frac{2}{x^\alpha + y^\alpha} (x^2-y^2)$ for $x,y>0$. The only singular limits are for $x=y$, $x=0$, or $y=0$, where the latter can be excluded by symmetry. For $y=0$ and $x>0$, we easily compute the continuous extension $\overline\jmath_\alpha(x,0) = \frac{2}{\alpha^2} x^\alpha$. For $x=y$, we use the monotonicity of the Stolarsky mean, which provides $\min\{x,y\}\leq \s_\alpha(x,y) \leq \max\{x,y\}$. Hence, for both cases $\alpha\geq 1$ and $\alpha<1$, we compute the extension $\overline\jmath_\alpha(x,x)=0$. This shows, that $\overline\jmath_\alpha$ can be extended continuously on the whole $[0,\infty)^2$. The definiteness in Assumption~\ref{ass:item:sigma-Aver:definit} is also clear.
\end{proof}
Our first result regards the well-posedness of~\eqref{eq:RRE} under Assumption~\ref{ass:Activity}.
\begin{proposition}[Well-posedness of~\eqref{eq:RRE}]\label{prop:WellposednessODE}
		For $\alpha>0$, let $\sigma_\alpha$ satisfy Assumption \ref{ass:Activity}. Then for all $c^0\in \Prob^N$ exists a differentiable global solution $c:[0,\infty)\to \Prob^N$ solving~\eqref{eq:RRE} such that $c(t)\in \Prob^N_{>0}$ for all $t>0$.
		For those solutions the discrete entropy~\eqref{eq:def:energy} is a Lyapunov function.
\end{proposition}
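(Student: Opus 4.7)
The plan is the standard three-step ODE strategy---local existence, forward invariance of the simplex, and global extension---where the strict positivity for $t > 0$ requires a separate \emph{spreading-of-positivity} argument, which I expect to be the main obstacle.

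\emph{Local existence and invariance of $\Prob^N$.} By Assumption~\ref{ass:item:sigma-Aver:continuous}, the flux $J_{\alpha,k}[c] = N^2 \bar\jmath_\alpha(c_k, \sqrt{c_{k-1} c_{k+1}})$ extends continuously from $\Prob^N_{>0}$ to all of $\Prob^N$, so the right-hand side of~\eqref{eq:RRE} is continuous and Peano's theorem yields a local $C^1$ solution. Summing~\eqref{eq:RRE:CEN} over $k$ gives mass conservation $\sum_k \dot c_k = 0$ via the discrete divergence structure. Forward invariance of the simplex is obtained through the Nagumo tangential condition: at $c \in \Prob^N$ with $c_k = 0$, the products $c_{k-2} c_k$ and $c_k c_{k+2}$ vanish, so $J_{\alpha, k \pm 1} = \sigma_\alpha N^2 c_{k \pm 1}^2 \geq 0$, while $J_{\alpha, k} = -\sigma_\alpha N^2 c_{k-1} c_{k+1} \leq 0$, whence $\dot c_k = N^2(J_{\alpha, k-1} - 2 J_{\alpha, k} + J_{\alpha, k+1}) \geq 0$. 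Compactness of $\Prob^N$ then rules out finite-time blow-up and delivers a global solution $c \colon [0, \infty) \to \Prob^N$.

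\emph{Strict positivity for $t > 0$.} This is the main technical point. Since $c^0 \in \Prob^N$ carries mass $N$, some $k_0$ satisfies $c_{k_0}^0 > 0$. The key input is Assumption~\ref{ass:item:sigma-Aver:definit}: whenever $c_j > 0$ and $c_{j \pm 1} = 0$, one has $J_{\alpha, j} = N^2 \bar\jmath_\alpha(c_j, 0) > 0$, which contributes strictly positively to $\dot c_{j \pm 1}$ via the adjacent ``gain'' terms of the discrete Laplacian. I plan to run an induction on the graph distance $d(k, \mathrm{supp}(c^0))$ on the cycle $\Z/N\Z$, propagating a quantitative lower bound of the form $c_k(t) \geq A_k\, t^{n_k}$ on some interval $(0, \delta)$, with exponents defined recursively by $n_{k+1} = \alpha n_k + 1$ (the factor $\alpha$ coming from the $\alpha$-homogeneity of $\bar\jmath_\alpha$ and one integration in time). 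Once the solution enters the open interior $\Prob^N_{>0}$, smoothness of the RHS there combined with the invariance from the previous step forbids a return to $\partial \Prob^N$, yielding $c(t) \in \Prob^N_{>0}$ for all $t > 0$.

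\emph{Entropy as a Lyapunov function.} On $\Prob^N_{>0}$ the variational derivative is $(E_N'(c))_k = \log c_k$, and the chain rule together with the continuity equation~\eqref{eq:RRE:CEN} and the self-adjointness~\eqref{eq:def:IntByParts} gives
\begin{equation*}
\frac{\dn}{\dn t} E_N(c(t)) = \la \log c, \, \Delta^N J_\alpha[c] \ra_N = \la \Delta^N \log c, \, J_\alpha[c] \ra_N.
\end{equation*}
Setting $x_k := N^{-2}(\Delta^N \log c)_k = \log(c_{k-1} c_{k+1}/c_k^2)$ as in~\eqref{eq:flux:rewrite:expand} gives $J_{\alpha, k} = -\sigma_\alpha N^2 c_k^2 (e^{x_k} - 1)$, so the summand becomes $-N^4 \sigma_\alpha c_k^2 \cdot x_k (e^{x_k} - 1) \leq 0$ thanks to the elementary inequality $t(e^t - 1) \geq 0$ for all $t \in \R$. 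Hence $\frac{\dn}{\dn t} E_N(c(t)) \leq 0$ for $t > 0$, and continuity of $E_N$ on $\Prob^N$ propagates the Lyapunov property to $t = 0$.
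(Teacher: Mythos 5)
Your local existence (Peano via the continuous extension $\bar\jmath_\alpha$ from (A\ref{ass:item:sigma-Aver:continuous})), mass conservation, the sign analysis of $J_{\alpha,k-1},J_{\alpha,k},J_{\alpha,k+1}$ on the facet $\{c_k=0\}$, and the entropy computation all match the paper's proof. The gap is in the strict positivity for $t>0$, which you correctly identify as the main point but do not actually prove. Two concrete problems. First, your spreading induction with $c_k(t)\geq A_k t^{n_k}$ and $n_{k+1}=\alpha n_k+1$ only accounts for the gain term $N^2 J_{\alpha,k}$ in $\dot c_{k+1}=N^2(J_{\alpha,k}-2J_{\alpha,k+1}+J_{\alpha,k+2})$; the other two terms can be \emph{negative}, e.g.\ $J_{\alpha,k+2}=N^2\bar\jmath_\alpha(c_{k+2},\sqrt{c_{k+1}c_{k+3}})<0$ when $c_{k+2}^2<c_{k+1}c_{k+3}$, contributing a loss of order $(c_{k+1}c_{k+3})^{\alpha/2}\lesssim c_{k+1}^{\alpha/2}\sim t^{\alpha n_{k+1}/2}$. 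Near $t=0$ this dominates the gain $t^{\alpha n_k}$ as soon as $n_{k+1}<2n_k$, i.e.\ $n_k(2-\alpha)>1$, which happens for every $\alpha<2$ once the graph distance is large (for $\alpha=1$ already at distance $2$). So the comparison argument does not close as sketched, and fixing it would require additional upper bounds and case distinctions you have not supplied. Second, your final claim that ``invariance from the previous step forbids a return to $\partial\Prob^N$'' is not a valid deduction: weak invariance of the closed simplex ($\dot c_k\geq 0$ on $\{c_k=0\}$) does not prevent an interior trajectory from reaching the boundary in finite time (compare $\dot u=-\sqrt{u}$, which satisfies $\dot u=0$ at $u=0$ yet hits zero). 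What is needed is the \emph{strict} inequality at a hypothetical first touching time.

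The paper avoids both issues with a single soft argument: at a first time $\tau$ where some $c_k(\tau)=0$, mass conservation guarantees a strictly positive neighbor, say $c_{k+1}(\tau)>0$; then $J_{\alpha,k+1}=N^2\bar\jmath_\alpha(c_{k+1},0)>0$ by (A\ref{ass:item:sigma-Aver:definit}), while $J_{\alpha,k}\leq 0$ and $J_{\alpha,k-1}\geq 0$, so $\dot c_k(\tau)>0$ — contradicting $\dot c_k(\tau)\leq 0$. The same contradiction, run at $t=0$ for a limit of solutions with positive approximating data, yields the instantaneous generation of positivity without any quantitative rates. I would recommend replacing your induction by this first-touching-time argument; if you do want quantitative lower bounds, they are a genuinely harder (and here unnecessary) statement.
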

\begin{proof}
	\noindent\underline{\emph{Step 1. Local existence of positive solutions}:} We start with a positive initial datum $c^0\in \Prob^N_{>0}$. By Assumption~\ref{ass:Activity}, the diffusive flux 
	\[
		J_{\alpha,k}[c]=\sigma_\alpha(c_{k-1},c_k,c_{k+1})(c_k^2 - c_{k-1}c_{k+1}) = \bar\sigma_\alpha(c_k,\sqrt{c_{k-1}c_{k+1}}) (c_k^2 - c_{k-1}c_{k+1})
	\]
	in~\eqref{eq:RRE} is continuous for $c\in \Prob_{>0}^N$ and hence we obtain for some $\tau>0$ a local solution $c: [0,\tau)\to \Prob^N_{>0}$ by the Peano existence theorem. We note that any such solution is mass conserving due to the presence of the discrete Laplacian in~\eqref{eq:RRE}. Hence, the constructed solution is also bounded by $N$. 
	
	\smallskip
	\noindent
	\underline{\emph{Step 2. Preservation of positivity}:}
	We now consider a local maximal positive solution on $[0,\tau)$. Since, it is bounded, we find a sequence $t_n \to \tau$ such that the limit $\lim_{n\to \infty} c(t_n)=:c(\tau)\in \Prob^N$ exists. Suppose that $c(\tau)\in\Prob^N\setminus \Prob^N_{>0}$, then we find some $k\in [N]$ such that 
	\[
		c_k(\tau) = 0  , \quad \dot c_k(\tau) \leq 0 \quad \text{and}\quad c_{k+1}(\tau) > 0 \,
	\] 
	where we used the mass conservation and the continuity of the fluxes from Assumption~\ref{ass:Activity}~(A\ref{ass:item:sigma-Aver:continuous}).
	From the form of the flux and Assumption~\ref{ass:Activity}~(A\ref{ass:item:sigma-Aver:definit}), we conclude
	\[
		J_{\alpha,k}[c] \leq 0 ,\quad J_{\alpha,k-1}[c] \geq 0 \quad\text{and}\quad J_{\alpha,k+1}[c] > 0 \,.
	\]
	Hence, from the discrete Laplacian in~\eqref{eq:RRE}, we conclude
	\[
		\dot c_k = N^2 \bra[\big]{ J_{\alpha,k-1}[c] - 2J_{\alpha,k}[c]+ J_{\alpha,k+1}[c]} > 0 \,
	\]
	which provides a contradiction to $\dot c_k\leq 0$. Hence, we get $c(\tau)\in \Prob^N_{>0}$ and the constructed solutions exists globally in time on $[0,\infty)$.
	
	\smallskip
	\noindent
	\underline{\emph{Step 3. Extension to $\Prob^N$ and generation of positivity}:}
	For $c^0\in \Prob^N$, we consider a sequence $c^{0,n}\in \Prob^N_{>0}$ such that $c^{0,n}\to c^0$. Consider the according solutions $c^n: [0,\infty)\to \Prob^N_{>0}$. 
	Again by the continuity of the diffusive flux from Assumption~\ref{ass:Activity}~(A\ref{ass:item:sigma-Aver:continuous}), we find a pointwise limit $c(t) := \lim_{n\to \infty} c^n(t)$ starting from $c^0$, which is differentiable satisfying~\eqref{eq:RRE}. 
	Suppose $c(t)\notin \Prob^N_{>0}$ for $t>0$, then continuity implies that there exists $k\in [N]$ such that $c_k(0)=0=\dot c_k(0)$ and without loss of generality $c_{k+1}(0)>0$, which by the same argument as in Step 2 provides a contradiction. 
	
	\smallskip
	\noindent
	\underline{\emph{Step 4. Lyapunov functional}:}
	By the positivity and differentiability of the constructed solutions, we can calculate for all $t>0$ using the integration by parts~\eqref{eq:def:IntByParts} and exponential rewriting of the diffusive flux~\eqref{eq:flux:rewrite:expand}
	\begin{align*}
		&\frac{\dx{}}{\dx{t}} E_{N}(c(t)) = \frac{1}{N} \sum_{k\in [N]} \log c_k(t)  \Delta^N J_{\alpha,k}(c(t)) \\
		&= - \frac{1}{N}  \sum_{k\in [N]} \Delta^N \log c_k(t) \; \sigma_{\alpha}(c_{k-1}(t),c_{k}(t),c_{k+1}(t)) \; c_k(t)^2 \;  N^2 \pra[\Big]{\exp\bra[\big]{ N^{-2} \Delta^N \log c_k(t)} -1 } \leq 0 \,,
	\end{align*}
	where the last inequality follows from the fact that $\R \ni x\mapsto x\bra[\big]{\exp(x)-1} \geq 0$.
	Since $E_N: \Prob^N \to [0,\infty)$ is continuous, we conclude that $t \mapsto E_N(c(t))$ is decreasing on $[0,\infty)$.
\end{proof}

\subsection{Discrete gradient structure in continuity-equation format}%

We now describe the gradient structure for ~\eqref{eq:RRE} on $\Prob^N$ in more detail. For this we use gradient systems in continuity-equation format as in \cite{PeletierSchlichting2022,HeinzePietschmannHeinze2024arXiv,HeinzeMielkeStephan2025}. Note that here the situation simplifies because the spaces for the concentrations and for the fluxes have the same dimension, because we are in one dimension. The first ingredient is the abstract gradient map which is given by the discrete Laplace operator~$\Delta^N$. Secondly, the discrete energy is given in terms of the entropy~\eqref{eq:def:energy}.
By using the notation~\eqref{eq:def:VarDeriv}, we have the identity
\begin{equation*}
 (E_{N}'(c))_{k\in [N]} = \bigl( \log c_{k} \bigr)_{k\in [N]}.
\end{equation*}
The final element is the (dual) dissipation potential $R^*_{\alpha,N}$. For fixed $\alpha>0$, let  the activity $\sigma_\alpha:[0,\infty[^3\to [0,\infty[$ be given and recall the mobility $m_\alpha$ from \eqref{eq:def:m-alpha}:
\begin{equation*}
m_{\alpha}(c_{k-1},c_{k},c_{k+1}) :=\sigma_{\alpha}(c_k, c_{k-1}, c_{k+1})\cdot\bigl(c_{k}\sqrt{c_{k-1}c_{k+1}}\bigr),
\end{equation*}
we observe that provided $\sigma_\alpha$ is homogeneous of degree $\alpha-2$, then $m_\alpha$ is homogeneous of degree $\alpha$, justifying its name.
Now, we define for $c\in \Prob^N$  and any $\xi: [N]\to \R$ the dual dissipation potential by
\begin{equation*}%
R_{\alpha,N}^{*}(c,\xi)=\frac{1}{N}\sum_{k=1}^NN^{4}m_{\alpha}(c_{k-1},c_{k},c_{k+1})\CC^{*}\bigl(N^{-2}\xi_{k}\bigr),
\end{equation*}
where we have $\CC^{*}(r)=4\bigl(\cosh(r/2)-1\bigr)$.
Here, the superscript~$*$ refers to the Legendre duality. Since, $\CC^*$ is convex, we get its convex primal function in terms of
\begin{equation*}%
	\CC(s) = \inf_{r\in \R} \bra[\big]{ rs - \CC^*(r)} \,.
\end{equation*}
Instead of providing the explicit form, it is more conventient, to recall the basic identity
\begin{equation*}
	\CC^{*,\prime}(r) = 2 \sinh(r/2) \qquad\text{and}\qquad \CC'(s) = (\CC^{*,\prime})^{-1}(s) = 2\operatorname{arsinh}(s/2) \,,
\end{equation*}
that is $\CC$ is the primitive of $2\operatorname{arsinh}(s/2)$ with $\CC(0)=0$.
Besides $\CC$, we define its perspective function $\CCC: \R \times [0,\infty]$ defined by
\begin{equation}\label{eq:def:CCC}
    \CCC(s|w) = \begin{cases}
        w \CC\bra[\Big]{\frac{s}{w}} , & w>0 \,; \\
        0 , & s=0, w=0  \,; \\
        +\infty , & s\ne 0 , w=0 \,.
    \end{cases}
\end{equation}
The function $\CC$ and its perspective function $\CCC$ satisfy the following crucial estimate 
\begin{align}\label{eq:MagicalEstimate}
\forall\, s\in\R,w\in[0,\infty[\quad \forall\, q>1:\quad \CC(s)\leq\frac{q}{q-1}\CCC(s|w) + \frac{4 w^q}{q-1}
\end{align}
and the monotonicity property
\begin{align}\label{eq:Monotonicity}
    \forall\, s\in\R , \forall\, w\in[0,\infty[:\quad ]0,\infty[\ni\lambda\mapsto \CCC(\lambda s|\lambda ^2 w) \quad \text{is increasing},
\end{align}
For the proofs of \eqref{eq:MagicalEstimate} and \eqref{eq:Monotonicity}, we refer to \cite{HeinzeMielkeStephan2025}.

Fixing a time horizon $T>0$, and an initial datum $c_0\in \Prob^N_{>0} $ the gradient system with components $(\Prob^N,\Delta^N,E_N,R_{\alpha,N}^*)$
induces on $[0,T]$ the \emph{gradient-flow} equation
\begin{subequations}
 \label{eq:DiscreteGFE}
\begin{alignat}{2}
    \dot c & = \Delta^N J^N, && \qquad\text{ on } ]0,T] \label{eq:CEN}\\
    J^N & = \partial_\xi R_{\alpha,N}^*(c,-\Delta^NE_N'(c)), && \qquad\text{ on } ]0,T]  \label{eq:DiscreteConstitutiveRelation}\\
    c(0) & = c_0 .\nonumber
\end{alignat}
\end{subequations}
The first equation \eqref{eq:CEN} is the second-order continuity equation involving 
the discrete Laplacian. Note that for all curves $c\in \AC([0,T];\Prob^N)$, thanks 
to the conservation of mass,  $\dot c$ has mean-zero, that is $\sum_{k\in [N]} 
\dot c_k=0$. Since $\Delta^N$ is invertible on mean-zero densities and hence for 
a.e.~$t\in [0,T]$  {there} exists a curve of fluxes $J^N(t):[N]\to \R$ such that
$\Delta^N J^N = \dot c$. In the following, the set of curves $(c^N,J^N)$ 
satisfying \eqref{eq:CEN} is denoted by $\CE_N$.

To make the equation \eqref{eq:DiscreteConstitutiveRelation} more explicit, we observe that
$\partial_{\xi_k}R^*_{\alpha,N} = N^2m_\alpha \CC^{*\prime}(N^{-2}\xi_k)$. Inserting $\xi_k=-(\Delta^NE_N'(c))_k$, using the logarithmic rule $\CC^{*\prime}(\log c_{k-1}-2\log c_k+\log c_{k+1}) = \frac{c_k^2-c_{k-1}c_{k+1}}{c_k\sqrt{c_{k-1}c_{k+1}}}$ and the explicit choice of the mobility $m_\alpha$, we obtain
\[
J^N_k = N^2 \sigma_{\alpha}(c_{k-1},c_k,c_{k+1})(c_k^2-c_{k-1}c_{k+1}),
\]
which provides exactly the equation~\eqref{eq:RRE}.

\subsection{Variational characterization via the energy dissipation principle}\label{ssec:discrete:EDB}

One important tool between the gradient system and the  evolution equation  is the so-called \emph{energy-dissipation principle (EDP)}, which provides a variational formulation for the system~\eqref{eq:DiscreteGFE} in which we pass to the limit $N\to \infty$.
For this, we define by Legendre transform the primal dissipation potential $R_{\alpha,N}$, which for $c\in \Prob^N$ and $J:[n]\to \R$ is given by
\[
R_{\alpha, N}(c,J)=\frac{1}{N}\sum_{k=1}^NN^2\CCC\bigl(J_{k}|N^{2}m_{\alpha}(c_{k-1},c_{k},c_{k+1})\bigr) = \frac{1}{N}\sum_{k=1}^N\CCC\bigl(N^2J_{k}|N^{4}m_{\alpha}(c_{k-1},c_{k},c_{k+1})\bigr),
\]
where the so-called \textit{perspective function} $\CCC$ is defined in~\eqref{eq:def:CCC}. 
In addition, we define the \emph{discrete relaxed slope} for 
$c\in \Prob^N_{\geq 0}$ by 
\begin{equation}\label{eq:DiscreteSlope}
S_{\alpha, N}(c):=
R_{\alpha, N}^{*}(c,-\Delta^NE'_{N}(c)) 
=\frac{1}{N}\sum_{k=1}^N2 N^{4}\sigma_{\alpha}(c_{k-1},c_{k},c_{k+1})\bigl(c_{k}-\sqrt{c_{k-1}c_{k+1}}\bigr)^{2} \,,
\end{equation}
where we use for $a,b>0$ the identity
\[
	\sqrt{ab} \, \CC^{*}(\log a -\log b) = 2  \bra[\big]{\sqrt{a} - \sqrt{b}}^2 \,.
\]
Equation \eqref{eq:DiscreteSlope} and the rewriting in terms of $\overline\jmath_\alpha$ with Assumption~\ref{ass:Activity} thus shows, that the  definition of $S_{\alpha,N}:\Prob^N_{>0}\to \R$ can be continuously extended to $\Prob^N_{\geq 0}$, which is what we use from now on.

The primal dissipation functional and the slope-term  give rise to the total dissipation functional defined for curve of a pair of concentration $c\in \AC([0,T],\Prob^N)$ and flux $J^N:[0,T]\to\L^1(\mathbb{T}_N)$ by
\begin{equation}\label{eq:def:dissipation}
D_{\alpha, N}(c,J)  = \begin{cases}
\int_{0}^{T} \bigl( R_{\alpha, N}(c,J)+S_{\alpha, N}(c)\bigr) \d t &\text{ for } (c,J)\in\CE_N \,; \\
+\infty &\text{ for } (c,J)\not \in\CE_N \;.
\end{cases}\end{equation}
In total, we defined now all ingredients for the full energy-dissipation functional $L_{\alpha,N}$ in~\eqref{eq:def:ED-functional} and are now in the position to prove the second part of Result~\ref{result:discrete}.
\begin{proposition}\label{prop:GFDiscrete}
For $\alpha>0$, let $\sigma_\alpha$ satisfy Assumptions \ref{ass:Activity}. 
The solution $c:\AC([0,T],\Prob^N)$ of equation~\eqref{eq:RRE} minimizes the full energy-dissipation functional, i.e. $L_{\alpha,N}(c,J) =0$.	
\end{proposition}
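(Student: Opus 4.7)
The plan is to show $L_{\alpha,N}(c,J) \geq 0$ for every $(c,J)\in \CE_N$ via the Fenchel–Young inequality and the chain rule, and then observe that the constructed solution from Proposition~\ref{prop:WellposednessODE} saturates the inequality because it satisfies the constitutive relation~\eqref{eq:DiscreteConstitutiveRelation}.

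First, since $R_{\alpha,N}(c,\cdot)$ and $R^*_{\alpha,N}(c,\cdot)$ are Legendre duals for every fixed $c\in \Prob^N$, the Fenchel–Young inequality gives, for any $\xi:[N]\to \R$ and any flux $J:[N]\to \R$,
\[
R_{\alpha,N}(c,J) + R^*_{\alpha,N}(c,\xi) \ \geq\ \langle J,\xi\rangle_N,
\]
with equality if and only if $J = \partial_\xi R^*_{\alpha,N}(c,\xi)$. Substituting $\xi = -\Delta^N E_N'(c)$ and using the symmetry~\eqref{eq:def:IntByParts} of $\Delta^N$ as well as the continuity equation~\eqref{eq:CEN}, I get
\[
R_{\alpha,N}(c,J) + S_{\alpha,N}(c)
 \ \geq\ \langle J,-\Delta^N E_N'(c)\rangle_N
 = -\langle \Delta^N J,E_N'(c)\rangle_N
 = -\langle \dot c, E_N'(c)\rangle_N.
\]
For the solution from Proposition~\ref{prop:WellposednessODE} we have $c(t)\in \Prob^N_{>0}$ for $t>0$, so $E_N'(c)=\log c$ is smooth along $c(\cdot)$ and the classical chain rule yields $\langle \dot c(t),E_N'(c(t))\rangle_N = \tfrac{\dx{}}{\dx{t}} E_N(c(t))$ for a.e.\ $t\in(0,T)$.

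Integrating in time from $0$ to $T$ and using continuity of $t\mapsto E_N(c(t))$ up to $t=0$ (which follows from the continuity of $E_N$ on $\Prob^N$), I obtain
\[
\int_0^T\!\!\bigl( R_{\alpha,N}(c,J) + S_{\alpha,N}(c)\bigr)\,\dx t \ \geq\ -\bigl( E_N(c(T)) - E_N(c(0))\bigr),
\]
which is exactly $L_{\alpha,N}(c,J)\geq 0$.

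Finally, to get the reverse inequality for the solution constructed in Proposition~\ref{prop:WellposednessODE}, I note that the explicit form of $J_{\alpha,k}[c]$ in~\eqref{eq:RRE:flux} coincides, as already verified in the preceding subsection, with $\partial_\xi R^*_{\alpha,N}(c,-\Delta^N E_N'(c))$ pointwise in $t>0$; this is precisely the equality case in Fenchel–Young, so the inequality above is in fact an equality at every $t>0$, giving $L_{\alpha,N}(c,J)=0$. The only mildly delicate point is the chain rule at $t=0$ when the initial datum lies in $\Prob^N\setminus \Prob^N_{>0}$, where $E_N'(c)$ has singular entries; this is handled by the instantaneous regularization $c(t)\in\Prob^N_{>0}$ for $t>0$ proved in Proposition~\ref{prop:WellposednessODE} together with the continuity of $E_N$ on $\Prob^N$, so the boundary term at $0$ passes to the limit. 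Apart from this, all estimates are pointwise in $t$ and use only finite-dimensional convex duality, so no further subtlety arises.
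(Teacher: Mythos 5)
Your proof is correct and follows essentially the same route as the paper: chain rule for $E_N$ along the positive solution, self-adjointness of $\Delta^N$, and the Fenchel--Young (in)equality saturated by the constitutive relation $J=\partial_\xi R^*_{\alpha,N}(c,-\Delta^N E_N'(c))$. Your explicit treatment of the boundary term at $t=0$ for initial data in $\Prob^N\setminus\Prob^N_{>0}$ is a welcome extra care that the paper's proof leaves implicit.
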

\begin{proof}
	By Proposition \ref{prop:WellposednessODE}, we now that $[0,T]\ni t \mapsto c(t)$ is a classical solution of \eqref{eq:RRE} and we have $c(t)_k>0$ for all $k\in[N]$ and $t>0$.
    In particular, we have $E_N'(c)_k=\log c_k$. The energy is differentiable, and we obtain on any subinterval $[s,t]\subset[0,T]$ that
    \[
    E_N(c(t))-E_N(c(s)) = \int_s^t\frac{\d}{\d r} E_N(c(r))\d r = \int_s^t   \frac{1}{N} \sum_{k\in [N]} \Delta^N \log c_k(t)   J_{\alpha,k}(c(t)) \d t,
    \]
    where we have used that $\Delta^N$ is self-adjoint. By 
    \eqref{eq:DiscreteConstitutiveRelation} we have $J_{\alpha,N} = \partial_\xi 
    R_{\alpha,N}^*(c,-\partial_{xx} \log c)$, which provides by Young-Fenchel duality, that
    \[
    E_N(c(t))-E_N(c(s)) = -\!\int_s^t \!  \tfrac{1}{N} \sum_{k\in [N]} \! 
    (-\Delta^N \log c_k(t))   J_{\alpha,k}(c(r)) \d t = -\!\int_s^t \!\!
    S_{\alpha,N}(c) + R_{\alpha,N}(c(r),J) \d t.
    \]
    Hence, $L_{\alpha,N}(c,J)=0$.
\end{proof}
\begin{remark}
    Although not needed in the main existence result, we briefly remark that also the converse statement of Proposition \ref{prop:GFDiscrete} holds true, i.e., curves  $(c,J)\in\CE_N$, $c\in\AC([0,T],\Prob^N)$ with $L_{\alpha,N}(c,J)=0$ are solutions of the evolution equation \eqref{eq:RRE} satisfying an entropy dissipation balance for $E_N$. For this, one uses that $\overline\sigma_\alpha(x,y)=0$ iff $x=y=0$ (by the lower bound) and Prop. 6.1 and equation (6.3) from \cite{HeinzeMielkeStephan2025}, which provides the identification of the flux $J$. 
\end{remark}

\section{EDP convergence to continuous system}
\label{sec:EDPconvergence}

\subsection{Continuous spaces and functionals}

The continuous space is $\mathbb{T}$, identified with $[0,1]$ and periodic 
boundary conditions. We consider the state space 
$\L^{1} (\mathbb{T})_{\geq0}$ with the usual dual pairing
by $\langle v,\xi\rangle=\int_{\mathbb{T}}v(x)\xi(x)\dx x$. We will also use the Bochner space on the time interval $[0,T]$, denoted by $\L^p([0,T],\L^q(\bbT))=:\L^p\L^q$, and similarly $\L^p([0,T],\W^{1,q}(\bbT))=:\L^p\W^{1,q}$.
The differential operators are denoted as usual by
\[
\partial_x: \W^{1,1}(\bbT)\to \L^1(\bbT), \qquad\partial_{xx}:\W^{2,1}(\bbT)\to\L^1(\bbT).
\]
For the gradient system in continuity-equation format $(X, \partial_{xx},{\cal E},{\cal R})$ 
with $X:=\L^{1}(\mathbb{T})_{\geq0}$, we consider the Laplacian $\partial_{xx}$ as the 
abstract gradient map. Moreover, we define the (quadratic) dual dissipation 
potential as
\begin{align}\label{eq:def:dual:dissipation:continuous}
  {\cal R}_\alpha^{*}(\rho,\eta) & =\frac{1}{2}\int_{\mathbb{T}} 
 \rho(x)^{\alpha }\eta(x)^{2}\dx x .
\end{align}
For a fixed time horizon $T>0$ the gradient system $(X,\partial_{xx},\cE,\cR^*_\alpha)$ induces on $[0,T]$ a \textit{gradient-flow equation}
\begin{subequations}%
    \begin{align}
        \dot \rho &= \partial_{xx} j \label{eq:CE}\\
        j &= \partial_\eta\cR^*_\alpha(\rho,-\partial_{xx}\D\cE(\rho)) \label{eq:ConstitutiveRelationContinuousGF}\\
        \rho(0) &= \rho_0. \nonumber
    \end{align}
\end{subequations}
The first equation \eqref{eq:CE} is the second-order continuity equation involving the Laplacian, which is understood in the weak-sense, i.e.
\begin{equation}\label{eq:CE:weak}
\forall\, \phi\in \mathrm{C}^2(\bbT): \qquad \frac{\d}{\d t} \int_\bbT\phi\, \rho\dx x = \int_\bbT \partial_{xx}\phi \, j \dx x.
\end{equation}
Note, that restricting the domain of the Laplacian to functions with average
zero, it becomes invertible and selfadjoint i.e. 
\[
\partial_{xx}: H_{\#}^{2}(\bbT):=H^{2}(\bbT)\cap\Bigl\{ f\text{ periodic }:\int_{\mathbb{T}}f=0\Bigr\} \to\L^{2}(\bbT).
\]
Similarly, we use $\L^p_\#(\bbT)$, etc.\@ for other function spaces with average zero. 
We note that  {double primitive} $\partial_{xx}^{-1}:\L^p_\#(\bbT)\to\W^{2,p}_\#(\bbT)$ is bounded for all $p\in[1,\infty]$.
In particular, for a curve $\rho\in\W^{1,1}(0,T;\bbT)$ it follows that there exists for a.e. $t\in[0,T]$ a flux $j(t)\in \W^{2,1}(\bbT)$ such that $\dot \rho = \partial_{xx} j$. The set of curves $(\rho,j)$ is denoted by $\CE$.

For the second equation \eqref{eq:ConstitutiveRelationContinuousGF}, we consider positive concentrations $\rho$ and observe $\D\cE(\rho)=\log\rho$. Using $\partial_\eta\cR^*_\alpha(\rho,\eta)=\rho^\alpha \eta$, we derive formally the flux $j = - \rho^\alpha \partial_{xx}(\log\rho)$ which provides exactly the equation \eqref{eq:DLSS:alpha}. 
We highlight the two cases $\alpha=1$, corresponding to the usual DLSS equation $\dot{\rho}=-\partial_{xx}\bigl(\partial_{xx}\rho + \frac{(\partial_x \rho)^2}{\rho}\bigr)$, and $\alpha=2$, which originates on the discrete level from the mass-action law in chemical kinetics and the according space-continuous equation is of the form $\dot{\rho}=-\partial_{xx}\bigl(\rho\partial_{xx}\rho-(\partial_x \rho)^{2}\bigr)$.

\subsection{Variational characterization of \texorpdfstring{DLSS$_\alpha$}{DLSS with mobility}}

Analogously to the discrete setting, we now derive the energy-dissipation balance. The primal dissipation potential $\cR_{\alpha}$ is again quadratic and, by duality to the dual~\eqref{eq:def:dual:dissipation:continuous}, given by~\eqref{eq:def:primal:dissipation}.

The next lemma shows that the tentative definition of the slope given by the functional $\mathcal{S}_{\alpha,+}$ in~\eqref{eq:def:slope+} has a relaxed lower semicontinuous envelope for non-negative Sobolev functions, which we call the~\emph{relaxed slope} $\cS_{\alpha}$ and it is the main ingredient in our analysis.
\begin{lemma}[Relaxed slope]\label{lem:RelaxedSlope}
	For all smooth positive $\rho$ it holds that $\cS_{\alpha,+}(\rho) = \cS_{\alpha}(\rho)$. 
	Hereby, the relaxed slope $\cS_{\alpha}$ is equivalently defined for all non-negative  {densities} $\rho:\bbT\to[0,\infty[$ with $\rho^{\alpha/2}\in\W^{2,2}(\bbT)$ and $\rho^{\alpha/4}\in\W^{1,4}(\bbT)$ by
\begin{subequations}
 \label{eq:RelaxedSlopes}
  \begin{align}
   \label{eq:RelaxedSlope.a}
   \cS_\alpha(\rho) 
   :=& \frac{2}{\alpha^2} \int_\bbT \Bigl(\partial_{xx}\rho^{\alpha/2} 
    -4 (\partial_{x}\rho^{\alpha/4})^{2} \Bigr)^{2} \!\dx x
\\
    \label{eq:RelaxedSlope.b}
   =&  \frac{2}{\alpha^2} \int_\bbT \Bigl(\bigl(\partial_{xx} \rho^{\alpha/2}\bigr)^2 
        + \frac{16}{3}\bigl(\partial_x\rho^{\alpha/4}\bigr)^4 \Bigr) \dx x
\\
\label{eq:RelaxedSlope.BenamouBrenier}
 = &  \frac{2}{\alpha^2}
  \int_{\mathbb{T}}\frac{\bigl(\rho^{\alpha/2}\partial_{xx}\rho^{\alpha/2} 
  -\bigl(\partial_{x}\rho^{\alpha/2}\bigr)^{2}\bigr)^{2}}{\rho^\alpha}\dx x
\\
  \label{eq:RelaxedSlope.c} 
   =& \frac12 \int_\bbT  \Bigl(\frac{(\partial_{xx} \rho)^2}{\rho^{2-\alpha}} +
   \frac{2\alpha{-}3}{3}\, \frac{(\partial_x\rho)^4}{\rho^{4-\alpha}} \Bigr) \dx x \,.
    \end{align}
\end{subequations}    
\end{lemma}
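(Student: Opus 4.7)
\emph{Proof plan.} The plan is to prove all four identities and the equality $\cS_{\alpha,+}=\cS_\alpha$ in a loop passing through the Benamou--Brenier form \eqref{eq:RelaxedSlope.BenamouBrenier}, working first on smooth strictly positive $\rho$ where everything reduces to chain-rule identities and integration by parts on the torus (no boundary terms), and then extending to the stated Sobolev class by density. The two key substitutions are $u:=\rho^{\alpha/2}$ and $v:=\rho^{\alpha/4}=u^{1/2}$.

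For the equivalence with $\cS_{\alpha,+}$, I use $\log\rho=(2/\alpha)\log u$ to compute $\Delta\log\rho=(2/\alpha)(\Delta u/u-(\partial_x u)^2/u^2)$. Inserted into $\cS_{\alpha,+}(\rho)=\tfrac12\int\rho^\alpha(\Delta\log\rho)^2\dx x$ together with $\rho^\alpha=u^2$, this yields \eqref{eq:RelaxedSlope.BenamouBrenier} directly. Passing to \eqref{eq:RelaxedSlope.a} exploits $u=v^2$: one computes $\Delta u=2(\partial_x v)^2+2v\Delta v$ and $(\partial_x u)^2/u=4(\partial_x v)^2$, hence $(u\Delta u-(\partial_x u)^2)/u=\Delta u-4(\partial_x v)^2$, and squaring identifies the two integrands. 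For \eqref{eq:RelaxedSlope.a}$\to$\eqref{eq:RelaxedSlope.b} I expand the square and treat the cross-term by two integrations by parts: using $(\partial_x v)^2\partial_{xx}v=\tfrac13\partial_x((\partial_x v)^3)$ together with $\partial_x u=2v\partial_x v$ yields $\int\Delta u\,(\partial_x v)^2\dx x=\tfrac43\int(\partial_x v)^4\dx x$, whence the coefficient $16-\tfrac{32}{3}=\tfrac{16}{3}$.

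For \eqref{eq:RelaxedSlope.BenamouBrenier}$\to$\eqref{eq:RelaxedSlope.c} I undo the substitution: the chain rule gives $u\Delta u-(\partial_x u)^2=\tfrac{\alpha}{2}(\rho^{\alpha-1}\Delta\rho-\rho^{\alpha-2}(\partial_x\rho)^2)$, so that \eqref{eq:RelaxedSlope.BenamouBrenier} rewrites as $\tfrac12\int\rho^{\alpha-4}(\rho\Delta\rho-(\partial_x\rho)^2)^2\dx x$. Expanding the square, the only nontrivial ingredient is the cross-term, which I handle by differentiating $\rho^{\alpha-3}(\partial_x\rho)^2$ by parts and regrouping the resulting occurrence of the same integral, leading to the identity $3\int\rho^{\alpha-3}\Delta\rho\,(\partial_x\rho)^2\dx x=(3-\alpha)\int\rho^{\alpha-4}(\partial_x\rho)^4\dx x$. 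Combining the three contributions gives the coefficient $\tfrac{2\alpha-3}{3}$ displayed in \eqref{eq:RelaxedSlope.c}.

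The remaining step is the extension to the non-smooth class $\{\rho\geq 0:\rho^{\alpha/2}\in\W^{2,2}(\bbT),\ \rho^{\alpha/4}\in\W^{1,4}(\bbT)\}$, and this is where I expect the main difficulty to lie. My plan is to regularize via $u_\varepsilon:=u*\eta_\varepsilon+\varepsilon>0$ and $\rho_\varepsilon:=u_\varepsilon^{2/\alpha}$, apply the identities in the smooth positive regime, and pass to the limit using convergence in $\W^{2,2}$ for $u$ and in $\W^{1,4}$ for $v$. The subtle point is that \eqref{eq:RelaxedSlope.BenamouBrenier} and \eqref{eq:RelaxedSlope.c} contain powers of $\rho$ in the denominator and exhibit a $0/0$ indeterminacy on $\{\rho=0\}$; the remedy is the pointwise a.e.\ identity $u\Delta u-(\partial_x u)^2=u(\Delta u-4(\partial_x v)^2)$, valid from $u=v^2$ together with the stated regularity, which makes the quotient equal to $(\Delta u-4(\partial_x v)^2)^2$ everywhere and provides $\tfrac{2}{\alpha^2}\int(\Delta u-4(\partial_x v)^2)^2\dx x$ as the unambiguous common value of all four representations on the stated domain.
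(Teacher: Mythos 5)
Your proposal is correct and follows essentially the same route as the paper: the same substitutions $u=\rho^{\alpha/2}$, $v=\rho^{\alpha/4}$, the same integration-by-parts identity $\gamma\int u^{\gamma-1}(\partial_x u)^4 + 3\int u^{\gamma}(\partial_x u)^2\Delta u =0$ (with $\gamma=1$ for \eqref{eq:RelaxedSlope.b} and $\gamma=\alpha-3$ for \eqref{eq:RelaxedSlope.c}), and the same density/continuous-extension argument at the end; you merely traverse the chain of identities in a different order (via \eqref{eq:RelaxedSlope.BenamouBrenier} first rather than \eqref{eq:RelaxedSlope.a}). Your explicit remark that the $0/0$ indeterminacy in \eqref{eq:RelaxedSlope.BenamouBrenier} is resolved by the pointwise identity $u\Delta u-(\partial_x u)^2=u\bigl(\Delta u-4(\partial_x v)^2\bigr)$ is a welcome clarification of a step the paper leaves implicit.
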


\begin{remark}
	The several different forms of the slope will have 
	their different advantages in various situations: 
	\begin{itemize}
		\item[\eqref{eq:RelaxedSlope.a}] shows that the integrand of $\cS_\alpha$ equals 
	$\frac12 \Sigma^2$ with $\Sigma = -\frac2\alpha\bigl( 
	\partial_{xx}\rho^{\alpha/2} -4 (\partial_{x}\rho^{\alpha/4})^{2}\bigr)$ which
	plays a prominent role in the proof of the chain-rule (Proposition \ref{pr:ChainRule});
	\item[\eqref{eq:RelaxedSlope.b}] shows that $\cS_\alpha$ controls the $\L^2$ norms of 
	$\partial_{xx}\rho^{\alpha/2} $ and $(\partial_{x}\rho^{\alpha/4})^{2}$ justifying the assumption of the Lemma on $\rho$;
    \item[\eqref{eq:RelaxedSlope.BenamouBrenier}] characterizes the integrand of the slope $\cS_\alpha$  by a jointly convex function $u^2/v$, which is used in the proof of the liminf estimate;
	\item[\eqref{eq:RelaxedSlope.c}] shows that $\cS_\alpha$ is convex for the range $\alpha\in [3/2,2]$: the coefficient $\frac{2\alpha-3}{3}$ of the term $\frac{(\partial_x\rho)^4}{\rho^{4-\alpha}}$ is non-negative precisely for $\alpha\geq3/2$ and changes sign to negative for $\alpha<3/2$. For $\alpha<3/2$, the non-negativity of $\cS_\alpha$ follows instead from the manifestly non-negative form~\eqref{eq:RelaxedSlope.a}.
	\end{itemize}
\end{remark}

The proof of~\eqref{eq:RelaxedSlopes} relies on the classical chain rule for Sobolev functions and the integration-by-parts rule 
	\begin{equation}
		\label{eq:UsefulRelation}
		0=\int_\bbT \partial_x \bigl( u^\gamma \,(\partial_x u)^3\bigr) \dx x 
		=  \gamma \int_\bbT u^{\gamma-1} (\partial_x u)^4 \dx x + 3 \int_\bbT
		u^\gamma \,(\partial_x u)^2 \partial_{xx} u \dd x ,
	\end{equation}
	which holds for smooth and positive $u$ and all $\gamma \in \R$.
\begin{proof}
For a positive smooth function $\rho:\bbT\to ]0,\infty[$, a direct computations shows
\[
  \rho^{\alpha}\bigl(\partial_{xx}\log\rho\bigr)^{2} = \Bigl(\rho^{\alpha/2-1}
  \partial_{xx}\rho-\rho^{\alpha/2-2}\bigl(\partial_{x}\rho\bigr)^{2}\Bigr)^{2}.
\]
 Using that $\rho^{\alpha/2-1}\partial_{xx}\rho=\frac{2}{\alpha}\partial_{xx}\rho^{\alpha/2}
 +(1{-}\alpha/2)\rho^{\alpha/2-2}\bigl(\partial_{x}\rho\bigr)^{2}$ and
 $\partial_{x}\rho^{\alpha/4} =\frac{\alpha}{4}\rho^{\alpha/4-1}\partial_{x}\rho$,
 we obtain
  {\begin{equation*}
   \rho^{\alpha/2-1}\partial_{xx}\rho - \rho^{\alpha/2-2}(\partial_x\rho)^2
   = \frac{2}{\alpha}\Bigl(\partial_{xx}\rho^{\alpha/2} - \bigl(2\partial_x\rho^{\alpha/4}\bigr)^2\Bigr),
 \end{equation*}
 where $\frac{2}{\alpha}$ is a global prefactor of the entire bracket. Hence}
 $\rho^{\alpha} \bigl(\partial_{xx}\log\rho\bigr)^{2}
 = \biggl(\frac{2}{\alpha} \Bigl(\partial_{xx}\rho^{\alpha/2} -  (2\partial_{x}
 \rho^{\alpha/4})^{2} \Bigr)\biggr)^{2}$, which proves the first formula~\eqref{eq:RelaxedSlope.a}.

Moreover, we have 
\[
  \biggl(\frac{2}{\alpha}\Bigl(\partial_{xx}\rho^{\alpha/2} -
     4(\partial_{x}\rho^{\alpha/4})^{2}\Bigr)\biggr)^{2}
  = \frac{4}{\alpha^2}\biggl((\partial_{xx}\rho^{\alpha/2})^2-8 \partial_{xx}\rho^{\alpha/2}
   (\partial_x\rho^{\alpha/4})^2 + 16 (\partial_x\rho^{\alpha/4})^4  \biggr).
\]
The integral of the middle term in the last expression can be simplified 
by using \eqref{eq:UsefulRelation} with $\gamma=1$ and $u=\rho^{\alpha/4}$ and 
exploiting the identity $\partial_{xx}(u^2) = 2u\partial_{xx} u + 2(\partial_x u)^2$, namely 
\begin{align*}%
 \int_\bbT \bigl(\partial_x \rho^{\alpha/4}\bigr)^2\partial_{xx} \rho^{\alpha/2} \dx x 
  =  \int_\bbT (\partial_x u)^2 \partial_{xx} u^2  \dx x 
  = 2 \int_\bbT \bigl( u(\partial_x u)^2\partial_{xx} u + (\partial_x u)^4\bigr) \dx x 
  = \frac43 \int_\bbT \bigl(\partial_x\rho^{\alpha/4}\bigr)^4 \dx x. 
\end{align*}
With this, \eqref{eq:RelaxedSlope.b} is established. 

Using  
$\frac{(\partial_x\rho^{\alpha/2})^2}{\rho^{\alpha/2}} = 
4 (\partial_x\rho^{\alpha/4})^2$, we obtain \eqref{eq:RelaxedSlope.BenamouBrenier} from \eqref{eq:RelaxedSlope.a}. To obtain the last form~\eqref{eq:RelaxedSlope.c}, we re-express the integrand in \eqref{eq:RelaxedSlope.b} 
by powers of $\rho$, $\partial_x \rho$ and $\partial_x \rho$. From $(\partial_{xx} 
\rho^{\alpha/2})^2$ we obtain a mixed term $\rho^{\alpha-3}(\partial_x \rho)^2 
\partial_{xx} \rho$ which is integrated by parts via \eqref{eq:UsefulRelation} with 
$\gamma=\alpha{-}3$ and $u=\rho$. After some cancellations we obtain the desired 
form \eqref{eq:RelaxedSlope.c}.

Thus, \eqref{eq:RelaxedSlope.a}--\eqref{eq:RelaxedSlope.c} hold for positive 
and smooth $\rho$. 
Since the right-hand sides can be continuously extended 
to all non-negative $\rho$ with $\rho^{\alpha/2}\in\W^{2,2}(\bbT)$ and 
$\rho^{\alpha/4}\in\W^{1,4}(\bbT)$, the claim follows.
\end{proof}

\begin{remark}[Critical spaces]\label{rem:CriticalSpace}
    Equation \eqref{eq:RelaxedSlope.b} shows that the slope is given as a sum of two terms involving a second and first order differential operator, respectively. However, the term $(\partial_x\rho^{\alpha/4})^4$ cannot be treated as a lower order term w.r.t. $(\partial_{xx}\rho^{\alpha/2})^2$ in the proof of the convergence result in Section~\ref{sec:EDPconvergence}. 
    Indeed, one can show that the mapping $v \mapsto w=\sqrt{v}$ is bounded and weakly continuous from 
	$\rmH^2_{\geq0}(\bbT)$ $:=$ $ 
	\bigl\{ v\in \rmH^2(\bbT)\,\big|\, v\geq 0 \text{ a.e. }\bigr\}$ into
	$\rmW^{1,4}(\bbT)$, but not compact. \\
	The boundedness follows by observing that
	from $\partial_{xx} v= \partial_{xx} w^2 = 2 w\partial_{xx} w + 2(\partial_x w)^2$ combined in
	\eqref{eq:UsefulRelation} with $\gamma=1$ and $u=v$, we obtain 
	\[
		\int_\bbT (\partial_x w)^4 \dd x = \frac34 \int_\bbT (\partial_x w)^2 \partial_{xx} v \dd x 
		\quad \text{implying} \quad \int_\bbT (\partial_x w)^4 \dd x
 		\leq \frac9{16} \int_\bbT (\partial_{xx} v)^2 \dx x \,.
	\]
	From this, we easily see that $v\in \rmH^2_{\geq 0}(\bbT)$ implies 
	$w=\sqrt{v}\in \rmW^{1,4}(\bbT)$ with $\| w\|_{\rmW^{1,4}}^4 \leq
	\| v\|_{\rmH^2}^2$. 

	For the non-compactness, we consider a sequence $v_m \rightharpoonup v$ in $\rmH^2_{\geq 0}(\bbT)$ 
	and define $w_m = \sqrt{v_m}$ and $w=\sqrt{v}$. Using $|\sqrt{a}{-}\sqrt{b}| 
	\leq \sqrt{|a{-}b|}$ we find $\| w_m{-}w\|_{\rmL^4}^4 \leq \| v_m{-}v 
	\|_{\rmL^2}^2 \to 0$ and conclude $ w_m \rightharpoonup w $ in $\rmW^{1,4}(\bbT)$. 
	Hence, to show that the mapping is not compact, we construct a weakly converging sequence 
	$(u_m)_m$ in $\rmH^2(\bbT)$ such that $w_m=\sqrt{v_m}$ does not converge strongly. 
	For $m\in \N$, we set 
	\[
		v_m(x) = \frac1{m^{1/2}} | \sin(2\pi x)|^{3/2+2/m} \quad \text{and} 
		\quad w_m(x) = \sqrt{v_m(x)} = \frac1{m^{1/4}} | \sin(2\pi x)|^{3/4+1/m} 
	\]
	We easily obtain $\| w_m\|_{\rmL^4}^4 = \| u_m\|_{\rmL^2}^2 \leq C/m$. 
	Moreover, an explicit calculation gives 
	\[
		\| \partial_x w_m\|_{\rmL^4}^4 \to C_1 >0 \quad \text{and} \quad 
		\| \partial_{xx} v_m\|_{\rmL^2}^2 \to C_2 >0.
	\]
	Thus, $w_m\rightharpoonup 0$ in $\rmW^{1,4}(\bbT)$ and $ v_m \rightharpoonup  0$ in $\rmH^2(\bbT)$, but strong convergence fails in both cases.   
\end{remark}

With the relaxed slope and the primal dissipation potential we now define the total dissipation functional for a curve of a pair of density $\rho$ and flux $j$ by
\begin{align}\label{eq:ContinunousDissipationFunctional}
{\cal D}_\alpha(\rho, j):=\begin{cases}
\int_{0}^{T}\bigl({\cal R}_\alpha(\rho,j)+\cS_\alpha(\rho)\bigr)\dx t & \text{ for } (\rho,j)\in\CE,\\
+\infty & \text{ for } (\rho,j)\not\in\CE,
\end{cases}
\end{align}
where $\CE$ denotes solutions the second-order continuity equation~\eqref{eq:CE:weak}.
In this way, we have introduced all ingredients to define energy dissipation functional~\eqref{eq:EDfunctional} to formulate our notions of solutions. 
For this, we distinguish curves that satisfy the \emph{energy-dissipation inequality} (EDI) and the \emph{energy-dissipation balance} (EDB).

\begin{definition}%
	A curve $\rho:[0,T]\to\L^1(\bbT)$ is an EDI solution of \eqref{eq:DLSS:alpha} if $\sup_{t\in[0,T]}\mathcal{E}(\rho(t))<\infty $ and if there exists a curve of fluxes $j:[0,T]\to\L^1(\bbT)$ such that $(\rho,j)\in\CE$ and $\mathcal{L}_\alpha(\rho,j)\leq 0$. If, in addition, $(\rho,j)$ satisfies for all $[s,t]\subset[0,T]$ the balance equation
	\[
	\cE(\rho(t))- \cE(\rho(s)) + \int_s^t \bigl( \cR_\alpha(\rho(r),j(r)) + \cS_\alpha(\rho(r))\bigr)\dx r = 0,
	\]
	then $\rho:[0,T]\to\L^1(\bbT)$ is called EDB solution.
\end{definition}
 {The EDI is the natural outcome of the EDP convergence limit (see Result~\ref{result:EDP} via lower semicontinuity of $\cE$ and liminf estimates for $\cR_\alpha$ and $\cS_\alpha$ proven in Section~\ref{s:ConvergenceProof}); the EDB additionally requires the chain-rule of Proposition~\ref{pr:ChainRule} to close the reverse inequality $\mathcal{L}_\alpha(\rho,j)\geq0$.}

\subsection{Embeddings}\label{ss:Embeddings}

The convergence result of the discrete energy-dissipation functional $L_{\alpha,N}$ towards its continuous analog $\mathcal{L}_{\alpha}$ will be formulated with the help of suitable embeddings of the discrete quantities into the continuum.

\subsubsection*{Embedding of densities}
A discrete concentration vector $c\in\L^{1}(\mathbb{T}_{N})$ is turned into 
a density on $\mathbb{T}=[0,1]_{0\sim1}$ by a piecewise constant interpolation $\iota : \L^1(\mathbb{T}_N)\to \L^1(\mathbb{T})$ defined by
\begin{equation}\label{eq:def:EmbeddingDensity}
\bra[\big]{\iota_{N}c^{N}}(x):=\rho^{N}(x):=\sum_{k= {1}}^{N}c_{k}^{N}\one_{[ {(k-1)/N},k/N[}(x) \,.
\end{equation}
The dual embedding is an integral average given by
\[
\bigl(\iota_{N}^*\phi\bigr)_{k}=N\int_{ {(k-1)/N}}^{k/N}\phi(x)\dx x \,.
\]
Indeed, we have the identity
$
\langle\iota_{N}c^{N},\phi\rangle=\frac{1}{N}\sum_{k=1}^Nc_{k}^{N}\  N\int_{ {(k-1)/N}}^{k/N}\phi\dx x=\langle c^{N},\iota_{N}^{*}\phi\rangle_{N}$.
In particular, we have for the embedded density from~\eqref{eq:def:EmbeddingDensity} the identity
\[
\int_{\mathbb{T}}\rho^{N}\dx x=\sum_{k=1}^N\int_{ {(k-1)/N}}^{k/N}c_{k}^{N}\dx x=\frac{1}{N}\sum_{k}c_{k}^{N}.
\]
For the discrete spaces we use for $p\geq1$ the following norms
\[
\|c^{N}\|_{\L^{p}_N}:=\biggl(\frac{1}{N}\sum_{k=1}^N|c_{k}^{N}|^{p}\biggr)^{1/p}
\]
and observe that the embedding $\iota_{N}:(\L^{1}(\mathbb{T}_{N}),\|\cdot\|_{\L^p_N})\to(\L^{1}(\mathbb{T}),\|\cdot\|_{\L^{p}(\mathbb{T})})$
is norm-preserving
\begin{equation*}%
\|\iota_{N}c\|_{\L^{p}(\mathbb{T})}^{p}=\int_{\mathbb{T}}|\iota_{N}c(x)|^{p}\dx x = \frac{1}{N} \sum_{k=1}^N|c_{k}|^{p}=\|c\|_{\L^p_N}^{p} \,.
\end{equation*}
\subsubsection*{Embedding of fluxes}
Using the discrete Laplacian $\Delta^N$ and the usual Laplacian $\partial_{xx}$, we define for the fluxes the following embedding operator 
\begin{equation}\label{eq:def:FluxEmbedding}
{\cal I}_{N}J^{N}(x):=\bigl(\partial_{xx}^{-1}\iota_{N}\Delta^{N}J^{N}\bigr)(x).
\end{equation}
A simple calculation shows that this embedding is consistent with both continuity equations, the discrete~\eqref{eq:CEN} and the continuous~\eqref{eq:CE}. Indeed, we have the equivalent formulations
\begin{align}\label{eq:ContinuityEquations}
\bigl(\iota_{N}{c}^{N},{\cal I}_{N}J^{N}\bigr)\in\CE  \quad \Leftrightarrow  \quad \bigl({c}^{N},J^{N}\bigr)\in\CE_{N},
\end{align}
because a simple computation shows for all $\phi\in{\cal D}(\mathbb{T})$ and $\xi=\iota_{N}^{*}\phi\in{\cal D}(\mathbb{T}_N)$ that
\begin{alignat*}{2}
\bigl(\iota_{N}{c}^{N},{\cal I}_{N}J^{N}\bigr)\in\CE & \quad \Leftrightarrow\quad \forall\,\phi\in{\cal D}(\mathbb{T}):& \langle\iota_{N}\dot{c}^{N},\phi\rangle&=\langle{\cal I}_{N}J^{N},\partial_{xx}\phi\rangle =\langle\iota_{N}\Delta^{N}J^{N},\phi\rangle,\\
\text{and }\, \bigl({c}^{N},J^{N}\bigr)\in\CE_{N}&\quad \Leftrightarrow\quad \forall\,\xi\in{\cal D}(\mathbb{T}_{N}):& \langle\dot{c}^{N},\xi\rangle_{N}&=\langle\Delta^{N}J^{N},\xi\rangle_{N}.
\end{alignat*}
Moreover, we introduce the discrete analogue of the classical (homogeneous) Sobolev spaces by the norms
\[
\|\phi\|_{\W_{N}^{2,p}}^{p}:=\frac{1}{N}\sum_{k}\bigl\lvert(\Delta^{N}\phi)_{k}\bigr\rvert^{p}=\|\Delta^{N}\phi\|_{\L_{N}^{p}}^{p},
\]
In particular, we have that 
\begin{equation*}%
	\Delta^{N}:\W_{N}^{2,p}\to\L_{N}^{p}  \quad \text{ is an isomorphism}.
\end{equation*}
Since, $\iota_{N}:\L_{N}^{p}\to\L^{p}$ is bounded, and $\partial_{xx}^{-1}:\L^{p}\to\W^{2,p}$
is bounded, we conclude for the flux embedding in~\eqref{eq:def:FluxEmbedding} that
$ {\cal I}_N = \partial_{xx}^{-1}\iota_{N}\Delta^{N}:\W_{N}^{2,p}\to\W^{2,p}$ is also bounded.
\begin{lemma}[Flux embedding]\label{lem:FluxEmbedding}
 {For} all discrete flux $J^N\in \L^1_N$ it holds
$\|{\cal I}_{N}J^{N}\|_{\L^{1}(\mathbb{T})}\leq2\|J^{N}\|_{\L^1_N}$. 
If the sequence $(J^N)_{N\in\N}$ is uniformly bounded, i.e. $\sup_N\|J^N\|_{\L^1_N}<\infty$, then there exists a subsequence such that both embedded fluxes $\iota_NJ^N\in\L^1$ and $\mathcal{I}_NJ^N\in\L^1$ converges as finite measures in $\mathcal{M}(\bbT)$ and their limits coincide.
\end{lemma}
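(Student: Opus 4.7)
Both assertions will follow from the duality between $\iota_N$ and its adjoint $\iota_N^*$, combined with the discrete integration-by-parts identity \eqref{eq:def:IntByParts} and a Taylor-expansion consistency bound that quantifies $\Delta^N\iota_N^*$ as a pointwise-uniform approximation of $\Delta$ on smooth functions. The same formula underlies both the norm bound and the identification of weak-$*$ limits.

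For the $\L^1(\bbT)$-estimate, I would argue by duality against $\phi\in C(\bbT)$ with $\|\phi\|_\infty\leq 1$. Setting $\phi_\#:=\phi-\int_\bbT\phi\dx x$ and $\psi:=\Delta^{-1}\phi_\#$, and using that $\iota_N\Delta^N J^N$ is mean-zero by telescoping, the selfadjointness of $\Delta^{-1}$ on mean-zero functions yields
\[
\int_\bbT \mathcal{I}_N J^N\,\phi\dx x \;=\; \int_\bbT \iota_N\Delta^N J^N\cdot\psi\dx x \;=\; \langle \Delta^N J^N,\iota_N^*\psi\rangle_N \;=\; \langle J^N,\Delta^N\iota_N^*\psi\rangle_N.
\]
The integral identity $\psi(y{+}h)-2\psi(y)+\psi(y{-}h)=\int_{-h}^{h}(h{-}|s|)\phi_\#(y{+}s)\dx s$ with $h=1/N$, together with $\|\phi_\#\|_\infty\leq 2\|\phi\|_\infty$, then gives the pointwise bound $|(\Delta^N\iota_N^*\psi)_k|\leq 2\|\phi\|_\infty$. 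A discrete Hölder inequality in $\L^1_N\times\L^\infty_N$ delivers $\|\mathcal{I}_N J^N\|_{\L^1(\bbT)}\leq 2\|J^N\|_{\L^1_N}$.

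For the convergence, both sequences are bounded in $\L^1(\bbT)\subset\mathcal{M}(\bbT)$, so Banach--Alaoglu gives a common weakly-$*$ convergent subsequence with limits $\mu,\nu\in\mathcal{M}(\bbT)$. To identify $\mu=\nu$ I would test against $\phi\in C^\infty(\bbT)$; the same computation as above produces
\[
\int_\bbT(\iota_N J^N-\mathcal{I}_N J^N)\,\phi\dx x \;=\; \langle J^N,\,\iota_N^*\phi-\Delta^N\iota_N^*\Delta^{-1}\phi_\#\rangle_N.
\]
Splitting $\iota_N^*\phi=\iota_N^*\phi_\# + \bar\phi\,\mathbf{1}$ and applying a higher-order Taylor expansion to the smooth $\psi=\Delta^{-1}\phi_\#$ yields $\|\iota_N^*\phi_\# - \Delta^N\iota_N^*\psi\|_{\L^\infty_N}\to 0$, so this contribution is controlled by $\|\iota_N^*\phi_\#-\Delta^N\iota_N^*\psi\|_{\L^\infty_N}\cdot\|J^N\|_{\L^1_N}\to 0$.

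The hard part will be the residual mean contribution $\bar\phi\cdot\bar J^N$ with $\bar J^N=\frac1N\sum_k J^N_k$: indeed $\mathcal{I}_N J^N$ is mean-zero by construction whereas $\iota_N J^N$ has mean $\bar J^N$, so a priori the limits differ by $(\lim_N\bar J^N)\,\dx x$. This obstacle is resolved by observing that $\mathcal{I}_N J^N=\mathcal{I}_N(J^N-\bar J^N\mathbf{1})$, since $\Delta^N$ annihilates constants: replacing $J^N$ by its mean-zero correction $\tilde J^N:=J^N-\bar J^N\mathbf{1}$ (which still satisfies $\|\tilde J^N\|_{\L^1_N}\leq 2\|J^N\|_{\L^1_N}$) preserves $\mathcal{I}_N J^N$ and reduces the identification to the mean-zero case, in which the preceding estimate, combined with density of $C^\infty(\bbT)$ in $C(\bbT)$ and the uniform $\L^1$ bounds, forces $\mu=\nu$. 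This normalisation is compatible with the second-order continuity equation \eqref{eq:CE:weak}, which is invariant under adding a spatial constant to the flux.
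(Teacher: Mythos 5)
Your proof of the $\L^1$-estimate and of the identification of the weak-$*$ limits against mean-zero test functions is correct and follows essentially the same route as the paper: duality against $\W^{2,\infty}$ potentials, the pointwise bound $|(\Delta^N\iota_N^*\psi)_k|\le 2\|\Delta\psi\|_{\L^\infty}$ from a second-difference remainder formula, and the $O(1/N)$ commutator estimate \eqref{eq:CommutatorEstimate} for smooth test functions. Your exact integral remainder $\int_{-h}^{h}(h-|s|)\phi_\#(y+s)\dx s$ is a marginally cleaner way to obtain the constant $2$ than the paper's double mean-value argument, but the mechanism is identical.

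Where you genuinely depart from the paper is in tracking the mean of the flux, and there you have spotted a real imprecision --- but your fix does not do what you claim. Replacing $J^N$ by $\tilde J^N=J^N-\bar J^N\mathbf 1$ leaves $\mathcal{I}_N J^N$ unchanged but changes $\iota_N J^N$ by the constant $\bar J^N$, so the reduction proves that the limits of $\iota_N\tilde J^N$ and $\mathcal{I}_N\tilde J^N$ coincide, not that those of the original pair do. No argument can close this gap, since for $J^N\equiv\mathbf 1$ one has $\iota_N J^N\equiv 1\to\dx x$ while $\mathcal{I}_N J^N\equiv 0$. What your computation (and the paper's, which only ever tests against $\Delta\xi$) actually yields is $\langle\mu-\nu,\Delta\xi\rangle=0$ for all $\xi\in C^\infty(\bbT)$, hence $\mu-\nu=(\lim_N\bar J^N)\dx x$ along the subsequence; this constant vanishes precisely when the discrete fluxes are normalized to be mean-free (the canonical choice when inverting $\Delta^N$ on $\dot c$), and it is in any case invisible to the second-order continuity equation and to the downstream liminf estimates, which pair $j$ only with $\Delta\xi$. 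Your closing remark is therefore the correct resolution; the normalization trick preceding it is not a proof of the statement as literally written.
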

\begin{proof}
We first derive the estimate for the fluxes.
Let us fix $N\in\N$. First, take $\phi\in\W^{2,\infty}(\bbT)\subset C^{1,\gamma}(\bbT)$
for all $\gamma\in[0,1[$. 
By the mean-value theorem, for all $x\in[0,1]$
there are $x_{-},x_{+}\in[0,\frac{1}{N}]$ such that 
\[
\phi(x-N^{-1})-\phi(x)=N^{-1}\phi'(x-x_{-}),\quad\phi(x)-\phi(x+N^{-1})=N^{-1}\phi'(x+x_{+}).
\]
Hence, we have that 
\[
\phi(x-N^{-1})-2\phi(x)+\phi(x+N^{-1})=\frac{1}{N}\bigl( \phi'(x-x_{-})-\phi'(x+x_{+})\bigr) =-\frac{1}{N}\int_{-x_{-}}^{x_{+}}\phi''(x+t)\dx t,
\]
where we have used that $\phi'$ is absolutely continuous. In particular,
we have for all $x\in[0,1]$ that 
\[
\bigl\lvert\phi(x-N^{-1})-2\phi(x)+\phi(x+N^{-1})\bigr\rvert \leq\frac{2}{N^{2}}\|\phi''\|_{\infty}.
\]
Hence, we get that for all $k$ that 
\begin{align*}
\bigl\lvert\bigl(\Delta^{N}\iota_{N}^{*}\phi\bigr)_{k}\bigr| & =N\biggl\lvert \int_{k/N}^{(k+1)/N}N^2\bigl(\phi(x-N^{-1})-2\phi(x)+\phi(x+N^{-1})\bigl) \dx x \biggr\rvert \leq2\|\phi''\|_{\infty},
\end{align*}
which implies that
\[
\langle{\cal I}_{N}J^{N},\partial_{xx}\phi\rangle=\langle\iota_{N}\Delta^{N}J^{N},\phi\rangle=\langle J^{N},\Delta^{N}\iota_{N}^{*}\phi\rangle_{N}=\frac{1}{N}\sum_{k}J_{k}^{N}\bigl(\Delta^{N}\iota_{N}^{*}\phi\bigr)_{k} \leq 2 \|\phi''\|_{\infty}\ \frac{1}{N}\sum_{k}|J_{k}^{N}|.
\]
Using that $\partial_{xx}^{-1}:\L^\infty(\bbT)\to\W^{2,\infty}(\bbT)$ is an isomorphism, we hence obtain
\begin{align*}
\|{\cal I}_{N}J^{N}\|_{\L^{1}(\mathbb{T})} & =\sup_{f\in\L^{\infty}(\mathbb{T}):\|f\|_{\L^\infty\leq 1}}\langle{\cal I}_{N}J^{N},f\rangle=\sup_{\phi\in\W^{2,\infty}(\mathbb{T}):\|\phi\|_{\W^{2,\infty}}\leq1}\langle{\cal I}_{N}J^{N},\partial_{xx}\phi\rangle\leq 2 \|J^{N}\|_{\L^1_N} \,.
\end{align*}
This shows the desired estimate.

Assume now, that $\sup_N\|J^N\|_{\L^1_N}<\infty$. Then, the induced measures $\iota_NJ^N$ and $\mathcal{I}_NJ^N$ are bounded in the space of finite measures. Hence, by the Theorem of Banach-Alaoglu, there exists a subsequence that converge in $\mathcal{M}(\bbT)$. 
To identify the limit, let $\xi\in C^{\infty}(\mathbb{T})$ be fixed.
Similar to the expansion in the beginning of the proof, we can do a third-order Taylor expansion around $x\in[0,1]$ to find
$x_{-}\in[x-\frac{1}{N},x]$ and $x_{+}\in[x,x+\frac{1}{N}]$ such
that 
\[
\xi\bigl(x-\tfrac{1}{N}\bigr)-2\xi(x)+\xi\bigl(x+\tfrac{1}{N}\bigr)
=\tfrac{1}{N^{2}}\xi''(x)+\tfrac{1}{6N^{3}}\bigl( \xi'''(x_{+})-\xi'''(x_{-})\bigr) \,,
\]
which implies that 
\[
(\iota_N^*\partial_{xx} \xi)_k - (\Delta^N\iota_N^*\xi)_k = N \int_{k/N}^{(k+1)/N} \frac{1}{6N}(\xi'''(x_-)-\xi'''(x_+))\d x.
\]
In particular, we get the following commutator estimate
\begin{align}\label{eq:CommutatorEstimate}
\bigl|(\iota_{N}^{*}\partial_{xx}\xi)_{k}-\bigl(\Delta^{N}\iota_{N}^{*}\xi\bigr)_{k}\bigr|
\leq \frac{1}{3N}\|\xi'''\|_{\L^\infty} \,,
\end{align}
which allows us to identitfy the limit, because of
\[
|\langle \iota_NJ^N,\partial_{xx}\xi\rangle - \langle \mathcal{I}_NJ^N,\partial_{xx}\xi\rangle| = |\langle J^N,\iota_N^*\partial_{xx}\xi -\Delta^N\iota_N^*\xi\rangle_N| \leq \frac{1}{N}\sum_{k=1}^N |J^N_k| \frac{1}{3N}\|\xi'''\|_{\L^\infty} \to 0 \,. \qedhere
\]
\end{proof}

\subsection{EDP convergence}\label{s:ConvergenceProof}

We are now in the position to state the detailed version of Result~\ref{result:EDP}.
\begin{theorem}\label{thm:ConvergenceResult}
	Assume the  {activity} $\sigma_\alpha$ satisfies Assumption~\ref{ass:Activity}.\\
	Let $(c^{N},J^{N})\in \CE_N$ be solutions of the discrete continuity equation with $\sup_{N} L_{\alpha,N}(c^N,J^N)<\infty$. 
	Let $\rho^{N}:=\iota_{N}c^{N},j^{N}:={\cal I}_{N}J^{N}$
	be their embeddings.
	Then there exists an absolutely continuous curve
	of densities $\rho\in\W^{1,1}([0,T],\L^{1}(\mathbb{T}))$ and fluxes
	$j\in\L^{1}([0,T],\L^{1}(\mathbb{T}))$ such that $({\rho},j)\in\CE$,
	$\rho^{N}\rightharpoonup\rho\in\W^{1,1}([0,T],\L^{1}(\mathbb{T}))$
	and $j^{N}\wstarlim  j$ in $\mathcal{M}([0,T]\times \mathbb{T})$ up
	to a subsequence.
	Moreover, it holds
	\begin{align*}
		\forall\, t>0:\liminf_{N\to\infty}E_{N}(c^{N}(t))  \geq{\cal E}(\rho(t))
		\qquad\text{and}\qquad 
		\liminf_{N\to\infty}D_{\alpha,N}(c^{N},J^{N})  \geq{\cal D}_\alpha(\rho,j).
	\end{align*}
	If, in addition the curves $c^N$ are EDB solutions to~\eqref{eq:RRE} and have well-prepared initial data, that is $\iota_Nc^N(0)\to\rho(0)$ and $E_N(c^N(0))\to\cE(\rho(0))$, then the limit curve $(\rho,j)$ is an EDI solution to~\eqref{eq:DLSS:alpha}.
\end{theorem}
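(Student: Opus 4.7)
The plan has four steps: compactness, the liminf for the energy, the liminf for the dissipation (the main obstacle), and the EDI conclusion. All necessary ingredients---the cosh estimate \eqref{eq:MagicalEstimate}, the equivalent forms \eqref{eq:RelaxedSlopes} of the relaxed slope, Lemma~\ref{lem:FluxEmbedding}, and the Stolarsky-mean bound \eqref{eq:def:sigma-Aver}---are already in place. From $\sup_N L_{\alpha,N}(c^N,J^N)<\infty$ and $E_N\ge 0$, one immediately obtains $\sup_N D_{\alpha,N}(c^N,J^N)<\infty$. Applied inside $R_{\alpha,N}$, the inequality \eqref{eq:MagicalEstimate} for a suitable $q>1$, combined with the upper bound on $\mAver_\alpha$ from \eqref{eq:MobilityInequality}, yields a uniform bound on $\int_0^T\|J^N\|_{\L^1_N}\dx t$. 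By Lemma~\ref{lem:FluxEmbedding}, the embedded fluxes $j^N=\mathcal{I}_NJ^N$ are bounded in $\mathcal{M}([0,T]\times\bbT)$ and admit a weak-$*$ limit $j$ along a subsequence. Equicontinuity in time of $\rho^N$ with values in $\W^{-2,1}(\bbT)$ follows from $\partial_t\rho^N=\Delta j^N$, while form \eqref{eq:RelaxedSlope.b} of the slope provides uniform $\L^2$- and $\L^4$-bounds on $\Delta^N(c^N)^{\alpha/2}$ and the discrete gradients of $(c^N)^{\alpha/4}$. An Aubin--Lions argument then gives strong $\L^2$-convergence of $(\rho^N)^{\alpha/2}$, hence of $\rho^N$, to $\rho$, and the commutator estimate \eqref{eq:CommutatorEstimate} secures $(\rho,j)\in\CE$ in the limit.

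\textbf{Liminf for the energy and the primal dissipation.} The functional $\cE$ is convex and lower semicontinuous with respect to weak $\L^1$-convergence, so $\liminf_N E_N(c^N(t))\ge\cE(\rho(t))$ is immediate, after extracting a further subsequence for the desired $t$ if necessary. For the primal dissipation I would exploit the joint convexity of $(s,w)\mapsto\CCC(s|w)$, the expansion $\CC(s)=\tfrac12 s^2+O(s^4)$ near $s=0$, and the convergence of the discrete mobility $m_\alpha(c^N_{k-1},c^N_k,c^N_{k+1})\to\rho^\alpha$ (following from the strong convergence of $\rho^N$ and the continuity of $\sigmaAver_\alpha$). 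Standard lower semicontinuity of the convex integrand $j^2/\rho^\alpha$ under the established convergences then delivers
\[
  \liminf_{N\to\infty}\int_0^T R_{\alpha,N}(c^N,J^N)\dx t
  \,\ge\, \int_0^T \cR_\alpha(\rho,j)\dx t .
\]

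\textbf{Liminf for the slope.} This is the central obstacle. Using the Stolarsky-mean identity $(c^N_k)^\alpha-(c^N_{k-1}c^N_{k+1})^{\alpha/2} = \alpha\,\s_\alpha(c^N_k,\sqrt{c^N_{k-1}c^N_{k+1}})^{\alpha-1}(c^N_k-\sqrt{c^N_{k-1}c^N_{k+1}})$ together with the lower bound in \eqref{eq:def:sigma-Aver}, the discrete slope \eqref{eq:DiscreteSlope} is estimated from below by
\[
  S_{\alpha,N}(c^N) \,\ge\, \frac{2}{\alpha^2 N}\sum_{k=1}^N \frac{\bigl(N^2\bigl((c^N_k)^\alpha - (c^N_{k-1}c^N_{k+1})^{\alpha/2}\bigr)\bigr)^2}{\max\{(c^N_k)^\alpha,(c^N_{k-1}c^N_{k+1})^{\alpha/2}\}}.
\]
A Taylor expansion shows that the numerator is a consistent discretization of $\bigl(\rho^{\alpha/2}\Delta\rho^{\alpha/2} - (\partial_x\rho^{\alpha/2})^2\bigr)^2$ and the denominator of $\rho^\alpha$. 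I would then pass to the liminf by combining the strong $\L^2$-convergence of $(\rho^N)^{\alpha/2}$ from the compactness step, the weak convergence of its discrete Laplacian and gradient, and the joint lower semicontinuity of the Benamou--Brenier integrand $u^2/v$, recovering $\cS_\alpha(\rho)$ in the form \eqref{eq:RelaxedSlope.BenamouBrenier}. The degeneracy at $\rho=0$ for $\alpha\ne 2$ is handled by the convention $0/0=0$ together with the lower-semicontinuous relaxation embodied in \eqref{eq:RelaxedSlopes}.

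\textbf{EDI conclusion.} Under the EDB assumption $L_{\alpha,N}(c^N,J^N)=0$ and the well-preparedness $\iota_Nc^N(0)\to\rho(0)$, $E_N(c^N(0))\to\cE(\rho(0))$, adding the liminf inequalities above at $t=T$ yields
\[
  0 \,=\, \lim_{N\to\infty} L_{\alpha,N}(c^N,J^N) \,\ge\, \cE(\rho(T))-\cE(\rho(0))+\cD_\alpha(\rho,j) \,=\, \mathcal{L}_\alpha(\rho,j),
\]
so $(\rho,j)$ is an EDI solution, as desired. The hard part is clearly the slope liminf: the nonlinear composition $c\mapsto c^{\alpha/2}$ under the discrete Laplacian interacts delicately with the degeneracy of the integrand as $\rho\to 0$ when $\alpha\ne 2$, and requires both the Stolarsky-mean algebra of Lemma~\ref{lem:StolarskyInequality} and the joint-convexity machinery in the spirit of \cite{HeinzeMielkeStephan2025}.
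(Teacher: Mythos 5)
Your architecture coincides with the paper's (compactness from flux integrability via the cosh estimate plus an Aubin--Lions argument; energy liminf by lower semicontinuity; slope liminf via the Benamou--Brenier form \eqref{eq:RelaxedSlope.BenamouBrenier} and joint convexity of $u^2/v$; EDI by summation), but the step you flag as routine --- the primal dissipation liminf --- is where your plan breaks down. Since $\CC'(s)=2\operatorname{arsinh}(s/2)=s-\tfrac{1}{24}s^3+\dots$, one has $\CC(s)=\tfrac12 s^2-\tfrac1{96}s^4+\dots\leq\tfrac12 s^2$, hence $\CCC\bigl(N^2J_k\,\big|\,N^4m_\alpha\bigr)\leq J_k^2/(2m_\alpha)$ \emph{pointwise}: the discrete integrand lies \emph{below} its claimed quadratic limit, so the expansion "$\CC(s)=\tfrac12 s^2+O(s^4)$" has the wrong sign for a lower bound, and lower semicontinuity of $j^2/\rho^\alpha$ cannot be invoked; controlling the quartic error would require $\L^4$-bounds on $J^N$ and lower bounds on $m_\alpha$ that are unavailable. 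The paper circumvents this by duality: it proves the \emph{limsup} estimate $\limsup_N\int_0^T R^*_{\alpha,N}(c^N,\Delta^N\iota_N^*\xi)\dx t\leq\int_0^T\cR^*_\alpha(\rho,\Delta\xi)\dx t$ for smooth $\xi$ (using $N^4\CC^*(r/N^2)\leq\tfrac{r^2}{2}\cosh(r/N^2)$ and the commutator estimate \eqref{eq:CommutatorEstimate}), combines it with Fenchel--Young, $\langle J^N,\Delta^N\iota_N^*\xi\rangle_N\leq R^*_{\alpha,N}+R_{\alpha,N}$, and takes the supremum over test functions for the quadratic limit functional. You need this (or an equivalent Mosco-type) argument.

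Two further gaps. In the slope liminf the numerator is, by the exact algebraic identity $N^2(w_{k-1}w_{k+1}-w_k^2)=w_k(\Delta^Nw)_k-(\partial_-^Nw)_k(\partial_+^Nw)_k$ with $w=c^{\alpha/2}$, a sum containing the product $(\partial_-^Nw^N)(\partial_+^Nw^N)$; the merely \emph{weak} convergence of the discrete gradients that you claim does not pass to the limit in this product. The paper establishes \emph{strong} $\L^2$-compactness of $\iota_N\partial_\pm^Nw^N$ (Corollary~\ref{cor:StaticBounds}(B), via a discrete Rellich argument) precisely for this. Second, your compactness step reads the discrete $\L^2$/$\L^4$ bounds off the continuous identity \eqref{eq:RelaxedSlope.b}, but the discrete analogue is not automatic: Lemma~\ref{lem:SpatReg} rests on a nontrivial pointwise polynomial inequality. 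Relatedly, to exploit \eqref{eq:MagicalEstimate} you must show $m_\alpha^q\in\L^1(0,T;\L^1_N)$ for some $q>1$; the energy bound only yields $c^N\in\L^\infty\L^1_N$, and the missing ingredient is the improved space-time integrability $\rho^N\in\L^{\alpha+1}([0,T]\times\bbT)$ of Lemma~\ref{lem:ImprovedRegularity}.
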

In the rest of the section,
we prove Theorem \ref{thm:ConvergenceResult} by deriving first strong compactness for the embedded curves $\rho^N$ and showing then the liminf estimates.

\subsubsection*{Proof of EDP convergence: compactness}

The aim of the section is to derive enough compactness for the family of solutions the discrete gradient-flow equation in order to prove Theorem \ref{thm:ConvergenceResult} and so Result~\ref{result:EDP}. Let $\alpha>0$ and an activity~$\sigma_\alpha$ be fixed such that Assumption~\ref{ass:Activity} is satisfied. Throughout this section, we fix a time horizon~$T>0$. 

Let $[0,T]\ni t\mapsto c^{N}(t)\in\L^1_N(\mathbb{T})$ and $[0,T]\ni t\mapsto J^{N}(t)$
be two trajectories with $(c^N,J^N)\in\CE_N$ such that 
\begin{align}\label{eq:AprioriBounds}
\sup_{N\in\N}D_{\alpha,N}(c^{N},J^{N})\leq C_{\mathrm{diss}},\quad\sup_{N\in\N}\sup_{t\in[0,T]}E_{N}(c^{N}(t))\leq C_{\mathrm{en}}.    
\end{align}
In particular, we have from the definition of $D_{\alpha,N}$ in~\eqref{eq:def:dissipation} the two priori bounds
\begin{align*}
\int_{0}^{T}\frac{1}{N}\sum_{k=1}^NN^2\CCC\bigl(J_{k}^{N}|N^2m_{\alpha}(c_{k-1},c_{k},c_{k+1})\bigr)\dx t & \leq C_{\mathrm{diss}} \;; \\
\int_{0}^{T}\frac{1}{N}\sum_{k=1}^NN^{4}\sigma_\alpha(c_{k-1},c_{k},c_{k+1})\bigl(c_{k}-\sqrt{c_{k-1}c_{k+1}}\bigr)^{2}\dx t & \leq C_{\mathrm{diss}} \;.
\end{align*}
Using the embedding operators from Section \ref{ss:Embeddings}, we define pointwise in time the embedded sequences 
\[
\bigl(\rho^{N}(t):=\iota_{N}c^{N}(t)\bigr)_{N\in\N},\quad\bigl(j^{N}(t):={\cal I}_{N}J^{N}(t)\bigr)_{N\in\N}.
\]
To derive strong relative compactness of $(\rho^N)_{N\in\N}$ in $\L^1(0,T;\bbT)$ we rely on an Aubin-Lions argument. For this, we first derive spatial regularity.

\begin{lemma}[Discrete spatial regularity]
\label{lem:SpatReg} 
For all $c\in\L^1(\bbT_N)$ it holds
\[
S_{\alpha,N}(c) \geq \frac{1}{24\alpha^2} \ \frac1N \sum_{k=1}^N\biggl( \bigl(\Delta^{N}c^{\alpha/2}\bigr)_{k}^{2} +  \bigl(\partial_-^N c^{\alpha/4}\bigr)_{k}^{4} + \bigl(\partial_+^N c^{\alpha/4}\bigr)_{k}^{4} \biggr).
\]
\end{lemma}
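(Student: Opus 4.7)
The plan is to express everything in terms of the auxiliary variable $b_k := c_k^{\alpha/4}$ (so that $c_k^{\alpha/2} = b_k^2$) and then to combine a discrete Taylor-type identity with a summation-by-parts formula that mimics the continuous relation $\int_\bbT b\,(b')^2\,b''\dx x = -\tfrac{1}{3}\int_\bbT (b')^4\dx x$, which is the identity implicit in the proof of Lemma~\ref{lem:RelaxedSlope}.

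First, I would apply the lower bound~\eqref{eq:def:sigma-Aver} of Assumption~\ref{ass:Activity} with $x=c_k$, $y=\sqrt{c_{k-1}c_{k+1}}$. Since $(x^\alpha + y^\alpha)^2 \geq \max\{x^{2\alpha},y^{2\alpha}\}$, the $\min\{x^{-\alpha},y^{-\alpha}\}$-factor cancels and delivers
\[
\sigma_\alpha(c_{k-1},c_k,c_{k+1})\bigl(c_k-\sqrt{c_{k-1}c_{k+1}}\bigr)^2 \geq \tfrac{1}{\alpha^2}\bigl(b_k^2 - b_{k-1}b_{k+1}\bigr)^2,
\]
so that $S_{\alpha,N}(c) \geq \frac{2}{\alpha^2}\frac{1}{N}\sum_k N^4(b_k^2 - b_{k-1}b_{k+1})^2$. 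Next, using $b_{k\pm1} = b_k \pm (\partial_\pm^N b)_k/N$ together with the discrete Leibniz rule $(\Delta^N b^2)_k = 2b_k(\Delta^N b)_k + (\partial_+^N b)_k^2 + (\partial_-^N b)_k^2$, a direct calculation yields the pointwise identity
\[
  2N^2\bigl(b_k^2 - b_{k-1}b_{k+1}\bigr) = \bigl((\partial_+^N b)_k + (\partial_-^N b)_k\bigr)^2 - (\Delta^N b^2)_k,
\]
which is the lattice analog of $2[(b')^2-bb''] = (2b')^2 - \Delta(b^2)$. Squaring this relation and combining $(A-B)^2 \leq 2A^2+2B^2$ with the convexity bound $(u+v)^4\leq 8(u^4+v^4)$ then controls $(\Delta^N b^2)_k^2 = (\Delta^N c^{\alpha/2})_k^2$ pointwise by $N^4(b_k^2 - b_{k-1}b_{k+1})^2$ and $(\partial_\pm^N b)_k^4$.

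The heart of the argument will be to bound the $(\partial_\pm^N b)^4$-terms themselves by $N^4(b_k^2 - b_{k-1}b_{k+1})^2$. To this end, I would telescope $\sum_k \partial_-^N\bigl(b\cdot(\partial_+^N b)^3\bigr)_k = 0$ and use the factorization $u^3 - v^3 = (u-v)(u^2+uv+v^2)$ together with $N\bigl((\partial_+^N b)_k - (\partial_-^N b)_k\bigr) = (\Delta^N b)_k$ to derive the discrete identity
\[
  \sum_{k} (\partial_+^N b)_k^4 = -\sum_k b_k (\Delta^N b)_k\, Q_k, \qquad Q_k := (\partial_+^N b)_k^2 + (\partial_+^N b)_k(\partial_-^N b)_k + (\partial_-^N b)_k^2,
\]
and the analogous one for $(\partial_-^N b)_k^4$ by shift-invariance, using $(\partial_-^N b)_k = (\partial_+^N b)_{k-1}$. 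Substituting $b_k(\Delta^N b)_k = (\partial_+^N b)_k(\partial_-^N b)_k - N^2(b_k^2 - b_{k-1}b_{k+1})$ (a rearrangement of the identity above) and applying Young's inequality to absorb the cross-term $\sum_k (\partial_+^N b)_k(\partial_-^N b)_k Q_k$ produces an estimate $\sum_k\bigl[(\partial_+^N b)_k^4 + (\partial_-^N b)_k^4\bigr] \leq C_\star \sum_k N^4(b_k^2 - b_{k-1}b_{k+1})^2$ for an explicit constant $C_\star$.

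Combining this summed bound with the pointwise estimate for $(\Delta^N b^2)_k^2$ and carefully tracking constants yields the claim with prefactor $\tfrac{1}{24\alpha^2}$. The hardest step will be the control of the cross-term $\sum_k (\partial_+^N b)_k(\partial_-^N b)_k\, Q_k$: in the continuous limit $(\partial_\pm^N b)_k\to b'$ so this cross-term reduces to $3\int (b')^4 \geq 0$, but at the discrete level the product $(\partial_+^N b)_k(\partial_-^N b)_k$ has no definite sign. This loss of pointwise positivity is what prevents a direct transfer of the sharper continuous bound $\int(b')^4 \leq \tfrac{3}{5}\int((b')^2-bb'')^2\dx x$ and is what forces the looser constant $\tfrac{1}{24}$, which nonetheless leaves enough room to close the estimate via Young's inequality.
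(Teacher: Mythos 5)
Your first reduction, $S_{\alpha,N}(c)\geq \frac{2}{\alpha^2}\frac1N\sum_k N^4\bigl(b_k^2-b_{k-1}b_{k+1}\bigr)^2$ with $b_k=c_k^{\alpha/4}$, and your pointwise identity controlling $(\Delta^N b^2)_k^2$ are both correct and coincide with the paper's opening step. The gap sits exactly where you predicted it would be hardest: the claimed bound $\sum_k(\partial_\pm^N b)_k^4\leq C_\star\sum_k N^4(b_k^2-b_{k-1}b_{k+1})^2$ cannot be closed by your telescoping identity plus Young's inequality. Write $p_k=(\partial_+^N b)_k$, $m_k=(\partial_-^N b)_k$, $Q_k=p_k^2+p_km_k+m_k^2$ and $D_k=N^2(b_k^2-b_{k-1}b_{k+1})=p_km_k-b_k(\Delta^N b)_k$. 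Your identity, after using $\sum_k p_k^4=\sum_k m_k^4$, is equivalent to
\[
\frac12\sum_k (p_k+m_k)^2\,(p_k^2+m_k^2)\;=\;\sum_k D_k Q_k,
\]
and the left-hand side is \emph{not} bounded below by $c\sum_k(p_k^4+m_k^4)$ for any $c>0$: on the set $\{p_k=-m_k\ \forall k\}$ it vanishes identically while $\sum_k m_k^4$ does not. At $p_k=-m_k$ one has $Q_k=m_k^2$ and $p_km_kQ_k=-m_k^4$, so the cross-term cancels $\sum_k m_k^4$ \emph{exactly} and leaves zero room for a Young absorption. This is not merely a lost constant: the target inequality is genuinely false for sign-changing $b$ (take $b_k$ alternating between $+1$ and $-1$; then $b_{k-1}b_{k+1}=b_k^2$, so $D_k\equiv 0$ while $(\partial_\pm^N b)_k^4=16N^4$). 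Hence any proof must use $b_k=c_k^{\alpha/4}\geq 0$, and nothing in your chain of algebraic identities or in the final Young step does.

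The paper's proof injects non-negativity in a way your summed argument cannot. It adds the telescoping null form $\frac14\sum_k\bigl[(u_{k+1}-u_k)^3(u_{k+1}+u_k)-(u_k-u_{k-1})^3(u_k+u_{k-1})\bigr]=0$ (a cousin of your $\sum_k\partial_-^N(b\,(\partial_+^N b)^3)=0$, but with the neighbor average in place of $b_k$) and then proves a \emph{pointwise} three-point inequality in $(u_{k-1},u_k,u_{k+1})$. Normalizing $u_k=1$ by homogeneity and parametrizing $u_{k\pm1}=x\mp\sqrt t$, the non-negativity of the neighbors appears as the admissible region $x\geq 0$, $t\in[0,x^2]$, on which the resulting quadratic-in-$t$ polynomial $S(x,t)$ is shown to be non-negative; your alternating counterexample corresponds to $x=-1$, where the pointwise inequality indeed fails. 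To rescue your route you would have to keep the relation $D_k=p_km_k-b_kN(p_k-m_k)$ with $b_k\geq0$ at the pointwise level rather than only through the summed identity, which essentially forces you back to a three-point pointwise inequality of the paper's type.
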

\begin{proof}
The assumption on $\sigmaAver_\alpha$ from  \eqref{eq:def:sigma-Aver} allows us to obtain a lower bound on the slope. Indeed, we have
\begin{align*}%
    S_{\alpha, N}(c) 
 \geq \frac{1}{N}\sum_{k=1}^N  \frac{N^{4}}{\alpha^2} \frac{(c_{k}^\alpha-\sqrt{c_{k-1}c_{k+1}}^\alpha)^2}{\max\{c_{k}^\alpha,\sqrt{c_{k-1}c_{k+1}}^\alpha\}} \geq \frac{1}{N}\sum_{k=1}^N  \frac{N^{4}}{\alpha^2} \bigl(c_{k}^{\alpha/2}-\sqrt{c_{k-1}c_{k+1}}^{\alpha/2}\bigr)^{\!2} \,,
\end{align*}
where in the second inequality, we have used that $\frac{\bigl(c_k^{\alpha/2} + \sqrt{c_{k-1}^{\alpha/2} c_{k+1}^{\alpha/2}}\bigr)^2}{\max\{c_k^\alpha,\sqrt{c_{k-1}^\alpha c_{k+1}^\alpha}\}}\geq 1$.
Introducing $w_{k}:=c_{k}^{\alpha/2}$, we need to show that 
\[
\sum_{k=1}^NN^4\bigl(w_{k}-\sqrt{w_{k-1}w_{k+1}}\bigr)^{2}\geq \frac{1}{24}\sum_{k=1}^N\biggl((\Delta^{N}w)_{k}^{2} + \bigl(\partial_-^N c^{\alpha/4}\bigr)_{k}^{4} + \bigl(\partial_+^N c^{\alpha/4}\bigr)_{k}^{4}  \biggr).
\]
We use discrete partial integration to observe that
\[
\sum_{k=1}^N \pra[\big]{ (\sqrt{w_{k+1}}-\sqrt{w_{k}})^{3}(\sqrt{w_{k+1}}+\sqrt{w_{k}})-(\sqrt{w_{k}}-\sqrt{w_{k-1}})^{3}(\sqrt{w_{k}}+\sqrt{w_{k-1}})} =0.
\]
Adding this to the left-hand side, writing $u_{k}=\sqrt{w_{k}}$, it hence suffices to show that for all $k\in [N] $ we have 
\begin{multline*}
(u_{k}^{2}-u_{k+1}u_{k-1})^{2}+\frac{1}{4}\bigl\{ (u_{k+1}-u_{k})^{3}(u_{k+1}+u_{k})
-(u_{k}-u_{k-1})^{3}(u_{k-1}+u_{k})\bigr\}\\
\geq \frac{1}{24}\biggl((u_{k-1}^{2}-2u_{k}^{2}+u_{k+1}^{2})^{2}+(u_{k-1}
     -u_{k})^{4}+(u_{k+1}-u_k)^{4}\biggr).
\end{multline*}

First, we use that both sides of the inequality are 1-homogeneous and so we may 
set $u_{k}=1$. Moreover, introducing the parametrization  $u_{k-1}=x+\sqrt{t}$ 
and $u_{k+1}=x-\sqrt{t}$  with $x\geq0$ and $t\in[0,x^{2}]$, putting all terms on 
one side, and rearranging, a straight-forward computation shows that it suffices 
to show that 
\[
\forall\, x\geq 0\ \forall\, t\in[0,x^2]: \ \ S(x,t):=15 t^2 + 2 t (x{-}11) 
 (x{-}1 ) + (x{-}1)^2 (3 {+} 22 x {+} 15 x^2) \geq 0.
\]

We observe $S(x,0)\geq 0$ for all $x\geq 0$. 
For fixed $x>0$ we minimize over
$t\in [0,x^2]$ and obtain $S(x,t)\geq S(x,0)$ for $x \in [0,1]\cup [11,\infty)$,
as the prefactor of the term for $t^1$ is non-negative for these $x$ {: indeed,
for $x\in[0,1]$ both factors $(x{-}11)$ and $(x{-}1)$ are non-positive, while for
$x\in[11,\infty)$ both are non-negative; hence $2(x{-}11)(x{-}1)\geq0$ in both
cases, and the minimum of the quadratic $t\mapsto S(x,t)$ over $t\geq0$ is therefore
attained at $t=0$}. 

For $x\in (1,11)$ we use that $t\mapsto S(x,t)$ is quadratic,  hence
its minimum is given at $t_{x}=\frac{1}{15}(11{-}x)(x{-}1) \in [0,x^2]$.
Inserting the critical value into $S$ we find
\[
S(x,t_{x})=\frac{1}{45}(x-1)^{2}(56x^{2}+88x-19)\geq0,
\]
for $x\geq1$. Hence, we conclude that $S(x,t)\geq0$, which proves the inequality. 
\end{proof}
The discrete spatial regularity has important consequences for the embedded sequence $(\rho^N)_{N\in\N}$ as it provides a uniform spatial $\L^\infty$-bound and spatial regularity. We summarize static properties next, which are later also needed for time-integrated bounds.

\begin{corollary}\label{cor:StaticBounds}
Let $(c^N)_{N\in\N}$ satisfy $\sup_{N\in\N}\bigl\{S_{\alpha,N}(c^N)+ \|c^N\|_{\L^\alpha_N}\bigr\} <\infty$. Then:

(A) The sequence $\rho^N=\iota_Nc^N$ is uniformly bounded in $\L^\infty(\bbT)$;

(B) The sequence $ \iota_N\partial_\pm^N(c^{N})^{\alpha/2}$ is compact in $\L^2(\bbT)$;

(C) We have $\|(\rho^N)^{\alpha/4}\|_{\mathrm{BV}(\bbT)}\leq \|\partial_-^N(c^N)^{\alpha/4}\|_{\L^4_N}$.
\end{corollary}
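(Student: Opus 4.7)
The plan is to combine Lemma \ref{lem:SpatReg} with the assumed $\L^\alpha_N$-bound on $c^N$ and to transfer the resulting discrete Sobolev-type information to continuous embeddings via piecewise-linear interpolation. Setting $w^N := (c^N)^{\alpha/4}$ and $v^N := (c^N)^{\alpha/2}$, the hypotheses together with Lemma \ref{lem:SpatReg} yield uniform bounds on $\|\Delta^N v^N\|_{\L^2_N}$, on $\|\partial_\pm^N w^N\|_{\L^4_N}$, and, since $\|w^N\|_{\L^4_N}^4 = \|c^N\|_{\L^\alpha_N}^\alpha$, also on $\|w^N\|_{\L^4_N}$.

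Claim (C) is the most elementary: $\iota_N w^N$ is piecewise constant, so its total variation equals the sum of its jumps,
\[
\|\iota_N w^N\|_{\BV(\bbT)} = \sum_{k=1}^{N}\bigl|w^N_k - w^N_{k-1}\bigr| = \|\partial_-^N w^N\|_{\L^1_N},
\]
and Jensen's inequality gives $\|\cdot\|_{\L^1_N} \leq \|\cdot\|_{\L^4_N}$ on the underlying probability space.

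For (A) I would introduce the piecewise-linear interpolant $\widetilde w^N$ of $w^N$ on the uniform grid; a short computation yields $\|\widetilde w^N\|_{\L^4(\bbT)} \lesssim \|w^N\|_{\L^4_N}$ and $\|\partial_x \widetilde w^N\|_{\L^4(\bbT)} = \|\partial_+^N w^N\|_{\L^4_N}$, so $\widetilde w^N$ is bounded in $\W^{1,4}(\bbT)$. The one-dimensional Sobolev embedding $\W^{1,4}(\bbT) \hookrightarrow \L^\infty(\bbT)$ then supplies a uniform sup-norm bound on $\widetilde w^N$, hence on every grid value $w^N_k$; raising to the power $4/\alpha$ yields the claimed $\L^\infty(\bbT)$-bound on $\rho^N$.

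For (B) the crucial identity is the discrete chain rule
\[
(\partial_+^N v^N)_k = (w^N_k + w^N_{k+1})\,(\partial_+^N w^N)_k,
\]
which together with (A) and the $\L^4_N$-bound on $\partial_+^N w^N$ produces a uniform bound on $\|\partial_\pm^N v^N\|_{\L^2_N}$. Combined with $\|\Delta^N v^N\|_{\L^2_N} = \|\partial_+^N\partial_-^N v^N\|_{\L^2_N}$ being bounded, the sequence $g^N := \partial_\pm^N v^N$ is uniformly controlled, along with its own discrete forward difference, in $\L^2_N$. Interpolating $g^N$ piecewise linearly to $\widetilde g^N$ produces a family bounded in $\rmH^1(\bbT)$, which by Rellich--Kondrachov is precompact in $\L^2(\bbT)$; and the routine estimate $\|\iota_N g^N - \widetilde g^N\|_{\L^2(\bbT)} \lesssim N^{-1}\|\partial_+^N g^N\|_{\L^2_N}$ transfers this compactness to $\iota_N g^N$. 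The main obstacle I anticipate is part (B), specifically the bookkeeping needed to pass from the discrete second-order bound on $v^N$ to genuine $\L^2$-compactness of its first differences on $\bbT$; parts (A) and (C) are essentially one-line Sobolev-embedding and Jensen arguments.
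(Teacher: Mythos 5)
Your proof is correct and follows the same overall strategy as the paper: everything rests on the lower bound of Lemma \ref{lem:SpatReg} together with the $\L^\alpha_N$-bound, and parts (A) and (C) are implemented essentially as in the paper (for (C) the paper works through the duality definition of the $\BV$-seminorm with $C^1$ test functions rather than summing the jumps of the piecewise-constant embedding, but this is the same computation, finished by the same Jensen step $\|\cdot\|_{\L^1_N}\le\|\cdot\|_{\L^4_N}$). The one place where you genuinely deviate is part (B). The paper obtains the uniform $\L^2_N$-bound on $\partial_\pm^N (c^N)^{\alpha/2}$ from the $\L^2_N$-bound on $\Delta^N (c^N)^{\alpha/2}$ via a discrete Poincar\'e inequality (using that the first differences have zero mean), and then gets compactness by showing that an $\L^2_N$-bound on a discrete derivative forces H\"older-$1/2$ equicontinuity of the piecewise-constant embeddings, concluding with Arzel\`a--Ascoli. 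You instead use the discrete product rule $\partial_+^N(w^2)=(w+w_+)\,\partial_+^N w$ together with the $\L^\infty$-bound from part (A) to control the first differences, and then pass through the piecewise-linear interpolant and Rellich--Kondrachov, transferring compactness back via the $O(N^{-1})$ discrepancy between the two interpolants. Both routes are sound; yours makes (B) depend on (A) (the paper's does not) but avoids the discrete Poincar\'e inequality, and the interpolation-plus-Rellich mechanism is a perfectly standard substitute for the paper's hand-made discrete Rellich theorem.
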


\begin{proof}
    Let $u^N:=(c^N)^{\alpha/4}$ and $w^N:=(c^N)^{\alpha/2}$. The bound on $c^N\in\L^\alpha_N$ implies the bounds $w^N\in\L^2_N$ and $u_N\in\L^4_N$. From the assumptions in combination with Lemma \ref{lem:SpatReg}, we conclude that 
    \[
    \sup_{N\in\N}\|u^N\|_{\W^{1,4}_N}=\sup_{N\in\N}\bigl\{\|\iota_N\partial_\pm u^N\|_{\L^4} + \|\iota_Nu^N\|_{\L^4}\bigr\}<\infty,
    \]
    which implies that all sequences $\iota_Nu^N,\iota_Nw^N,\rho^N$ are uniformly bounded in $\L^\infty(\bbT)$ by $\W^{1,4}_N\subset\L^\infty_N$, the discrete analog of the continuous embedding $\W^{1,4}(\bbT)\subset\L^\infty(\bbT)$. This proves (A).
    
    The bound on the slope also provides the uniform bound $\iota_N\Delta^N w^N\in\L^2$. Since $\partial_\pm^Nw^N$ has mean-average zero, we conclude by a discrete Poincaré inequality the uniform bound $\iota_N\partial_\pm^Nw^N\in\L^2$. Compactness of that sequence in $\L^2$ now follows from a discrete Rellich theorem. Indeed, by the Arzela-Ascoli theorem it suffices show that $\omega^N=\iota_Nw^N$ is Hölder-$1/2$ continuous if $\sup_N\|\partial_-^N w^N\|_{\L^2_N}\leq C$. 
    To show this, we let $x,y$ such 
    that $x\in[k/N,k+1/N[$, $y\in[l/N,l+1/N[$ and estimate
    \[
    \bigl\lvert\omega^N(x)-\omega^N(y)\bigr\rvert
   	\leq \bigl\lvert\iota_Nw^N(x)-\iota_Nw^N(y)\bigr\rvert = |w_k-w_l|  = \sum_{j=k}^{l-1}|w_j-w_{j+1}| \,.
   	\]
    By the Cauchy-Schwartz inequality and introducing factors of $\sqrt{N}$, we conclude
    \[\begin{split}
    	\bigl\lvert\omega^N(x)-\omega^N(y)\bigr\rvert
    	&\leq \biggl(\sum_{j=k}^{l-1}N|w^N_j-w^N_{j+1}|^2 \biggr)^{\!1/2} \biggl(\sum_{j=k}^{l-1}\frac 1 N \biggr)^{\!1/2}\\
    	& \leq \lVert\partial_-w\rVert_{\L^2_N} \biggl(\frac{l-1-k}{N}\biggr)^{1/2}\leq\|\partial_-w^N\|_{\L^2_N}|x-y|^{1/2},
    \end{split}\]
    which proves (B).

    For (C), we recall that the BV-norm is defined in duality with functions $\phi\in C^1(\bbT)$ with $\|\phi\|_{\L^\infty(\bbT)}\leq 1$. We observe that for all $h\in]0,1/N[$ we have
\begin{align}\label{eq:31}
    \int_{\bbT} \iota_N u(x) \frac{\phi(x+h)-\phi(x)}{h}\dx x \leq \frac 1 h \|\phi\|_{\L^\infty(\bbT)} \int_{\bbT} |\iota_Nu(x-h)-\iota_Nu(x)|\dx x. 
\end{align}
Since 
\[\begin{split}
|\iota_Nu(x-h)-\iota_Nu(x)| &= \biggl|\sum_{k=1}^Nu^N_k\one_{[k/N,k+1/N[}(x-h)-u^N_k\one_{[k/n,k+1/N[}(x)\biggr| \\
&\leq \sum_{k=1}^N|u^N_k-u^N_{k-1}|\one_{[k/N,k + h[}(x) \,,
\end{split}\]
we get from \eqref{eq:31}, that
\begin{align*}
\MoveEqLeft\int_{\bbT} \iota_N u^N(x)\frac{\phi(x+h)-\phi(x)}{h}\dx x \leq  \frac 1 h \|\phi\|_{\L^\infty(\bbT)} \int_{\bbT} \sum_{k=1}^N|u^N_k-u^N_{k-1}|\one_{[k/N,k/N + h[} \dx x \\
\leq &  \|\phi\|_{\L^\infty(\bbT)}  \sum_{k=1}^N|u^N_k-u^N_{k-1}| 
=    \|\phi\|_{\L^\infty(\bbT)} \|\partial_-^Nu^N\|_{\L^1_N} \leq \|\phi\|_{\L^\infty(\bbT)} \|\partial_-^Nu^N\|_{\L^4_N}.     
\end{align*}
Taking the limit $h\to 0$, we conclude by dominated convergence that
\[
\int_{\bbT} \iota_Nu^N(x) \phi'(x) \dx x\dx t \leq   \|\phi\|_{\L^\infty(\bbT)} \|\partial_-^Nu^N\|_{\L^4_N}.
\]
Taking the supremum over $\phi\in C^1(\bbT)$ with $\|\phi\|_{\L^\infty(\bbT)}\leq 1$ and noting that $(\rho^N)^{\alpha/4}=(\iota_Nc^N)^{\alpha/4} =\iota_N(c^N)^{\alpha/4}  = \iota_Nu_N$, we conclude that 
\[
\|(\rho^N)^{\alpha/4}\|_{\BV(\bbT)} =  \|\iota_Nu^N\|_{\BV(\bbT)} \leq \|\partial_-^Nu^N\|_{\L^4_N}.\qedhere
\]
\end{proof}

\begin{lemma}[Improved integrability]\label{lem:ImprovedRegularity}
    Let the family of trajectories $c^N\in\L^1(0,T;\L^1(\bbT))$ satisfy the a priori bounds \eqref{eq:AprioriBounds}. Then, there exists an exponent $\qcrit:=\alpha+1$ such that
    \[
    \sup_{N\in\N}\|\rho^N\|_{\L^{\qcrit}([0,T]\times\bbT)}<\infty.
    \]
\end{lemma}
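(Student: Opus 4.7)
The plan is to derive a time-integrable pointwise $\L^\infty$ bound on $\rho^N$ and then interpolate it against the conserved mass $\|\rho^N(t)\|_{\L^1(\bbT)}=1$ to obtain the integrability in $\L^{\qcrit}$ with $\qcrit=\alpha+1$. Setting $u^N(t):=(\rho^N(t))^{\alpha/4}$, I first combine the dissipation bound in \eqref{eq:AprioriBounds} with Lemma~\ref{lem:SpatReg} to obtain
\begin{equation*}
\int_0^T \|\partial_-^N(c^N(t))^{\alpha/4}\|_{\L^4_N}^4 \dx t \leq C_1,
\end{equation*}
which, via Corollary~\ref{cor:StaticBounds}(C) applied pointwise in $t$, yields $\mathrm{TV}(u^N(t)) \leq \|\partial_-^N(c^N(t))^{\alpha/4}\|_{\L^4_N}$ for a.e.\ $t$.

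Next, I would exploit the one-dimensional embedding $\BV(\bbT)\hookrightarrow \L^\infty(\bbT)$ in the elementary form $\|u\|_{\L^\infty(\bbT)} \leq \|u\|_{\L^1(\bbT)}+\mathrm{TV}(u)$. The $\L^1$-norm of $u^N$ is harmless when $\alpha\leq 4$, since concavity of $x\mapsto x^{\alpha/4}$ combined with Jensen's inequality and mass conservation gives $\int_\bbT (\rho^N)^{\alpha/4}\dx x \leq 1$. For $\alpha>4$ I would use instead the trivial bound $\int_\bbT (\rho^N)^{\alpha/4}\dx x = \int_\bbT (\rho^N)^{\alpha/4-1}\rho^N\dx x \leq \|\rho^N\|_{\L^\infty}^{\alpha/4-1}$ and perform an absorption argument: writing $M:=\|\rho^N(t)\|_{\L^\infty}$ and $K:=\|\partial_-^N(c^N(t))^{\alpha/4}\|_{\L^4_N}$, the inequality $M^{\alpha/4}\leq M^{\alpha/4-1}+K$ implies either $M\leq 2$ or $M^{\alpha/4-1}\leq M^{\alpha/4}/2$ so that the subdominant term absorbs and we obtain $M^{\alpha/4}\leq 2K$. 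In both cases, and for both ranges of $\alpha$, this yields
\begin{equation*}
\|\rho^N(t)\|_{\L^\infty(\bbT)}^{\alpha} \leq C_2\bigl(1+\|\partial_-^N(c^N(t))^{\alpha/4}\|_{\L^4_N}^4\bigr) \qquad \text{for a.e. } t\in[0,T].
\end{equation*}

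Integrating in time and using the estimate from the first step gives $\int_0^T \|\rho^N(t)\|_{\L^\infty(\bbT)}^\alpha \dx t \leq C_3$ uniformly in $N$. The conclusion then follows from the interpolation
\begin{equation*}
\|\rho^N(t)\|_{\L^{\alpha+1}(\bbT)}^{\alpha+1} = \int_\bbT (\rho^N)^{\alpha+1}\dx x \leq \|\rho^N(t)\|_{\L^\infty(\bbT)}^\alpha \, \|\rho^N(t)\|_{\L^1(\bbT)} = \|\rho^N(t)\|_{\L^\infty(\bbT)}^\alpha,
\end{equation*}
where mass conservation is used in the last equality; integrating in time yields the claim with $\qcrit=\alpha+1$. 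The only subtle point is the case $\alpha>4$, where Jensen's inequality runs the wrong way so that the $\L^1$-norm of $(\rho^N)^{\alpha/4}$ is not directly controlled by mass conservation, but this is cleanly handled by the absorption argument above, which is what fixes the critical exponent precisely at $\alpha+1$.
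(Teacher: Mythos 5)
Your proof is correct and follows essentially the same route as the paper: both derive a uniform bound on $\rho^N$ in $\L^\alpha([0,T];\L^\infty(\bbT))$ by combining the slope bound from Lemma~\ref{lem:SpatReg} with a one-dimensional embedding of $(\rho^N)^{\alpha/4}$ into $\L^\infty$, and then interpolate against the $\L^\infty\L^1$ bound to reach the exponent $\alpha+1$. The only difference is cosmetic: you absorb the lower-order $\L^1$-term of $(\rho^N)^{\alpha/4}$ via Jensen (for $\alpha\le 4$) and a direct absorption argument (for $\alpha>4$), whereas the paper handles all $\alpha>0$ at once with a H\"older--Young interpolation in a small parameter $\gamma$; both are valid.
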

\begin{proof}
\smallskip
\noindent
\underline{\emph{Step 1.}} We set $u^N:=(c^N)^{\alpha/4}$ and $\upsilon^N:=\iota_Nu^N$. 
We first show, that $\upsilon^N$ is uniformly bounded in $\L^4\L^\infty$.
To arrive at the claim, we observe the static estimate
\begin{align}\label{eq:StaticHoelder}
   \|u^N\|_{\L^\infty_N} \leq \|u^N\|_{\L^1_N} + \|\partial_\pm^Nu^N\|_{\L^1_N},    
\end{align}
which is the discrete analog of the continuous embedding $\W^{1,1}\subset\L^\infty$.
Now, let $\gamma>0$ sufficiently small. Using Hölder's inequality with exponents 
$q,q^*\geq 1$, i.e. $1/q+1/q^*=1$, we have
\[
    \|\upsilon^N\|_{\L^1} = \int_\bbT (\upsilon^N)^{1-\gamma}(\upsilon^N)^{\gamma}\dx x \leq \biggl(\int_\bbT (\upsilon^N)^{(1-\gamma)q^*}\d x\biggr)^{\! 1/q^*}\biggl(\int_\bbT(\upsilon^N)^{\gamma q}\d x\biggr)^{\! 1/q}.
\]
Choosing, $\gamma q=4/\alpha$, which is possible for $\gamma<4/\alpha$, the second 
term is just $\biggl(\int_\bbT(\upsilon^N)^{\gamma q}\d x\biggr)^{1/q}
=\|\rho^N\|_{\L^1}^{\alpha\gamma/4}$. For this $q$ we have 
$q^*=4/(4-\alpha\gamma)$, and hence we conclude
    \[
    \|\upsilon^N\|_{\L^1} \leq \|\upsilon^N\|^{1-\gamma}_{\L^p}\cdot  \|\rho^N\|_{\L^1}^{\alpha\gamma/4}, \quad \text{with} \quad p>\max\Bigl\{\frac{4-4\gamma}{4-\alpha\gamma},1\Bigr\}.
    \]
    Note that for all $\alpha>0$, there exists $\gamma\in]0,4/\alpha[$ such that such a $p>1$ exists.
    The last estimate implies with Young's inequality that
    \[
    \|\upsilon^N\|_{\L^1} \leq \|\upsilon^N\|^{1-\gamma}_{\L^\infty}\cdot  \|\rho^N\|_{\L^1}^{\alpha\gamma/4} \leq (1-\gamma)\|\upsilon^N\|_{\L^\infty} + \gamma\|\rho^N\|_{\L^1}^{\alpha/4}. 
    \]
    Inserting this into \eqref{eq:StaticHoelder}, we hence obtain
\[
\|\upsilon^N\|_{\L^\infty_N} \leq \|\upsilon^N\|_{\L^1_N}
  + \|\partial_\pm^Nu^N\|_{\L^1_N} \leq (1-\gamma)\|\upsilon^N\|_{\L^\infty}
  + \gamma\|\rho^N\|_{\L^1}^{\alpha/4}+ \|\partial_\pm^Nu^N\|_{\L^1_N}.
\]
Hence, we conclude that $\|\upsilon^N\|_{\L^\infty_N} 
\leq \|\rho^N\|_{\L^1}^{\alpha/4}+ \frac 1 \gamma \|\partial_\pm^Nu^N\|_{\L^1_N}
\leq  \|\rho^N\|_{\L^1}^{\alpha/4}+ \frac 1 \gamma \|\partial_\pm^Nu^N\|_{\L^4_N}$,
where in the last inequality we have used Jensen's inequality. Since $\rho^N$ is 
uniformly bounded in $\L^\infty\L^1$ by the bound on the energy  
\eqref{eq:AprioriBounds}, and, in addition, the discrete spatial regularity 
in Lemma \ref{lem:SpatReg} which provides $\|\partial_\pm^N u^N\|_{\L^4\L^4_N} 
\leq C_\mathrm{diss}$, we conclude that $\upsilon^N$ is uniformly bounded 
in $\L^4\L^\infty$.
\smallskip

\underline{\emph{Step 2.}} To prove the statement, we now recall the classical interpolation result for Lebesgue spaces, i.e. for $\L^{q_1}(\bbT),\L^{q_2}(\bbT)$ with $q_1,q_2\geq 1$ and $p_1,p_2\geq 1$ and $\theta\in[0,1]$ we have
    \[
    \bigl[\L^{p_1}([0,T],\L^{q_1}(\bbT)),\L^{p_2}([0,T],\L^{q_2}(\bbT))\bigr]_\theta \simeq \L^{p_\theta}([0,T],[\L^{q_1}(\bbT),\L^{q_2}(\bbT)]_\theta) \simeq \L^{p_\theta}([0,T],\L^{q_\theta}(\bbT)),
    \]
    where $\frac{1}{p_\theta} = \frac{1-\theta}{p_1} + \frac{\theta}{p_2}$ and  $\frac{1}{q_\theta} = \frac{1-\theta}{q_1} + \frac{\theta}{q_2}$.
    Let first $\alpha\in]0,4]$. By the time-uniform bound on $\rho^N$, we conclude the uniform bound $\upsilon^N\in\L^\infty\L^{4/\alpha}$. Hence, we conclude that $u^N\in\L^\infty\L^{4/\alpha}\cap\L^4\L^\infty\subset\L^{4+4/\alpha}([0,T],\bbT)$, which implies that $\rho^N = (u^N)^{4/\alpha}\in\L^{1+\alpha}([0,T]\times\bbT)$ is uniformly bounded. For $\alpha\geq 4$, we observe that $\upsilon^N\in\L^4\L^\infty$ is equivalent to $\rho^N=(\upsilon^N)^{4/\alpha}\in\L^\alpha\L^\infty$. Hence, in addition with the uniform energy bound, we have $\rho^N\in\L^\infty\L^1\cap\L^\alpha\L^\infty\subset \L^{\alpha+1}([0,T]\times\bbT)$. This proves the claim.
\end{proof}
The time regularity of the curve will be implied by suitable integrability of the fluxes. 
Hence, we first show compactness for the
fluxes by uniform integrability. Here the improved regularity from Lemma~\ref{lem:ImprovedRegularity} and the estimate~\eqref{eq:MagicalEstimate} is crucial. 
The proof is analogously to~\cite{HeinzeMielkeStephan2025}.
\begin{lemma}[Uniform integrability of fluxes]\label{lem:UniIntFluxes}
For the admissible curves $(c^{N},J^{N})$,
there is a constant $C_{\mathrm{flux}}>0$, such that for all $N\in\N$ we have 
\[
\frac 1 N \sum_{k=1}^N \int_0^T\CC(J^N_k) \d t \leq C_\mathrm{flux}.
\]
Moreover, there exists a curve of fluxes $j\in\L^1(0,T;\bbT)$ with $\int_0^T\int_\bbT\CC(j)\d x \d t \leq C_\mathrm{flux}$ and $\iota_NJ^N \rightharpoonup j$ weakly in $\L^1(0,T;\bbT)$ (up to a subsequence). In addition, $\mathcal{I}_NJ^N=j^N$ converges in the space $\mathcal{M}([0,T]\times\bbT)$ (up to a subsubsequence) also to $j\in\L^1([0,T]\times\bbT)$.
\end{lemma}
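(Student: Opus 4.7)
The plan is to bootstrap the dissipation control on $\CCC(J^N_k | m_\alpha)$ into a uniform bound on $\CC(J^N_k)$ via the magical estimate \eqref{eq:MagicalEstimate}, then extract a weak $\L^1$-limit via Dunford--Pettis using the superlinearity of $\CC$, and finally identify it with the $\mathcal{M}$-limit of $\mathcal{I}_N J^N$ through Lemma~\ref{lem:FluxEmbedding}.

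First I would descale the dissipation: since $R_{\alpha,N}(c,J) = \frac{1}{N}\sum_k \CCC(N^2 J_k | N^4 m_\alpha)$, the monotonicity \eqref{eq:Monotonicity} with $\lambda = N^2 \geq 1$ versus $\lambda = 1$ gives $\CCC(N^2 J_k | N^4 m_\alpha) \geq \CCC(J_k | m_\alpha)$, and integrating yields
\[
 \int_0^T \tfrac{1}{N}\sum_{k=1}^N \CCC\bigl(J_k^N \,\big|\, m_\alpha(c_{k-1}^N, c_k^N, c_{k+1}^N)\bigr)\,\dd t \leq C_{\mathrm{diss}}.
\]
Applying \eqref{eq:MagicalEstimate} pointwise with the critical exponent $q = \qcrit/\alpha = (\alpha{+}1)/\alpha$ (so $\tfrac{q}{q-1} = \alpha{+}1$ and $\tfrac{4}{q-1} = 4\alpha$) gives
\[
 \CC(J_k^N) \leq (\alpha{+}1)\,\CCC(J_k^N | m_\alpha) + 4\alpha\, m_\alpha^{(\alpha+1)/\alpha}.
\]
Invoking the mobility upper bound from \eqref{eq:MobilityInequality}, $m_\alpha \leq \max\{c_k^\alpha, (c_{k-1}c_{k+1})^{\alpha/2}\}$, together with Young's inequality, yields $m_\alpha^{(\alpha+1)/\alpha} \lesssim c_{k-1}^{\alpha+1} + c_k^{\alpha+1} + c_{k+1}^{\alpha+1}$, whose space-time sum is controlled by Lemma~\ref{lem:ImprovedRegularity}. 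Summing and integrating produces the first assertion $\int_0^T \frac{1}{N}\sum_k \CC(J_k^N)\,\dd t \leq C_{\mathrm{flux}}$.

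For the compactness part, I observe that $\CC(s) \sim |s|\log|s|$ is superlinear and use the norm preservation $\int_\bbT \CC(\iota_N J^N)\,\dd x = \frac{1}{N}\sum_k \CC(J_k^N)$. The de la Vallée Poussin criterion then furnishes equi-integrability of $(\iota_N J^N)_N$ in $\L^1([0,T]\times\bbT)$, and Dunford--Pettis extracts $\iota_N J^N \rightharpoonup j$ weakly in $\L^1$ along a subsequence. Lower semicontinuity of the convex integral $\int\CC(\cdot)$ under weak $\L^1$-convergence yields the bound on $\int_0^T\!\!\int_\bbT \CC(j)\,\dd x\dd t$. Since the superlinearity of $\CC$ also controls $\int_0^T \|J^N(t)\|_{\L^1_N}\,\dd t$, the time-integrated version of Lemma~\ref{lem:FluxEmbedding} (proved by the same commutator estimate \eqref{eq:CommutatorEstimate} applied to smooth space--time test functions) identifies the $\mathcal{M}([0,T]\times\bbT)$-limit of $\mathcal{I}_N J^N$ with the weak $\L^1$-limit $j$.

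The main obstacle is calibrating $q$: the exponent $q = (\alpha{+}1)/\alpha$ is the unique choice making $m_\alpha^q \lesssim c^{\alpha q} = c^{\qcrit}$ marginally integrable by Lemma~\ref{lem:ImprovedRegularity}. Any larger $q$ would demand integrability of $\rho^N$ beyond $\L^{\alpha+1}$ (which is not available), while \eqref{eq:MagicalEstimate} requires $q > 1$; in this sense the scaling of the dissipation and the improved integrability are precisely matched.
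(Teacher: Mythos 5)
Your proposal is correct and follows essentially the same route as the paper: descaling via the monotonicity \eqref{eq:Monotonicity}, splitting with \eqref{eq:MagicalEstimate}, controlling $m_\alpha^q$ through the mobility upper bound and the $\L^{\alpha+1}$-integrability of Lemma~\ref{lem:ImprovedRegularity}, then de la Vall\'ee Poussin for the weak $\L^1$ limit and Lemma~\ref{lem:FluxEmbedding} for the identification with the measure limit of $\mathcal{I}_N J^N$. Your explicit calibration $q=(\alpha{+}1)/\alpha$ in fact makes precise the exponent the paper leaves implicit in its phrase ``choosing $q=p$ from Lemma~\ref{lem:ImprovedRegularity}''.
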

\begin{proof}
By the estimate \eqref{eq:MagicalEstimate}, we have 
\[
\CC(J^N) \leq \frac{q}{q-1} \CCC\bigl(J^N \big|m_\alpha(c_{k-1},c_k,c_{k+1})\bigr)+ \frac{4 m_\alpha^q}{q-1}.
\]
Using Lemma \ref{lem:StolarskyInequality}, we estimate
\[
m_\alpha = \sigma_{\sigma}(c_{k-1}^{N},c_{k}^{N},c_{k+1}^{N})\Bigl(c_{k}^{N}\sqrt{c_{k-1}^{N}c_{k+1}^{N}}\Bigr)\leq \max\bigl\{c_{k}^{\alpha},\sqrt{c_{k-1}c_{k+1}}^{\alpha}\bigr\} \leq c_{k-1}^{\alpha}+ c_{k}^{\alpha}+c_{k+1}^{\alpha}.
\]
Choosing $q=p$ from Lemma \ref{lem:ImprovedRegularity} together with a discrete
Hölder inequality, we get that $m_\alpha^q \in\L^1(0,T;\L^1_N)$.
Moreover, we have by monotonicity \eqref{eq:Monotonicity}
\[
\CCC(J^N|m_\alpha(c_{k-1},c_k,c_{k+1}) \leq \CCC(N^2J^N_k|N^4 m_\alpha).
\]
Together this implies that
\[
\frac 1 N \sum_{k=1}^N \int_0^T\CC(J^N) \d t \leq \frac 1 N \sum_{k=1}^N \int_0^T \CCC(N^2J^N_k|N^4 m_\alpha) \d t + \|m_\alpha^q\|_{\L^1\L^1_N} \leq C_\mathrm{diss} + C \|c^N\|_{\L^{\alpha q}\L^{\alpha q}}<\infty \,,
\]
and proves the first claim. 

For the embedded fluxes we esimate
\[
\int_0^T\int_{\bbT}\CC(\iota_NJ^N)\d t \leq \frac 1 N \sum_{k=1}^N \int_0^T\CC(J^N) \d t <\infty.
\]
Hence, by the criterion of de la Vallée Poussin, we conclude that there exists $j\in\L^1(0,T;\L^1(\bbT)$ with $\iota_N J^N\rightharpoonup j$ in $\L^1([0,T]\times\bbT)$ for a subsequence. By Lemma \ref{lem:FluxEmbedding}, we also conclude that there is a subsubsequence such that  the induced measures $\mathcal{I}_NJ^N=j^N$  converge in $\mathcal{M}([0,T],\bbT)$ to the same limit $j\in\L^1([0,T],\bbT)$ finishing the proof. 
\end{proof}
\begin{lemma}[Time regularity]\label{lem:TimeRegularity}
Let $(c^{N},J^{N})$ be admissible curves. Then, we have that $\rho^{N}\in \BV([0,T],\bigl(\W^{2,\infty}(\bbT)\bigr)^{*})$
uniformly in $N\in\N$, i.e. we have a uniform bound on 
\[
\|\rho^{N}\|_{\TV}=\sup\biggl\{ \sum_{\ell=1}^{L}\|\rho^{N}(t_{\ell})-\rho^{N}(t_{\ell-1})\|_{*}:\quad0=t_{0}\leq t_{1}\leq\dots\leq t_{\ell}\leq\dots\dots\leq t_{L}=T\biggr\} ,
\]
where the dual norm for functions in $f\in\L^{1}(\mathbb{T})$ with respect to $\W^{2,\infty}$ is
defined by 
\[
\|f\|_{*}:=\sup_{\phi\in\W^{2,\infty}}\Bigl\{ \int_{\mathbb{T}}f\,\phi\,\dx x:\|\phi\|_{\L^{\infty}(\mathbb{T})}+\|\phi'\|_{\L^{\infty}(\mathbb{T})}+\|\phi''\|_{\L^{\infty}(\mathbb{T})}\leq1\Bigr\} .
\]
\end{lemma}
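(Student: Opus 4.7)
The plan is to bound the total-variation norm directly using the discrete continuity equation $\dot c^N=\Delta^N J^N$ together with the flux integrability already established in Lemma~\ref{lem:UniIntFluxes}. Given an arbitrary partition $0=t_0\leq\cdots\leq t_L=T$ and a test function $\phi\in\W^{2,\infty}(\bbT)$ with $\|\phi\|_{\L^\infty}+\|\phi'\|_{\L^\infty}+\|\phi''\|_{\L^\infty}\leq 1$, I use the embedding duality and the self-adjointness of $\Delta^N$ with respect to $\langle\cdot,\cdot\rangle_N$ to write
\[
\langle \rho^N(t_\ell)-\rho^N(t_{\ell-1}),\phi\rangle = \langle c^N(t_\ell)-c^N(t_{\ell-1}),\iota_N^*\phi\rangle_N = \int_{t_{\ell-1}}^{t_\ell}\langle J^N(t),\Delta^N\iota_N^*\phi\rangle_N\,\dx t.
\]

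The key estimate in the next step is the commutator-type bound already isolated in the proof of Lemma~\ref{lem:FluxEmbedding}, namely $\|\Delta^N\iota_N^*\phi\|_{\L^\infty_N}\leq 2\|\phi''\|_{\L^\infty(\bbT)}\leq 2$. Combining this with the identity above and taking the supremum over admissible $\phi$ yields the pointwise-in-partition bound
\[
\|\rho^N(t_\ell)-\rho^N(t_{\ell-1})\|_* \leq 2\int_{t_{\ell-1}}^{t_\ell}\|J^N(t)\|_{\L^1_N}\,\dx t.
\]
Summing telescopes the right-hand side to a partition-independent bound
\[
\|\rho^N\|_{\TV}\leq 2\int_0^T\|J^N(t)\|_{\L^1_N}\,\dx t.
\]

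The final ingredient is the uniform bound on $\int_0^T\|J^N\|_{\L^1_N}\,\dx t$, which I would extract from Lemma~\ref{lem:UniIntFluxes} using the superlinearity of $\CC$ at infinity. Since $\CC$ is the Legendre dual of $\CC^*(r)=4(\cosh(r/2)-1)$, it grows like $|s|\log|s|$ at infinity, so there is a constant $\sfc>0$ with $|s|\leq \sfc(1+\CC(s))$ for all $s\in\R$. Integrating pointwise and invoking Lemma~\ref{lem:UniIntFluxes} gives
\[
\int_0^T\|J^N\|_{\L^1_N}\,\dx t\leq \sfc\Bigl(T+\tfrac1N\sum_{k=1}^N\int_0^T\CC(J^N_k)\,\dx t\Bigr)\leq \sfc(T+C_{\mathrm{flux}}),
\]
which closes the argument.

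No step here poses a conceptual obstacle: the proof is essentially a bookkeeping exercise combining (i) the $\iota_N/\iota_N^*$ duality with the discrete continuity equation, (ii) the second-order commutator estimate already extracted in Lemma~\ref{lem:FluxEmbedding}, and (iii) the $\CC$-moment bound on fluxes from Lemma~\ref{lem:UniIntFluxes}. The only mildly delicate point is verifying the superlinearity constant for $\CC$, which is immediate from the cosh structure of $\CC^*$ and standard Young--Fenchel manipulations.
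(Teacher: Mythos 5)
Your proof is correct and follows essentially the same route as the paper: duality of the embeddings plus the discrete continuity equation, the second-order difference bound $\|\Delta^N\iota_N^*\phi\|_{\L^\infty_N}\leq 2\|\phi''\|_{\L^\infty}$ extracted from Lemma~\ref{lem:FluxEmbedding} (the paper routes the same estimate through $\|\mathcal{I}_NJ^N\|_{\L^1}\leq 2\|J^N\|_{\L^1_N}$), and the flux integrability of Lemma~\ref{lem:UniIntFluxes}. Your only genuine addition is making explicit, via the superlinearity bound $|s|\leq \sfc(1+\CC(s))$, why the $\CC$-moment bound yields $J^N\in\L^1([0,T],\L^1_N)$ uniformly -- a step the paper leaves implicit.
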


\begin{proof}

Fixing two time values, say $t_{1},t_{2}\in[0,T]$, and a test function
$\phi\in\W^{2,\infty}(\bbT)$, we observe by the use of $({c}^{N},J^{N})\in\CE_{N}$ and Lemma \ref{lem:FluxEmbedding}
that
\begin{align*}
\MoveEqLeft\bigl\langle\phi,\rho^{N}(t_{2})-\rho^{N}(t_{1})\bigr\rangle  =\bigl\langle\phi,\iota_{N}(c^{N}(t_{2})-c^{N}(t_{1}))\bigr\rangle=\bigl\langle\iota_{N}^{*}\phi,c^{N}(t_{2})-c^{N}(t_{1})\bigr\rangle=\int_{t_{1}}^{t_{2}}\bigl\langle\iota_{N}^{*}\phi,\dot{c}^{N}(t)\bigr\rangle\d t.\\
 & \stackrel{\mathclap{\CE_{N}}}{=}\int_{t_{1}}^{t_{2}}\bigl\langle\iota_{N}^{*}\phi,\Delta^{N}J^{N}(t)\bigr\rangle\d t=\int_{t_{1}}^{t_{2}}\bigl\langle\phi,\partial_{xx}{\cal I}_{N}J^{N}(t)\bigr\rangle\dx t\\
 & \leq\int_{t_{1}}^{t_{2}}\|\partial_{xx}\phi\|_{\L^{\infty}}\times\|{\cal I}_{N}J^{N}(t)\|_{1}\d t\leq2\int_{t_{1}}^{t_{2}}\|{\iota}_{N}J^{N}(t)\|_{1}\d t =
 2 |\iota_NJ^N|([t_1,t_2]\times\mathbb{T}).
\end{align*}
In particular, this implies that
\begin{align}\label{eq:Equicontinuity}
    \forall\, t_1,t_2\in[0,T]:\quad \|\rho^{N}(t_{2})-\rho^{N}(t_{1})\|_*\leq
    2 |\iota_NJ^N|([t_1,t_2]\times\mathbb{T}).
\end{align}
Summing up, using the bound on the embedded fluxes from Lemma \ref{lem:FluxEmbedding} and the boundedness of $J^N\in\L^1([0,T],\L^1_N)$ from Lemma \ref{lem:UniIntFluxes}, we arrive at the uniform TV-bound 
\[
\sum_{\ell=1}^{L}\|\rho^{N}(t_{\ell})-\rho^{N}(t_{\ell-1})\|_{*}\leq2 \sum_{\ell}\int_{t_{\ell-1}}^{t_{\ell}}\|J^{N}(t)\|_{1}\d t = 2 |J^{N}|([0,T]\times\bbT_N)<\infty \,. \qedhere
\]
\end{proof}
\begin{proposition}[Strong compactness]\label{p:StrongCompactness}
There exists $\rho\in\L^{\alpha+1}([0,T],\L^{1}(\mathbb{T}))$ such that the strong convergence $\rho^{N}\to\rho$ in $\L^1([0,T],\L^{1}(\mathbb{T}))$ holds up to a subsequence.
\end{proposition}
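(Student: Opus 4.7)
The plan is to prove strong compactness of $\rho^N$ in $\rmL^1([0,T],\rmL^1(\bbT))$ by
an Aubin--Lions--Simon type argument, combining pointwise-in-time spatial
compactness with uniform time equicontinuity in a weaker norm. The spatial
regularity is naturally available for the nonlinear transform $u^N := (\rho^N)^{\alpha/4}$:
combining Lemma~\ref{lem:SpatReg} with the dissipation bound \eqref{eq:AprioriBounds} yields
\[
\int_0^T \bigl\|\partial_\pm^N (c^N)^{\alpha/4}(t)\bigr\|_{\rmL^4_N}^4 \dx{t}
 \leq 24\alpha^2\, C_\mathrm{diss},
\]
and Corollary~\ref{cor:StaticBounds}(C) promotes this to a uniform bound on
$u^N$ in $\rmL^4(0,T;\rmW^{1,4}(\bbT))$. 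By the Rellich embedding
$\rmW^{1,4}(\bbT) \hookrightarrow\hookrightarrow \rmC^0(\bbT)$, for a.e.~$t$
the sequence $\{u^N(t)\}_N$ is relatively compact in $\rmC^0(\bbT)$;
composition with the continuous map $s\mapsto s^{4/\alpha}$ on a bounded interval
(the uniform $\rmL^\infty$ bound being provided by Corollary~\ref{cor:StaticBounds}(A))
then makes $\{\rho^N(t)\}_N$ relatively compact in $\rmC^0(\bbT)$, and in
particular in $\rmL^1(\bbT)$.

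Next I would derive uniform time equicontinuity of $\rho^N$ in the dual
space $\bigl(\rmW^{2,\infty}(\bbT)\bigr)^*$. The bound \eqref{eq:Equicontinuity}
of Lemma~\ref{lem:TimeRegularity}, together with Lemma~\ref{lem:FluxEmbedding}
and the control on $\int_0^T \frac1N\sum_k \mathsf{C}(J^N_k)\dx{t}$
from Lemma~\ref{lem:UniIntFluxes}, implies via the de la Vall\'ee--Poussin
criterion (exploiting the superlinearity of $\mathsf{C}$) that
$t \mapsto \|\iota_N J^N(t)\|_{\rmL^1(\bbT)}$ is equi-integrable, and hence
\[
\lim_{h \searrow 0} \sup_{N} \int_0^{T-h} \bigl\|\rho^N(t{+}h) - \rho^N(t)\bigr\|_{(\rmW^{2,\infty})^*} \dx{t} = 0.
\]
Combining the two ingredients through Simon's compactness theorem applied to
the chain $\rmC^0(\bbT) \hookrightarrow\hookrightarrow \rmL^1(\bbT) \hookrightarrow
\bigl(\rmW^{2,\infty}(\bbT)\bigr)^*$ extracts a subsequence along which
$\rho^N \to \rho$ strongly in $\rmL^1(0,T;\rmL^1(\bbT))$ and pointwise
a.e.\ on $[0,T]\times \bbT$. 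The integrability $\rho \in \rmL^{\alpha+1}$
then follows from Lemma~\ref{lem:ImprovedRegularity} by Fatou's lemma.

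The hard part will be the transfer of compactness from the nonlinearly
transformed quantity $u^N = (\rho^N)^{\alpha/4}$, which is the one controlled by
the slope, to $\rho^N$ itself, since only $\rho^N$ inherits time regularity
from the continuity equation. This transfer relies on the uniform
pointwise-in-space bound in Corollary~\ref{cor:StaticBounds}(A), which holds at
a.e.~$t$ where the slope is finite, so that the continuous map $s\mapsto s^{4/\alpha}$
acts on a compact interval and preserves compactness. An alternative route that
avoids this transfer entirely is to use Vitali's theorem: pointwise a.e.\
convergence of $u^N$ on $[0,T]\times\bbT$ yields a.e.\ convergence of
$\rho^N = (u^N)^{4/\alpha}$, and the uniform $\rmL^{\alpha+1}$ bound from
Lemma~\ref{lem:ImprovedRegularity} supplies the equi-integrability needed to
upgrade to strong $\rmL^1$ convergence.
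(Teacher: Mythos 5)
Your overall strategy is the same as the paper's: spatial regularity for $(\rho^N)^{\alpha/4}$ extracted from the slope via Lemma~\ref{lem:SpatReg}, time equicontinuity in $(\W^{2,\infty}(\bbT))^*$ from the flux bounds (this part of your argument is correct and matches Lemma~\ref{lem:TimeRegularity} and Step~3 of the paper's proof), and an Aubin--Lions type theorem to combine them. However, the spatial-compactness half of your argument has genuine gaps. First, Corollary~\ref{cor:StaticBounds}(C) does \emph{not} give a uniform bound on $u^N$ in $\L^4(0,T;\W^{1,4}(\bbT))$: the embedding $\iota_N$ is piecewise constant, so $\iota_N u^N$ has jumps and is never in $\W^{1,4}(\bbT)$; the corollary only controls the $\BV$ norm. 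This is precisely why the paper works with $\BV$ and Helly's selection theorem rather than a Rellich embedding of $\W^{1,4}$ into $\rmC^0$. Second, even granting some Sobolev-type control, you infer that ``for a.e.\ $t$ the sequence $\{u^N(t)\}_N$ is relatively compact'' from a bound that is only integrated in time, $\int_0^T\|\partial^N_\pm u^N(t)\|^4_{\L^4_N}\dx t\le C$ uniformly in $N$. For a fixed $t$ the quantity $\|\partial^N_\pm u^N(t)\|_{\L^4_N}$ may blow up as $N\to\infty$ (on $t$-sets of shrinking measure), so pointwise-in-time relative compactness, and likewise the pointwise-in-time $\L^\infty$ bound you invoke from Corollary~\ref{cor:StaticBounds}(A), do not follow. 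The same objection applies to your Vitali alternative: the a.e.\ convergence of $u^N$ on $[0,T]\times\bbT$ that it presupposes is exactly what the compactness argument must produce, so it is circular as stated.

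Third, even if the first two points were repaired, the classical Simon theorem you invoke requires a uniform bound of $\rho^N$ itself in $\L^p(0,T;X)$ for a \emph{fixed} Banach space $X$ compactly embedded in $\L^1(\bbT)$. Your transfer of compactness through the nonlinear map $s\mapsto s^{4/\alpha}$ produces, at best, relative compactness of sets, not a norm bound in such an $X$ (and for $\alpha>4$ the map is merely H\"older near zero, so it does not preserve Sobolev regularity anyway). This is exactly the obstruction the paper circumvents by using the Rossi--Savar\'e generalization \cite{RoSa03}, where the role of $X$ is played by the sublevel sets of the normal coercive integrand $\mathcal{F}(\rho)=\cE(\rho)+\|\rho^{\alpha/4}\|_{\L^1}+\|\rho^{\alpha/4}\|^4_{\BV}$; compactness of these sublevels is proved via Helly for $(\rho^N)^{\alpha/4}$, convergence in measure under the continuous power map, and de la Vall\'ee--Poussin equi-integrability from the entropy. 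To make your proof work you would need to replace the Rellich/Simon machinery by this (or an equivalent) integrand-based compactness framework; the final step, $\rho\in\L^{\alpha+1}$ via Lemma~\ref{lem:ImprovedRegularity} and Fatou, is then fine.
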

\begin{proof}
The proof relies on the Aubin-Lions type result from \cite{RoSa03} (more precisely, Thm.~2 in addition with Prop.~1.10). As usual it combines spatial regularity, which we will deduce from Lemma \ref{lem:SpatReg} together with temporal regularity deduced from Lemma \ref{lem:TimeRegularity}. By Lemma \ref{lem:ImprovedRegularity}, we already now that $\rho^N$ is uniformly bounded in $\L^{\alpha+1}([0,T]\times\bbT)$. Hence, it suffices to show that $\rho^N\to\rho$ in $\L^1$. We perform the proof in three steps.

\smallskip
\noindent
\underline{\emph{Step 1.}}
On $\L^1(\mathbb{T})$, we define the functional
\begin{align*}%
    \mathcal{F}(\rho):=\cE(\rho) + \|\rho^{\alpha/4}\|_{\L^1(\bbT)} + \|\rho^{\alpha/4}\|^4_{\BV(\mathbb{T})}.
\end{align*}
From the bound on the energy, the bound on the dissipation functional with Lemma \ref{lem:SpatReg} and the improved integrability from Lemma \ref{lem:ImprovedRegularity}, we conclude that the sequence $\rho^N$ satisfies 
\[
\int_0^T\cE(\rho^N(t))\dx t < \infty , \quad \int_0^T \|(\rho^N)^{\alpha/4}\|_{\L^1(\bbT)}\dx t <\infty, \quad
\text{and}\quad  \int_0^T \|\partial^N_\pm(c^N)^{\alpha/4}\|_{\L^4_N(\bbT)}^4\dx t <\infty.
\]
By  {Corollary}~\ref{cor:StaticBounds}, the last term provides a bound on $\int_0^T\|(\rho^N)^{\alpha/4}\|^4_{\BV(\mathbb{T})}\dx t$, which shows that the sequence $\rho^N$ is tight w.r.t. $\mathcal{F}$, i.e. $\sup_{N\in\N}\int_0^T\mathcal{F}(\rho^N(t))\dx t<\infty$. 

\smallskip
\noindent
\underline{\emph{Step 2.}}
We show that $\mathcal{F}:\L^1(\bbT)\to[0,\infty]$ is a normal coercive integrand 
(in the sense of~\cite{RoSa03}). In particular, we have to show 
\[
\forall\, c>0: \{\rho\in\L^1(\bbT):\mathcal{F}(\rho)\leq c\} \quad \text{is compact in }\L^1(\bbT).
\]
To see this we recall the classical Helly's selection criteria, which is stated as 
following. For a given sequence $f_n:\bbT\to\R$ with $\sup_{n\in\N}\bigl(\|f_n 
\|_{\L^1(\bbT)} + \|f_n\|_{\BV(\bbT)}\bigr)<\infty$ there exists a subsequence 
$f_{n_k}$ and a function $f\in\BV(\bbT)$, such that $f_{n_k}\to f$ pointwise 
almost everywhere, $\|f_{n_k}-f\|_{\L^1(\bbT)}\to 0$, and $\|f\|_{\BV(\bbT)} 
\leq \liminf_{k\to \infty} \|f_{n_k}\|_{\BV(\bbT)}$. 

Now, to prove that $\mathcal{F}$ has compact sublevels, we take $c>0$ and any 
sequence $\rho^N$ in the sublevelset  of $\mathcal{F}$. Defining pointwise 
$\upsilon^N:=(\rho^N)^{\alpha/4}$ (possible because $\rho^N\geq 0$), we conclude 
that $\sup_{N\in\N}\bigl(\|\upsilon^N\|_{\L^1(\bbT)} + \| \upsilon^N 
\|_{\BV(\bbT)}\bigr)<\infty$, where we used that $x\mapsto x^4$ is monotone. 
Hence, by Helly's selection criteria we conclude that it exists $ \upsilon \in
\L^1(\bbT)$ such that (up to subsequence) we have the convergence $\upsilon_N 
\to \upsilon$ in $\L^1$. In particular, we have that $\upsilon^N\to \upsilon$ 
in measure. The power is continuous and hence, $\rho^N=(\upsilon^N)^{4/\alpha} 
\to \upsilon^{4/\alpha}$ in measure\footnote{We recall that on a set with finite 
measure we have that a sequence $f_n$ converges to $f$ in measure if and only 
if every subsequence has in turn a subsequence that converges to $f$ almost 
everywhere. In particular, applying a continuous function does not change 
convergence in measure.}. Because the energy $\mathcal E$ is superlinear, we 
know that the sequence $\rho^N$ is uniformly integrable by the de la 
Vallée-Poussin theorem. Convergence in measure together with uniform 
integrability implies the strong convergence $\rho^N\to\rho$ in $\L^1(\bbT)$. 
In particular, we also observe that $\mathcal{F}$ is lower semicontinuous.

\smallskip\noindent
\underline{\emph{Step 3.}}
Finally we show the weak integral equicontinuity. We have to show that 
\[
\lim_{h\to 0} \sup_{N\in\N}\int_0^{T-h} \|\rho^N(t+h)-\rho^N(t)\|_*\d t=0. 
\]
By the bound~\eqref{eq:Equicontinuity}, it is sufficient to show
\[
\forall\, \eps>0 \exists\, h>0:\quad \sup_{N\in\N}\int_0^{T-h} |\iota_NJ^N|([t,t+h]\times \bbT)\d t<T \eps \,.
\]
Indeed, because $\iota_NJ^N$ is uniformly integrable, we find for all $\eps>0$ there is a $\delta>0$ such that for all $h\in]0,\delta[$ (note: $|\bbT|=1$) we have $\sup_{N}|\iota_NJ^N|([t,t+h]\times \bbT)< \eps$. Hence, $ \sup_{N\in\N}\int_0^{T-h} |\iota_NJ^N|([t,t+h]\times \bbT)\d t<T \eps$. 

Applying \cite[Thm.~2]{RoSa03}, we conclude that the sequence $\rho^N$ is relatively compact in $\mathcal{M}(0,T;\L^1(\bbT))$. Since $\rho^N\in\mathcal{M}(0,T;\L^1(\bbT))$ is uniformly integrable in $\L^1([0,T]\times\bbT)$ by the bound on the energy, we conclude by \cite[Prop.\,1.10]{RoSa03} that $\rho^N$ is relatively compact in $\L^1(0,T;\L^1(\bbT))$, which finishes the proof.
\end{proof}

\subsubsection*{Proof of EDP convergence: liminf estimates}%

In this part of the section, we make use of the just derived compactness to obtain $\liminf$ estimates of the entropy and dissipation potentials.
For doing so, we consier a sequence $(c^N,J^N)\in \CE_N$ such that 
\begin{equation}\label{eq:curves:finiteLE}
	\sup_{N} L_{\alpha,N}(c^N,J^n)<\infty \qquad\text{ and } \qquad 
	\sup_{N} E_N(c^N(0)) <\infty \,.
\end{equation}
In particular, the sequence $(c^N,J^N)$ satisfies the a priori bounds~\eqref{eq:AprioriBounds}.

We start with the proof of the $\liminf$-estimate for the energy in Result~\ref{result:EDP} and Theorem~\ref{thm:ConvergenceResult}.
\begin{lemma}[Liminf of energy]
Let $(c^N,J^N)\in \CE_N$ satisfy~\eqref{eq:curves:finiteLE}. 
Let $\rho$ be the limit from Proposition \ref{p:StrongCompactness}. Then, we have for all $t\geq 0$ the following liminf estimate 
\[
\liminf_{N\to\infty}E_{N}(c^N(t))\geq{\cal E}(\rho(t)).
\]
\end{lemma}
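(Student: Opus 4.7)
The plan exploits the exact identity $E_N(c^N(t)) = \cE(\iota_N c^N(t)) = \cE(\rho^N(t))$, which follows from $\rho^N(t)$ being piecewise constant on intervals of length $1/N$, so that the Riemann sum defining $E_N$ coincides with the integral defining $\cE$. The task thus reduces to showing $\rho^N(t) \rightharpoonup \rho(t)$ weakly in $\L^1(\bbT)$ for each fixed $t$, after which the convexity of $\cE$ and its lower semicontinuity with respect to weak $\L^1$-convergence deliver the bound $\liminf_N \cE(\rho^N(t)) \geq \cE(\rho(t))$.

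As a first step I would upgrade the integrated dissipation bound into a uniform pointwise-in-time energy bound. For any $(c^N, J^N) \in \CE_N$ the discrete chain rule together with the Young--Fenchel inequality applied on every subinterval $[s,t] \subset [0,T]$ gives
\[
E_N(c^N(t)) - E_N(c^N(s)) \leq \int_s^t \bigl( R_{\alpha,N}(c^N, J^N) + S_{\alpha,N}(c^N)\bigr) \dx r.
\]
Combining with $E_N(c^N(T)) \leq E_N(c^N(0)) + L_{\alpha,N}(c^N,J^N)$ and the elementary estimate $D_{\alpha,N}(c^N,J^N) \leq L_{\alpha,N}(c^N,J^N) + E_N(c^N(0))$, this yields $\sup_N \sup_{t \in [0,T]} E_N(c^N(t)) < \infty$ under the standing hypotheses \eqref{eq:curves:finiteLE}. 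Since $x \log x$ is superlinear, de la Vallée--Poussin then provides uniform integrability of $\{\rho^N(t)\}_N$ in $\L^1(\bbT)$ for every fixed $t$.

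Next, I combine the time-BV regularity from Lemma~\ref{lem:TimeRegularity} with a vector-valued Helly selection principle in $(\W^{2,\infty}(\bbT))^*$: along a subsequence there exists a representative $\rho \in \BV([0,T]; (\W^{2,\infty})^*)$ such that $\rho^N(t) \to \rho(t)$ in $(\W^{2,\infty})^*$ for every $t \in [0,T]$. This pointwise-in-$t$ limit agrees almost everywhere with the $\L^1$-limit produced by Proposition~\ref{p:StrongCompactness}, so it amounts to passing to a BV-in-time representative. Finally, I upgrade to weak $\L^1$-convergence pointwise in time: by Dunford--Pettis, every subsequence of $\{\rho^N(t)\}_N$ has a further sub-subsequence converging weakly in $\L^1(\bbT)$ to some $\tilde\rho$; testing against smooth functions (which lie in $\W^{2,\infty} \subset \L^\infty$) identifies $\tilde\rho = \rho(t)$, so the full sequence converges weakly in $\L^1(\bbT)$ to $\rho(t)$.

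The main obstacle is the discrete chain rule in the first step when the curve $c^N$ visits the boundary $\partial\Prob^N$, where $E_N'(c^N) = \log c^N$ becomes singular. For EDB solutions this issue does not arise because Proposition~\ref{prop:WellposednessODE} guarantees strict positivity for $t > 0$; for general admissible curves with finite $L_{\alpha,N}$ one regularizes by replacing $c^N$ with $(c^N + \varepsilon\eins)/(1+\varepsilon)$, applies the chain rule on $\Prob^N_{>0}$, and passes to $\varepsilon \to 0$ using continuity of $E_N$ on $\Prob^N$ together with the convexity and continuity properties of $R_{\alpha,N}$ and $S_{\alpha,N}$ built into the formulation.
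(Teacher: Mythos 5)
Your proof is correct and rests on the same two pillars as the paper's argument: the exact identity $E_N(c^N(t))=\cE(\iota_N c^N(t))$ for the piecewise-constant embedding, and lower semicontinuity of the convex, superlinear entropy $\cE$. Where you differ is in how convergence of $\rho^N(t)$ at a \emph{fixed} time $t$ is secured. The paper's proof only invokes the strong convergence $\rho^N\to\rho$ in $\L^1([0,T],\L^1(\bbT))$ from Proposition~\ref{p:StrongCompactness} before appealing to lower semicontinuity, which strictly speaking yields the estimate only for a.e.\ $t$ along a subsequence; the ``for all $t$'' claim is left implicit. You instead rederive the uniform-in-time energy bound from \eqref{eq:curves:finiteLE} via the discrete Young--Fenchel inequality (recovering \eqref{eq:AprioriBounds}), combine the $\BV$-in-time control of Lemma~\ref{lem:TimeRegularity} with a vector-valued Helly selection in $(\W^{2,\infty}(\bbT))^*$, and then upgrade via Dunford--Pettis to weak $\L^1(\bbT)$ convergence of $\rho^N(t)$ to a good representative of $\rho(t)$ for \emph{every} $t$. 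This is more machinery, but it genuinely justifies the pointwise-in-time statement and identifies the correct representative of the limit curve; your remark about regularizing the discrete chain rule near $\partial\Prob^N$ is likewise a legitimate point the paper does not spell out. Both routes conclude with the same weak-$\L^1$ lower-semicontinuity step, so the proofs agree in substance, with yours being the more complete version of the paper's sketch.
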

\begin{proof}
Since $(c^N,J^N)$ satisfy the a priori estimates \eqref{eq:AprioriBounds}, we conclude from Proposition \ref{p:StrongCompactness}, that $\iota_{N}c^{N}=\rho^{N}\to\rho$ in $\L^{1}([0,T],\L^{1}(\mathbb{T}))$.  Moreover, we observe that 
\begin{align*}
{\cal E}(\iota_{N}c^{N}(t)) & =\int_{\mathbb{T}}\bigl(\rho^{N}(t)\log\rho^{N}(t)-\rho^{N}(t)+1\bigr)\dx x\\
&=\sum_{k=1}^N\int_{k/N}^{(k+1)/N}\bigl(c_{k}^{N}(t)\log(c_{k}^{N}(t))-c_{k}(t)^{N}+1\bigr)\dx{x} \\
 &=\frac{1}{N}\sum_{k=1}^N \bigl( c_{k}^{N}(t)\log(c_{k}^{N}(t))-c_{k}^{N}(t)+1 \bigr) = E_{N}(c^{N}(t)).
\end{align*}
Hence, the liminf estimate follows by the lower semicontinuity of the entropy ~${\cal E}$. 
\end{proof}
\begin{proposition}[Liminf for primal dissipation potential]%
Let $(c^N,J^N)\in \CE_N$ satisfy~\eqref{eq:curves:finiteLE}. 
Let $\rho$ be the limit from Proposition \ref{p:StrongCompactness} and $j\in\L^1([0,T],\bbT)$ be the weak-* limit from Lemma~\ref{lem:UniIntFluxes}. Then, the primal dissipation potentials satisfy the $\liminf$ estimate
\[
\liminf_{N\to\infty}\int_0^TR_{\alpha,N}(c^N,J^N)\d t \geq\int_0^T{\cal R}_\alpha (\rho,j)\d t \,.
\]
\end{proposition}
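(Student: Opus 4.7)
The natural strategy is to use Legendre--Fenchel duality to reduce the nonlinear primal dissipation to a \emph{linear-in-flux} pairing against a smooth test function minus a dual term, then pass to the limit, and finally recover $\cR_\alpha(\rho,j)$ by taking the supremum over test functions. Fix $\xi\in\rmC^\infty([0,T]\times\bbT)$ and set the scaled potential $\eta_k^N(t):=\xi(t,k/N)/N^{2}$. Applying the Young--Fenchel inequality $\CC(s)\geq s\eta-\CC^*(\eta)$ inside the perspective function gives, pointwise in $(t,k)$,
\[
N^{2}\CCC\bigl(J_{k}^N\,\big|\,N^{2}m_{\alpha}^N\bigr)\;\geq\;J_{k}^N\,\xi(t,k/N)\;-\;N^{4}m_{\alpha}^N\,\CC^*\!\bigl(\xi(t,k/N)/N^{2}\bigr).
\]
Using $\CC^*(r)=r^{2}/2+O(r^{4})$ for small $r$, the last term equals $m_\alpha^N\,\xi(t,k/N)^{2}/2$ plus an error $O(\|\xi\|_\infty^{4}/N^{4})\cdot m_\alpha^N$ which is harmless once we sum and use the uniform $\rmL^1$-bound on $\iota_N m_\alpha^N$ (from the upper bound in \eqref{eq:def:sigma-Aver} and Lemma~\ref{lem:ImprovedRegularity}).

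Summing $\frac1N\sum_k$ and integrating in time yields
\[
\int_0^T \!\!R_{\alpha,N}(c^N,J^N)\dx t\;\geq\; \int_0^T\!\!\int_\bbT \iota_N J^N\,\xi_N\dx x\dx t\;-\;\tfrac12\int_0^T\!\!\int_\bbT \iota_N[m_\alpha^N]\,\xi_N^{2}\dx x\dx t\;+\;o(1),
\]
where $\xi_N(t,x):=\xi(t,\lfloor Nx\rfloor/N)$ converges uniformly to $\xi$. The first integral converges to $\int_0^T\!\int_\bbT j\,\xi\dx x\dx t$ by the weak $\rmL^1$-convergence $\iota_N J^N\rightharpoonup j$ from Lemma~\ref{lem:UniIntFluxes}. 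For the second integral one must establish the strong convergence $\iota_N[m_\alpha^N]\to \rho^\alpha$ in $\rmL^1([0,T]\times\bbT)$. The upper bound in \eqref{eq:def:sigma-Aver}, together with Lemma~\ref{lem:StolarskyInequality}, dominates $m_\alpha^N$ by shifts of $(c^N)^\alpha$; the discrete Sobolev control of $(c^N)^{\alpha/2}$ from Lemma~\ref{lem:SpatReg} and Corollary~\ref{cor:StaticBounds}(B) shows that the shifted quantities share the same limit in $\rmL^2$; and the improved integrability of Lemma~\ref{lem:ImprovedRegularity} provides the equi-integrability needed to close the argument via Vitali's theorem.

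Combining these two limit passages, for every smooth $\xi$ we obtain
\[
\liminf_{N\to\infty}\int_0^T\!\! R_{\alpha,N}(c^N,J^N)\dx t\;\geq\;\int_0^T\!\!\int_\bbT\bigl(j\,\xi-\tfrac12\rho^\alpha\xi^{2}\bigr)\dx x\dx t.
\]
Taking the supremum over a dense countable family of smooth test functions and exchanging supremum and integral via the theory of normal convex integrands (Rockafellar), one identifies the pointwise supremum of $\xi\mapsto j\xi-\rho^\alpha\xi^{2}/2$ as $j^{2}/(2\rho^\alpha)$ on $\{\rho>0\}$ and as $+\infty$ on $\{\rho=0,\,j\neq0\}$. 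This is exactly the integrand of $\cR_\alpha(\rho,j)$ as defined in~\eqref{eq:def:primal:dissipation}, concluding the proof.

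The main obstacle is the strong convergence $\iota_N[m_\alpha^N]\to\rho^\alpha$: the mobility is a nonlinear function of three neighbouring concentrations, only controlled by the two-sided bounds of Assumption~\ref{ass:Activity}. The combination of $\rmL^{\alpha+1}$-integrability from Lemma~\ref{lem:ImprovedRegularity} (which yields equi-integrability of the dominating quantity) and the control of discrete gradients of $(c^N)^{\alpha/2}$ from Lemma~\ref{lem:SpatReg} (which turns discrete shifts into vanishing $\rmL^2$-errors) is precisely what is needed to make this step rigorous.
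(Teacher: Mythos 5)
Your proposal is correct and follows essentially the same route as the paper: a Fenchel--Young duality argument that bounds the primal dissipation from below by a linear pairing of the flux with a smooth test function minus a quadratic dual term, passes to the limit using the weak $\rmL^1$ convergence of $\iota_N J^N$ and an upper bound on the mobility, and recovers $\cR_\alpha(\rho,j)$ by a supremum over test functions (the paper tests with $\Delta^N\iota_N^*\xi$ and controls $\CC^*$ via a $\cosh$ bound rather than a Taylor expansion, but these are cosmetic differences). The one place you demand more than is needed is the mobility term: since it enters with a negative sign, only $\limsup_N \int \iota_N[m_\alpha^N]\,\xi_N^2 \leq \int \rho^\alpha \xi^2$ is required, which follows from the upper bound $m_\alpha \leq \max\{c_k^\alpha,(c_{k-1}c_{k+1})^{\alpha/2}\}$ together with the slope-induced $\rmL^2$-closeness of $c_k^{\alpha/2}$ and $(c_{k-1}c_{k+1})^{\alpha/4}$; your stronger claim of strong $\rmL^1$ convergence $\iota_N[m_\alpha^N]\to\rho^\alpha$ is provable by squeezing between the two bounds of Assumption~\ref{ass:Activity}, but it is unnecessary work.
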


\begin{proof}
For all $N\in\N$, we have $(c^N,J^N)\in\CE_N$, and hence thanks to the embeddings~\eqref{eq:ContinuityEquations} also $(\rho^N,j^N)\in\CE$, which holds in the sense of distributions. Since $\rho^N\to\rho$ in $\L^1\L^1$ and $j^N\wstarlim j$ in $\mathcal{M}([0,T]\times\bbT)$, we conclude that $(\rho,j)\in\CE$. Hence, we only need to show
\begin{align}\label{eq:liminfestimate}
\liminf_{N\to\infty}\int_{0}^{T}\frac{1}{N}\sum_{k=1}^N\CCC\bigl(N^2J_{k}^{N}|N^{4}m_\alpha(c_{k-1},c_{k},c_{k+1})\bigr)\d t & \geq \frac{1}{2} \int_{0}^{T}\int_{\mathbb{T}}\frac{j^{2}}{\rho^\alpha}\d x\d t.
\end{align}
We are going to show  {the liminf estimate for the slope} by duality. To do so, we recall from the a priori estimates \eqref{eq:AprioriBounds} and the improved integrability from Lemma \ref{lem:ImprovedRegularity}, that $\rho^N\to\rho\in\L^1([0,T]\times\bbT)$ and that $\rho^N$ is uniformly bounded in $\L^{\alpha+1}([0,T]\times\bbT)$. In the first step, we show that for all
$\xi\in C^{\infty}([0,T]\times\mathbb{T})$ we have that
\begin{align*}%
\limsup_{N\to\infty}\int_{0}^{T}R_{\alpha,N}^{*}(c^{N},\Delta^N\iota_{N}^{*}\xi)\d t\leq\int_{0}^{T}{\cal R}_\alpha^{*}(\rho,\partial_{xx}\xi)\d t,
\end{align*}
where we recall
\[
R_{\alpha, N}^{*}(c,\Delta^N\iota_{N}^{*}\xi)=\!\frac{1}{N}\sum_{k=1}^N N^{4}m_\alpha(c_{k-1},c_{k},c_{k+1})\CC^{*}\!\!\Bigl(\!\tfrac{1}{N^2}\!\bigl(\Delta^{N}\iota_N^*\xi\bigr)_{k}\Bigr), \, \cR_\alpha^*(\rho,\partial_{xx} \xi) =\! \frac 1 2\int_\bbT\!\rho^\alpha |\partial_{xx} \xi|^2  \d x.
\]
Using the symmetry of $\CC^{*}$, the monotonicity of $]0,\infty]\ni r\mapsto\CC^{*}(r)$ and the commutator estimate~\eqref{eq:CommutatorEstimate},
we compute 
\begin{align*}
R_{\alpha,N}^{*}(c^{N},\partial_{xx} ^N \iota_{N}^{*}\xi) & \leq\frac{1}{N}\sum_{k=1}^N N^{4}m_\alpha(c^N_{k-1},c^N_{k},c^N_{k+1})\CC^{*}\!\!\Bigl(\tfrac{1}{N^2}\bigl(\Delta^{N}\iota_{N}^{*}\xi\bigr)_{k}\Bigr)\\
 & \leq\frac{1}{N}\sum_{k=1}^NN^{4}m_\alpha(c_{k-1}^N,c^N_{k},c^N_{k+1})\CC^{*}\!\!\Bigl(\!\tfrac{1}{N^2}\iota_{N}^{*}(|\partial_{xx}\xi|)_{k}+\frac{1}{3N^{3}}\|\xi'''\|_{\L^\infty}\!\!\Bigr).
\end{align*}
Using the elementary inequality $N^{4}\CC^{*}(r/N^{2})\leq\frac{r^{2}}{2}\cosh(r/N^{2})$,
we get 
\[
R_{\alpha,N}^{*}(c^{N},\Delta^N\iota_{N}^{*}\xi)\leq\omega_{N}\frac{1}{N}\sum_{k=1}^Nm_{\alpha}(c^N_{k-1},c^N_{k},c^N_{k+1})\frac{\Bigl(\iota_{N}^{*}(|\partial_{xx}\xi|)_{k}+\frac{1}{3N}\|\xi'''\|_{\infty}\Bigr)^{2}}{2},
\]
where we have introduced the positive factor 
\[
\omega_{N}:=\max_{k}\cosh\Bigl(\frac{1}{N^{2}}\iota_{N}^{*}(|\partial_{xx}\xi|)_{k}+\frac{1}{3N^3}\|\xi'''\|_{\L^\infty}\Bigr),
\]
which converges to one as $N\to\infty$ since $\xi\in C^{\infty}(\mathbb{T})$.
Moreover, using \eqref{eq:MobilityInequality} we have
\[
m_\alpha(c_{k-1},c_{k},c_{k+1})=\sigma_{\alpha}(c_{k-1},c_{k},c_{k+1})\bigl(c_{k}\sqrt{c_{k-1}c_{k+1}}\bigr)\leq\max\{c_{k}^{\alpha},\sqrt{c_{k-1}c_{k+1}}^\alpha\}=:A_{\alpha,k}^N(c),
\]
which provides the estimate 
\begin{align*}
R_{\alpha,N}^{*}(c^{N},\Delta^N\iota_{N}^{*}\xi) & \leq\omega_{N}\frac 1 2\frac{1}{N}\sum_{k=1}^NA_{\alpha,k}(c)\Bigl\{ \iota_{N}^{*}(|\partial_{xx}\xi|)_{k}^{2}+\frac{1}{3N}\|\xi'''\|_{\L^\infty}\iota_{N}^{*}(|\partial_{xx}\xi|)_{k}+\frac{1}{9N^{2}}\|\xi'''\|_{\L^\infty}^{2}\Bigr\} \\
 & =I^N_{1}+I^N_{2}+I^N_{3},
\end{align*}
with
\begin{align*}
I_{1}^N & :=\omega_{N}\frac 1 2 \frac{1}{N}\sum_{k=1}^NA_{\alpha,k}(c)\iota_{N}^{*}(|\partial_{xx}\xi|)_{k}^{2},\\
I_{2}^N & :=\omega_{N}\frac 1 2 \frac{1}{N}\sum_{k=1}^NA_{\alpha,k}(c)\frac{1}{3N}\|\xi'''\|_{\L^\infty}\iota_{N}^{*}(|\partial_{xx}\xi|)_{k},\\
I_{3}^N & :=\omega_{N}\frac 1 2 \frac{1}{N}\sum_{k=1}^NA_{\alpha,k}(c)\frac{1}{9N^{2}}\|\xi'''\|_{\L^\infty}^{2}.
\end{align*}
Since, $\xi\in C^{\infty}([0,T]\times\mathbb{T})$ and $\iota_{N}c^{N}$ is uniformly bounded in  $\L^{\max\{\alpha,1\}}([0,T]\times\mathbb{T})$,
we easily see that $\int_0^TI_{2}^N\d t,\int_0^TI_{3}^N\d t\to0$ as $N\to\infty$ by dominated convergence

For $I_{1}^N$, we use Jensen's inequality and the strong convergence $(\iota_Nc^N)^{\alpha}\rightharpoonup\rho^\alpha$ from \ref{p:StrongCompactness}, to conclude
\[
\limsup_{N\to \infty} \int_0^T R^*_{\alpha,N}(c^N,\Delta^N\iota_N^*)\d t=\limsup_{N\to \infty} \int_0^TI_1^N\d t \leq \frac 1 2\int_0^T\int_\bbT \rho^\alpha |\partial_{xx}\xi|^{2}\d x\d t = \int_0^T\cR_\alpha^*(\rho,\partial_{xx}\xi).
\]

With that estimate, we can now show the desired liminf-estimate \eqref{eq:liminfestimate}. Exploiting the duality of $\CC-\CC^{*}$, we have $
\langle J^{N},\Delta^{N}\iota_{N}^{*}\xi\rangle_{N}\leq R_{N}^{*}(c^{N},\Delta^N\iota_{N}^{*}\xi)+R_{N}(c^{N},J^{N})$. 
Hence, we conclude for $\xi\in C^{2}([0,T]\times\mathbb{T}^{d})$
that 
\begin{align*}
\int_{0}^{T}\langle j,\partial_{xx}\xi\rangle-{\cal R}^{*}_\alpha(\rho,\partial_{xx}\xi)\d t & \leq\liminf_{N\to\infty}\int_{0}^{T}\langle j^{N},\partial_{xx}\xi\rangle\d t-\limsup_{N\to\infty}\int_{0}^{T}R_{\alpha, N}^{*}(c^{N},\iota_{N}^{*}\Delta^N \xi)\d t\\
 & \leq\liminf_{N\to\infty}\int_{0}^{T}\langle J^{N},\Delta^{N}\iota_{N}^{*}\xi\rangle-R_{\alpha, N}^{*}(c^{N},\Delta^N\iota_{N}^{*}\xi)\d t\\
 & \leq\liminf_{N\to\infty}\int_{0}^{T}R_{\alpha, N}(c^{N},J^{N})\d t.
\end{align*}
Now, the desired liminf-estimate follows by taking the supremum over test functions for the
quadratic functional $
\xi\mapsto\int_{0}^{T}
\bra[\big]{ \langle j,\partial_{xx}\xi\rangle-{\cal R}^{*}(\rho,\xi) } \dx t$
\end{proof}

\begin{proposition}[Liminf estimate for the slope] 
If $\iota_N c^N \to \rho$ in $\rmL^{\max\{\alpha,1\}}(\bbT)$, then
\begin{align}
    \label{eq:LiminfStatSlope}
    \liminf_{N\to \infty} S_{\alpha,N}(c^N) \ \geq \ \cS_\alpha(\rho).
\end{align}
Moreover, if $(c^N,J^N)\in \CE_N$ satisfies~\eqref{eq:curves:finiteLE}\texttt{}
and $\rho$ is the limit from 
Proposition \ref{p:StrongCompactness}, then, it holds
\begin{align}
    \label{eq:LiminfSlopeT}
\liminf_{N\to\infty} \int_0^T S_{\alpha,N}(c^N)\d t\geq \int_0^T 
\mathcal{S}_\alpha(\rho)\d t.
\end{align}
\end{proposition}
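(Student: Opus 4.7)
The plan is to bound $S_{\alpha,N}(c^N)$ from below by a discrete expression of the form $\int u^2/v\,\dx x$ and exploit the joint convexity and lower semicontinuity of the integrand $(u,v)\mapsto u^2/v$ on $\R\times[0,\infty[$ --- exactly the integrand appearing in the Benamou--Brenier form~\eqref{eq:RelaxedSlope.BenamouBrenier} of the relaxed slope. Setting $w_k^N:=(c_k^N)^{\alpha/2}$, Assumption~\ref{ass:Activity} together with the Stolarsky identity $\s_\alpha(x,y)^{\alpha-1}\,\alpha(x{-}y)=x^\alpha-y^\alpha$ directly yields
\begin{equation*}
S_{\alpha,N}(c^N)\ \geq\ \frac{2}{\alpha^2}\,\frac{1}{N}\sum_{k=1}^N\frac{(u_k^N)^2}{v_k^N}, \quad u_k^N:=N^2\bigl((w_k^N)^2 - w_{k-1}^N w_{k+1}^N\bigr),\quad v_k^N:=\max\{(w_k^N)^2,\,w_{k-1}^N w_{k+1}^N\}.
\end{equation*}

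The key algebraic cancellation, obtained from $w_{k\pm1}^N = w_k^N\pm\tfrac{1}{N}\partial_\pm^N w_k^N$ (with the paper's sign convention) and $(\partial_+^N-\partial_-^N) = \tfrac{1}{N}\Delta^N$, is
\begin{equation*}
u_k^N \ =\ -w_k^N(\Delta^N w^N)_k \,+\, (\partial_-^N w^N)_k\,(\partial_+^N w^N)_k,
\end{equation*}
so the whole expression is quadratic in $w^N$. Setting $w:=\rho^{\alpha/2}$, the four convergences (up to a subsequence) that drive the argument are: (i) $\iota_N w^N \to w$ strongly in every $\rmL^p(\bbT)$ with $p<\infty$, by interpolating the assumed $\rmL^{\max\{\alpha,1\}}$-strong convergence of $\iota_N c^N$ against the uniform $\rmL^\infty$-bound from Corollary~\ref{cor:StaticBounds}(A); (ii) $\iota_N\partial_\pm^N w^N \to \partial_x w$ strongly in $\rmL^2(\bbT)$ by Corollary~\ref{cor:StaticBounds}(B), so that $\iota_N\bigl((\partial_-^N w^N)(\partial_+^N w^N)\bigr)\to(\partial_x w)^2$ strongly in $\rmL^1(\bbT)$; (iii) $\iota_N\Delta^N w^N \rightharpoonup \Delta w$ weakly in $\rmL^2(\bbT)$ by the bound from Lemma~\ref{lem:SpatReg}, whence by strong-weak pairing $\iota_N(w^N\Delta^N w^N)\rightharpoonup w\,\Delta w$ weakly in $\rmL^2(\bbT)$; and (iv) $\iota_N v^N \to \rho^\alpha = w^2$ strongly in $\rmL^1(\bbT)$ since both arguments of the max converge strongly. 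Consequently $\iota_N u^N \rightharpoonup u_*:=-(w\Delta w - (\partial_x w)^2)$ weakly in $\rmL^1(\bbT)$ while $\iota_N v^N\to\rho^\alpha$ strongly.

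The liminf step then invokes the Legendre duality $u^2/v = \sup_{\phi\in\R}(2\phi u - \phi^2 v)$ valid for $v\geq 0$ with the standard conventions $0^2/0=0$, $a^2/0=+\infty$ for $a\neq 0$. For each $\phi\in C(\bbT)$ the strong-weak pairings give
\begin{equation*}
\int_\bbT\!\bigl(2\phi\,u_* - \phi^2\rho^\alpha\bigr)\dx x \ =\ \lim_{N\to\infty}\int_\bbT\!\bigl(2\phi\,\iota_N u^N - \phi^2\,\iota_N v^N\bigr)\dx x \ \leq\ \liminf_{N\to\infty}\frac{1}{N}\sum_{k=1}^N\frac{(u_k^N)^2}{v_k^N},
\end{equation*}
using that $\iota_N$ is piecewise constant so that $\int_\bbT(\iota_N u^N)^2/(\iota_N v^N)\dx x = \tfrac{1}{N}\sum_k(u_k^N)^2/v_k^N$. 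Taking the supremum over $\phi$ and comparing with~\eqref{eq:RelaxedSlope.BenamouBrenier} yields~\eqref{eq:LiminfStatSlope}. For~\eqref{eq:LiminfSlopeT}, Proposition~\ref{p:StrongCompactness} and Lemma~\ref{lem:ImprovedRegularity} together with interpolation promote the convergence $\iota_N c^N \to \rho$ to $\rmL^{\max\{\alpha,1\}}([0,T]\times\bbT)$, which by Fubini gives (along a further subsequence) $\iota_N c^N(t)\to\rho(t)$ in $\rmL^{\max\{\alpha,1\}}(\bbT)$ for a.e.\ $t$; applying the static liminf pointwise and Fatou's lemma closes the argument.

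The main obstacle is the identification of the weak limit of $\iota_N u^N$: the piece $w^N\,\Delta^N w^N$ is a strong$\times$weak product that has to be handled with the correct duality, while the product $(\partial_-^N w^N)(\partial_+^N w^N)$ crucially exploits the strong $\rmL^2$-compactness of both one-sided differences available from Corollary~\ref{cor:StaticBounds}(B), rather than a mere weak $\rmL^2$-bound. Once these weak and strong limits are secured, the standard joint-convex lower-semicontinuity of $u^2/v$ together with the Stolarsky lower bound on $\sigma_\alpha$ completes the proof.
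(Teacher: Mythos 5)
Your proposal follows essentially the same route as the paper's proof: the Stolarsky lower bound on $\sigma_\alpha$, the substitution $w=c^{\alpha/2}$ with the identity $N^2(w_{k-1}w_{k+1}-w_k^2)=w_k(\Delta^N w)_k-(\partial_-^N w)_k(\partial_+^N w)_k$, strong convergence of the denominator and of the one-sided difference quotients (Corollary~\ref{cor:StaticBounds}), weak $\rmL^2$ convergence of $\iota_N\Delta^N w^N$, lower semicontinuity of the jointly convex integrand $u^2/v$ matched against the form \eqref{eq:RelaxedSlope.BenamouBrenier}, and Fatou's lemma for the time-integrated estimate. The only difference is cosmetic: you make the convexity step explicit via the Legendre representation $u^2/v=\sup_\phi(2\phi u-\phi^2 v)$, which is exactly the mechanism the paper invokes implicitly (note only that the product $\iota_N(w^N\Delta^N w^N)$ converges weakly in $\rmL^1$, not $\rmL^2$, as you correctly use in the pairing).
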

\begin{proof}
We first show that \eqref{eq:LiminfStatSlope} implies \eqref{eq:LiminfSlopeT}. 
For this we use $S_{\alpha,N}(c)\geq 0$ and apply Fatou's lemma:
\begin{align*}
\liminf_{N\to \infty} \int_0^T S_{\alpha,N}(c^N(t))\d t 
 \overset{\text{Fatou}}\geq  
 \int_0^T \liminf_{N\to \infty} S_{\alpha,N}(c^N(t))\d t  
 \overset{\text{\eqref{eq:LiminfStatSlope}}}\geq  
 \int_0^T \cS_\alpha(\rho(t))\d t  ,
\end{align*}
where for the last estimate we used that Proposition \ref{p:StrongCompactness} 
implies $\iota_N c^N(t) = \rho^N(t) \to \rho(t) $ for a.e.~$t\in [0,T]$ in 
$\rmL^{\max\{\alpha,1\}}(\bbT)$. 

To establish the static liminf estimate \eqref{eq:LiminfStatSlope}, we use the lower bound from Assumption \ref{ass:Activity}:
\begin{align*}
    S_{\alpha,N}(c) &= \frac1N \sum_ {k=1}^N 2N^4 \,\sigma_\alpha(c_{k-1},c_k,c_{k+1})
     \, \bigl( c_k-\sqrt{c_{k-1}c_{k+1}}\bigr)^2
  \\
   & \geq \frac1N \sum_{k=1}^N N^4 \frac{2}{\alpha^2} 
      \frac{\bigl( c_k^\alpha {-} (c_{k-1}c_{k+1})^{\alpha/2}\bigr)^2}
            { \max\{c_k^\alpha, (c_{k-1}c_{k+1})^{\alpha/2}\}}
    =  \frac{2}{\alpha^2} \int_\bbT   \frac{\bigl(N^2\iota_N ( c_{k}^{\alpha} 
 -(c_{k-1}c_{k+1})^{\alpha/2}) \bigr)^2}{\iota_N\bigl(\max\{c_k^\alpha, (c_{k-1}c_{k+1})^{\alpha/2}\} \bigr)} \dx x \,,
\end{align*}
where the last equality holds because the functions in the fraction are piecewise constant. 

Introducing $w_k:=c_k^{\alpha/2}$, we can now rewrite
\begin{align*}
 N^2 \Bigl( \sqrt{c_{k-1}^{\alpha} c_{k+1}^{\alpha}}-c_{k}^\alpha \Bigr) 
 &= N^2(w_{k-1}w_{k+1}-w_k^2) \\
 &=N^2\bigl( w_{k} (w_{k-1}-2w_{k}+w_{k+1})-(w_{k}-w_{k-1})(w_{k+1}-w_{k})\bigr)\\
 &=w_{k}\bigl(\Delta^{N}w\bigr)_{k}-\bigl(\partial^N_{-}
    w\bigr)_{k}\bigl(\partial^N_{+}w\bigr)_{k}.
\end{align*}
We now replace $c$ by $c^N$ and $w$ by $w^N$, respectively, and assume $\iota_N c^N 
\to \rho$ in $\rmL^{\max\{\alpha,1\}}(\bbT)$ and $\liminf_{N\to \infty} S_{\alpha,N}(c^N) =:\beta$.  
In the case $\beta= \infty$ there is nothing to be shown. We thus consider the 
case $\beta < \infty$ and further assume (after extracting a subsequence, not 
relabeled) that $ S_{\alpha,N}(c^N)\to \beta$. With this, we are going to show that
\begin{subequations}
    \label{eq:ConvDeNu}
\begin{alignat}{2}
\iota_N \Bigl(\max\{c_k^\alpha, (c_{k-1}c_{k+1})^{\alpha/2}\}  \Bigr)
&\ \to\  \rho^\alpha &&\text{ in } \L^1(\bbT) 
\label{eq:ConvDenom}\\
\iota_N \Bigl(w_{k}^N\bigl(\Delta^{N}w^N\bigr)_{k}-
 \bigl( \partial^N_{-}w^N\bigr)_{k} \bigl(\partial^N_{+}w^N\bigr)_{k} \Bigr)
 &\ \rightharpoonup \ \omega \partial_{xx} \omega - (\partial_x \omega)^2 
  \quad &&\text{ in }  \L^1(\bbT) \label{eq:ConvNumer}.
\end{alignat}
\end{subequations}
Since $(\iota_Nc^N)_N$ converges  to $\rho$ in $\L^{\max\{\alpha,1\}}(\bbT)$ the  convergence of  
\eqref{eq:ConvDenom} follows by dominated convergence.

To show \eqref{eq:ConvNumer}, we treat both terms separately. By Lemma 
\ref{lem:SpatReg}, we have that $(\iota_N\Delta^Nw^N)_N$ is uniformly bounded 
in $\L^2(\bbT)$, and hence it converges weakly in $\L^2(\bbT)$ to a 
function that can be identified by $\partial_{xx}\omega$ (because the differential 
operator is outside). Moreover, since $\iota_Nc^N$ converges to $\rho$ in $\L^{\max\{\alpha,1\}}(\bbT)$ have the strong convergence  
$\iota_N w^N \to \omega$ in $\L^2(\bbT)$. Hence, for the product of a weakly and a strongly 
converging sequence, we obtain the weak convergence $\iota_N(w^N\Delta^Nw^N)
= (\iota_Nw^N ) (\iota_N\Delta^Nw^N) \rightharpoonup \omega \partial_{xx} \omega$.

Corollary \ref{cor:StaticBounds} implies that the sequence
$(\iota_N\partial_\pm^N w^N)_{N\in\N}$  is compact in $\L^2(\bbT)$. 
Hence, they converge strongly and the limit is $\partial_x\omega$ (because the
differential operator is outside). So also their product converges strongly in 
$\L^1(\bbT)$. Hence, the convergence \eqref{eq:ConvNumer} is established.

To show the liminf estimate, we exploit the joint convexity of the function 
$(u,v)\mapsto\frac{u^{2}}{v}$ and the  convergences \eqref{eq:ConvDeNu} 
and conclude that 
\begin{align*}
\liminf_{N\to\infty}\int_{0}^{T}S_{\alpha,N}(c^N)\dx t \geq  
 \frac{2}{\alpha^2}\int_{0}^{T}
  \int_{\mathbb{T}}\frac{\bigl(\omega\partial_{xx}\omega
  -\bigl(\partial_{x}\omega\bigr)^{2}\bigr)^{2}}{\rho^\alpha}\dx x\dx t  = \mathcal{S}_\alpha(\rho),
\end{align*}
by the rewriting of the slope \eqref{eq:RelaxedSlope.BenamouBrenier}.
This proves the static liminf estimate \eqref{eq:LiminfStatSlope}. 
\end{proof}

\section{Chain rule and weak solutions}

Our approach to the chain rule is stimulated by the usage of the Hilbert space
$\rmL^2\rmL^2=\rmL^2([0,T]{\times}\bbT)$ as in
\cite{FehGes23NELD,GesHey25PMEL}, but we use a much more direct (and probably
less general) approach by introducing the \emph{modified flux $ V $} and
the \emph{modified slope $\Sigma$} given by
\begin{equation}
  \label{eq:Modi.Flux.Slope}
   V  = \rho^{-\alpha/2} j \quad \text{and} \quad \Sigma = -\frac2\alpha
  \Bigl(\partial_{xx}\bigl( \rho^{\alpha/2}\bigr) - 4 \bigl| \partial_x
  (\rho^{\alpha/4})\bigr|^2 \Bigr).
\end{equation}
They are chosen such that the dissipation functional $\mathcal{D}_{\alpha}$ in~\eqref{eq:def:dissipation} takes thanks to~\eqref{eq:RelaxedSlope.a} the form
\[
\mathcal{D}_\alpha(\rho,J)= \int_0^T\int_\bbT\Bigl[ \frac{j^2}{2\rho^\alpha} +
     \frac{\rho^\alpha}2 ( \partial_{xx} \log \rho)^2 \Bigr] \dx x \dx t 
 =  \int_0^T\int_\bbT\Bigl[ \frac12 | V |^2 + \frac12 |\Sigma|^2 \Bigr] \dx x
   \dd t .
\] 
The advantage is that $\mathcal{D}_\alpha(\rho,j) <\infty$ gives a clear control of $ V $ and 
$\Sigma$ in $\rmL^2\rmL^2$.  

The desired chain rule then takes the form 
\[
 \cE(\rho(s))-\cE(\rho(r)) = -\int_r^s \int_\bbT \Sigma\  V  
  \dd x \dd t, 
\]
see Proposition \ref{pr:ChainRule} below. However, to achieve this goal for
general $\rho$ with $\mathcal{D}_\alpha(\rho,j)< \infty$, we exploit the relation first for smooth
approximations $\rho_{\eps,\delta}$, $\Sigma_{\eps,\delta}$, and
$V_{\eps,\delta}$ and need to control the passages to the limit for $\eps\to 0$ and
$\delta \to 0$. 

The following result shows that using mollifications $\rho_\eps$ and $V_\eps$ allows
a control of $V_\eps$ and $\Sigma_\eps$ for suitable $\alpha$. In other cases,
we can only proceed by assuming additionally boundedness of $\rho$, namely 
$  \rho\in \rmL^\infty([0,T] {\times} \bbT)$. Choosing a non-negative
mollifier $\phi\in \rmH^2(\R)$ with $\operatorname{supp}(\phi)\subset [-1,1]$, $\int_\R
\phi\dd y =1$ and setting 
$\phi_\eps(y) = \eps^{-1}\phi(y/\eps)$, we define the smoothed density and flux
\begin{equation}
  \label{eq:Mollification}
  \rho_\eps (t,x) =\bigl(\rho(t,\cdot)* \phi_\eps\bigr)(x) \quad
\text{and} \quad j_{\eps}(t,x) = \bigl( j(t,\cdot)*\phi_\eps\bigr)(x).
\end{equation}

The following result is classical for convex functionals, but our proof for
$\alpha\in (1,3/2)$ is slightly more general, because we only know convexity
of the slope $\rho\mapsto \cS_\alpha(\rho) $ for $\alpha\in [3/2,2]$, see
\eqref{eq:RelaxedSlope.c}.  

\begin{lemma}[Bounds for convolutions]\label{lem:BoundsConvol}
Assume that $(\rho,j)$ satisfy $\mathcal{D}_\alpha(\rho,j)<\infty$ and that $\sup_{t\in[0,T]}\cE(\rho(t))<\infty$. Define
$(\rho_\eps,j_\eps)$ via \eqref{eq:Mollification}. 

(A) If $\alpha \in [0,1]$,  then $V_\eps =\rho_\eps^{-\alpha/2}j_\eps \to
V=\rho^{-\alpha/2}j$ in $\rmL^2\rmL^2$. 

(B) If $\alpha\in [1,2]$, then $\Sigma_\eps = - 
\frac2\alpha\bigl(\partial_{xx}\rho_\eps^{\alpha/2} -4 |\partial_x
\rho_\eps^{\alpha/4}|^2\bigr) \to \Sigma $  in $\rmL^2\rmL^2$. 
\end{lemma}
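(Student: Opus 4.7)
The plan is to exploit joint convexity of the integrands defining $V$ and $\Sigma$, so that Jensen's inequality (applied pointwise after convolution with $\phi_\eps$) yields a uniform $\rmL^2\rmL^2$-bound for the mollified quantities in terms of the unmollified ones. Combined with weak $\rmL^2$-compactness and a pointwise a.e.\ identification of the limit, strong convergence will then follow from the ``weak plus norm equals strong'' principle in Hilbert space.

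For Part (A), I would check that the map $F(r,s):=s^2/r^\alpha$, extended by $F(0,0)=0$, is jointly convex on $[0,\infty)\times\R$ precisely for $\alpha\in[0,1]$: a direct Hessian computation gives determinant $2\alpha(1{-}\alpha)s^2r^{-2\alpha-2}\geq 0$ in this range. Jensen's inequality for $(\rho_\eps,j_\eps)=(\rho,j)*\phi_\eps$ yields the pointwise estimate $V_\eps^2=F(\rho_\eps,j_\eps)\leq F(\rho,j)*\phi_\eps = V^2*\phi_\eps$, so $\|V_\eps\|_{\rmL^2\rmL^2}\leq\|V\|_{\rmL^2\rmL^2}$. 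Along a subsequence, $\rho_\eps\to\rho$ and $j_\eps\to j$ a.e., hence $V_\eps\to V$ a.e.\ on $\{\rho>0\}$; the set $\{\rho=0\}$ is handled by the absolute continuity $j\ll\rho$ built into finiteness of $\cR_\alpha$. Since $\|V_\eps\|_2\to\|V\|_2$ (Fatou for the $\liminf$, the convexity bound for the $\limsup$), weak $\rmL^2$-convergence upgrades to strong.

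For Part (B), I would split into the subcases $\alpha\in[3/2,2]$ and $\alpha\in[1,3/2)$. In the first I use representation \eqref{eq:RelaxedSlope.c}: the integrands $q^2/r^{2-\alpha}$ and $p^4/r^{4-\alpha}$ are jointly convex on $[0,\infty)\times\R$ for $\alpha\in[1,2]$ and $\alpha\in[1,4]$ respectively (by Hessian computations analogous to Part (A)), and the coefficient $(2\alpha{-}3)/6$ is non-negative for $\alpha\geq 3/2$, so the full integrand is jointly convex in the triple $(\rho,\partial_x\rho,\Delta\rho)$. Jensen applied to $(\rho,\partial_x\rho,\Delta\rho)*\phi_\eps$ then gives $\cS_\alpha(\rho_\eps)\leq\cS_\alpha(\rho)$, i.e.\ $\|\Sigma_\eps\|_{\rmL^2\rmL^2}\leq\|\Sigma\|_{\rmL^2\rmL^2}$, and one concludes exactly as in (A).

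The main obstacle is the remaining subcase $\alpha\in[1,3/2)$, where $(2\alpha{-}3)/6<0$ and the slope fails to be convex in $(\rho,\partial_x\rho,\Delta\rho)$. Here I would instead work from representation \eqref{eq:RelaxedSlope.b}, which expresses $\|\Sigma\|_{\rmL^2}^2$ as the sum of $\|\Delta\rho^{\alpha/2}\|_{\rmL^2}^2$ and $\|\partial_x\rho^{\alpha/4}\|_{\rmL^4}^4$. The integration-by-parts identity \eqref{eq:UsefulRelation}, applied to the smooth $\rho_\eps^{\alpha/4}$, absorbs the quartic gradient term into the $\rmL^2$-Laplacian term, so it remains to bound $\|\Delta(\rho_\eps)^{\alpha/2}\|_{\rmL^2}$ uniformly in $\eps$. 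Since $r\mapsto r^{\alpha/2}$ is concave for $\alpha\leq 2$, a one-sided Jensen estimate compares $(\rho_\eps)^{\alpha/2}$ with $\rho^{\alpha/2}*\phi_\eps$, and a standard mollifier commutator estimate transfers the $\rmH^2$-regularity of $\rho^{\alpha/2}$ to $(\rho_\eps)^{\alpha/2}$ up to an $\eps$-independent constant. This yields the required uniform $\rmL^2$-bound on $\Sigma_\eps$; strong convergence follows as in Part (A), using weak $\rmL^2$-compactness, pointwise a.e.\ identification of the limit via the a.e.\ convergence $\rho_\eps\to\rho$, and the convergence of norms.
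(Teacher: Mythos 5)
Part (A) of your proposal is essentially the paper's own argument: convexity of $(r,s)\mapsto s^2/r^\alpha$ for $\alpha\in[0,1]$ (equivalently, of the primal dissipation $\cR_\alpha$), Jensen for convolutions giving $\|V_\eps\|_{\rmL^2\rmL^2}\leq\|V\|_{\rmL^2\rmL^2}$, weak compactness, identification of the limit, and the upgrade to strong convergence via convergence of norms. (The paper identifies the weak limit through the relation $j_\eps=\rho_\eps^{\alpha/2}V_\eps$ rather than through a.e.\ convergence, which avoids the delicate behaviour of $V_\eps$ on $\{\rho=0\}$; your version needs the norm inequality to force the limit to vanish there, but this is a repairable detail.) Your treatment of the subcase $\alpha\in[3/2,2]$ via convexity of the integrand in \eqref{eq:RelaxedSlope.c} is also a legitimate variant, although ``conclude exactly as in (A)'' hides one point: to identify the weak limit of $\Sigma_\eps$ you must pass to the limit in the \emph{nonlinear} term $|\nabla\rho_\eps^{\alpha/4}|^2$, which requires \emph{strong} $\rmL^4\rmL^4$ convergence of $\nabla\rho_\eps^{\alpha/4}$; so the Jensen/weak-compactness/norm-upgrade scheme has to be run on each of the two terms of $\Sigma_\eps$ separately, not on $\Sigma_\eps$ as a whole.

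The genuine gap is in your subcase $\alpha\in[1,3/2)$. The step ``a one-sided Jensen estimate compares $(\rho_\eps)^{\alpha/2}$ with $\rho^{\alpha/2}*\phi_\eps$, and a standard mollifier commutator estimate transfers the $\rmH^2$-regularity'' is not a valid argument. The concavity of $r\mapsto r^{\alpha/2}$ only yields the pointwise inequality $(\rho*\phi_\eps)^{\alpha/2}\geq (\rho^{\alpha/2})*\phi_\eps$, and a one-sided pointwise comparison of two functions gives no control whatsoever on $\|\Delta[(\rho*\phi_\eps)^{\alpha/2}]\|_{\rmL^2}$ in terms of $\|\Delta[\rho^{\alpha/2}]\|_{\rmL^2}$; nor is there a standard commutator estimate for an outer concave power composed with a convolution (the difficulty of such nonlinear compositions is exactly what the paper's ``Critical spaces'' remark illustrates). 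The paper closes this gap differently: instead of trying to make $\cS_\alpha$ itself convex, it introduces the auxiliary functional
$\mathfrak S(\rho)=\int_0^T\!\int_\bbT\bigl((\Delta\rho)^2/\rho^{2-\alpha}+|\nabla\rho|^4/\rho^{4-\alpha}\bigr)\dx{x}\dx{t}$,
in which the problematic coefficient $(2\alpha{-}3)/3$ of \eqref{eq:RelaxedSlope.c} is replaced by $+1$. Each summand is jointly convex in $(\rho,\nabla\rho,\Delta\rho)$ for the whole range $\alpha\in[1,2]$ (convexity of $(a,b)\mapsto a^\beta/b^\gamma$ for $\gamma\geq0$, $\beta\geq1{+}\gamma$), and $\mathfrak S(\rho)\leq c_\alpha\int_0^T\cS_\alpha(\rho)\dx{t}$ because the sum-of-squares form \eqref{eq:RelaxedSlope.b} controls both $\|\Delta\rho^{\alpha/2}\|_{\rmL^2}$ and $\|\nabla\rho^{\alpha/4}\|_{\rmL^4}$ with nonnegative coefficients. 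Jensen applied to $\mathfrak S$ then yields the uniform bounds on $\rho_\eps^{\alpha/2-1}\Delta\rho_\eps$ in $\rmL^2\rmL^2$ and $\rho_\eps^{\alpha/4-1}\nabla\rho_\eps$ in $\rmL^4\rmL^4$, from which the strong convergences $\Delta\rho_\eps^{\alpha/2}\to\Delta\rho^{\alpha/2}$ and $\nabla\rho_\eps^{\alpha/4}\to\nabla\rho^{\alpha/4}$, and hence $\Sigma_\eps\to\Sigma$, follow for all $\alpha\in[1,2]$ at once. You should replace your $[1,3/2)$ argument by this domination-by-a-convex-majorant device.
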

\begin{proof}

For part (A) we use $\alpha\in [0,1]$ giving the convexity of the primal dissipation potential
$(\rho,J)\mapsto \cR(\rho,j) = \frac12\iint \rho^{-\alpha} j^2 \dd x
\dx t$. By Jensen's inequality for convolutions we have
$\cR(\rho_\eps,j_\eps) =\cR(\rho,j)\leq \mathcal{D}_\alpha(\rho,j)<\infty$ and conclude
boundedness of $V_\eps =\rho_\eps^{-\alpha/2} j_\eps$, namely
$\|V_\eps\|_{\rmL^2\rmL^2} \leq  \|V\|_{\rmL^2\rmL^2} <\infty$. Hence, along a
subsequence (not relabeled) we have $V_\eps \rightharpoonup W$ in
$\rmL^2\rmL^2$, and $\|W\|_{\rmL^2\rmL^2}\leq \liminf_{\eps\to 0}
\|V_\eps\|_{\rmL^2\rmL^2} \leq \|V\|_{\rmL^2\rmL^2}$. To identify the limit, we 
recall by Proposition \ref{lem:ImprovedRegularity} that 
$\rho_\eps^{\alpha/2}\in\L^r\L^r$ with $r>2$, and thus obtain  that  
$j_\eps =\rho_\eps^{\alpha/2}V_\eps$ is bounded in $\rmL^{q}\rmL^{q}$ with 
$q>1$. Passing to the weak limit in the last relation we obtain 
$j=\rho^{\alpha/2} W$, which implies $W= V$. Hence, part (A) is established.

For part (B) we define the convex functional 
\[
\mathfrak S(\rho):=\int_0^T\!\int_\bbT\!  \Bigl(\frac{(\partial_{xx}
   \rho)^2}{\rho^{2-\alpha}} + \frac{|\partial_x \rho|^4}{\rho^{4-\alpha} }
  \Bigr)\dd x \dd t.
\]
The convexity for $\alpha\in [1,2]$ follows from the convexity of
$\R{\times}]0,\infty[\ni (a,b)\mapsto a^\beta/b^\gamma $ for $\gamma\geq 0$ and
$\beta\geq 1{+}\gamma$.  
Moreover, the slope representations \eqref{eq:RelaxedSlope.c} implies
$\mathfrak S(\rho)\leq c_\alpha \int_0^T\cS_\alpha(\rho)\dd t\leq c_\alpha
\mathcal{D}_\alpha(\rho,j)$.  

Arguing as for part (A) with Jensen's inequality for convolutions we find  
\begin{align*}
&\partial_x \rho_\eps^{\alpha/4} =\tfrac4\alpha  \rho_\eps^{\alpha/4-1} \partial_x \rho_\eps 
  \to  \tfrac4\alpha  \rho^{\alpha/4-1}\partial_x \rho = \partial_x \rho^{\alpha/4} 
 \text{ in }\rmL^4\rmL^4 \quad  \text{and} \quad 
\\
& \partial_{xx} \rho_\eps^{\alpha/2} 
 = \tfrac2\alpha \rho_\eps^{\alpha/2-1}\partial_{xx} \rho_\eps
  {+} \tfrac{4{-}2\alpha}{\alpha^2} \rho^{\alpha/2-2}_\eps(\partial_x\rho_\eps)^2
   \to \tfrac2\alpha \rho^{\alpha/2-1}\partial_{xx} \rho
  {+} \tfrac{4{-}2\alpha}{\alpha^2} \rho^{\alpha/2-2}(\partial_x\rho)^2 
    =\partial_{xx} \rho^{\alpha/2} \text{ in }\rmL^2\rmL^2.
\end{align*}
Inserting this into the definition of $\Sigma_\eps$ we arrive at $\Sigma_\eps
\to \Sigma$, which is part (B).
\end{proof}

We are now ready to establish our chain rule, where the function $\rho$ with
$\mathcal{D}_\alpha(\rho,j)<\infty$ has to satisfy additional bounds depending on
$\alpha$, see \eqref{eq:CR.AddiCond}. Only for $\alpha=1$ we obtain the full result without further
conditions, in all other case we need boundedness of $\rho$ or even strict
positivity. The proof relies on approximation via smooth and positive
functions.

\begin{proposition}[Chain rule]\label{pr:ChainRule}
For $\alpha>0$ consider a pair $(\rho,j)\in \CE$, i.e.~$\partial_t\rho=\partial_{xx} j$ in the sense of distribution, satisfying the bound 
  $\mathcal{D}_\alpha(\rho,j)<\infty$ and $\sup_{t\in[0,T]}\cE(\rho(t))<\infty$. Moreover, assume one of the following three additional
  conditions: 
\begin{align}
\tag*{\eqref{eq:CR.AddiCond.a}}
& \alpha=1;
\\
\tag*{\eqref{eq:CR.AddiCond.b}}
&\alpha\in {]0,2]} \ \text{ and } \  \rho \in \rmL^\infty([0,T]{\times} \bbT);
\\
\tag*{\eqref{eq:CR.AddiCond.c}}
& \alpha>0,  \ \rho \in \rmL^\infty([0,T]{\times} \bbT), \text{ and } \
\exists\,\delta>0: \ \rho(t,x) \geq \delta \text{ a.e.}
\end{align}
Then, for all subintervals
  $[r,s]\subset [0,T]$ we have the identity
\begin{equation}
  \label{eq:ChainRule.st}
  \cE(\rho(s))-\cE(\rho(r)) = -\int_r^s \int_\bbT \Sigma\  V  
  \dd x \dd t.
\end{equation}
where the modified flux $ V $ and the modified slope
$\Sigma$ are defined in \eqref{eq:Modi.Flux.Slope}.
\end{proposition}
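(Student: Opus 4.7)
The plan is to establish \eqref{eq:ChainRule.st} first for smooth and strictly positive densities, where it reduces to a direct integration by parts, and then to pass to the limit via mollification (with a lift by $\delta>0$ to secure positivity where needed), relying on Lemma~\ref{lem:BoundsConvol} and the case-specific assumptions \eqref{eq:CR.AddiCond.a}--\eqref{eq:CR.AddiCond.c}.

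For smooth positive $\rho$ and smooth $j$ with $\partial_t \rho = \Delta j$, I would compute
\[
\frac{\dx{}}{\dx t}\cE(\rho) = \int_\bbT (\log\rho)\,\Delta j\,\dx x = \int_\bbT (\Delta \log \rho)\,j\,\dx x = -\int_\bbT \Sigma\,V\,\dx x,
\]
where the last equality uses the pointwise identity $\rho^{\alpha/2}\Delta\log\rho = -\Sigma$ (already derived in the proof of Lemma~\ref{lem:RelaxedSlope}) together with $V = \rho^{-\alpha/2}j$. Integration over $[r,s]$ yields the chain rule in this regime.

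For the general case I would introduce for $\eps,\delta>0$ the smoothed and lifted quantities $\rho_{\eps,\delta} := \rho \ast \phi_\eps + \delta$ and $j_\eps := j \ast \phi_\eps$, which still satisfy the continuity equation $\partial_t\rho_{\eps,\delta}=\Delta j_\eps$ (supplementing with a time mollification as needed to get genuine differentiability in~$t$). Applying the smooth chain rule to $(\rho_{\eps,\delta},j_\eps)$ yields
\[
\cE(\rho_{\eps,\delta}(s)) - \cE(\rho_{\eps,\delta}(r)) = -\int_r^s\!\int_\bbT \Sigma_{\eps,\delta}\,V_{\eps,\delta}\,\dx x\,\dx t,
\]
and the energy terms pass to the limit by continuity of~$\cE$ together with dominated convergence using $\sup_t\cE(\rho(t))<\infty$. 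The core task is therefore $L^2L^2$-convergence $\Sigma_{\eps,\delta}\to\Sigma$ and $V_{\eps,\delta}\to V$, so that the product converges in $L^1$, and this is precisely where the three auxiliary conditions enter: under \eqref{eq:CR.AddiCond.a} both parts of Lemma~\ref{lem:BoundsConvol} apply at once (since $1\in[0,1]\cap[1,2]$) and the $\delta$-limit is immediate; under \eqref{eq:CR.AddiCond.c} the uniform lower bound $\rho\geq\delta_0$ transfers to $\rho_\eps$, so $\rho_\eps^{-\alpha/2}$ is uniformly bounded and both convergences follow from standard mollifier arguments without any lifting.

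The most delicate case is \eqref{eq:CR.AddiCond.b}, where Lemma~\ref{lem:BoundsConvol} supplies only one of the two convergences: for $\alpha\in[0,1]$ one has $V_\eps\to V$ from part~(A) and must produce $\Sigma_\eps\to\Sigma$, while for $\alpha\in[1,2]$ the situation is reversed. I would close this gap by combining the $L^\infty$-bound on $\rho$ with the finite dissipation, which in particular gives $j\in L^2L^2$ via $j^2\le\|\rho\|_\infty^\alpha\,(j^2/\rho^\alpha)$ and controls the missing factor through the relation $j_\eps = \rho_{\eps,\delta}^{\alpha/2}V_{\eps,\delta}$ and the representation \eqref{eq:RelaxedSlope.b} of the slope. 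I expect the main obstacle to be exactly this identification: the mollification does not commute with the nonlinear power $\rho\mapsto\rho^{\alpha/2}$, so the weak $L^2$-limit of $V_\eps$ (respectively of $\Sigma_\eps$) must be pinned down by a careful uniqueness argument exploiting $\rho\in L^\infty$, the continuity equation, and the lower semicontinuity of the convex functionals appearing in $\mathcal{D}_\alpha$.
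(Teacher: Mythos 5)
Your overall architecture (smooth positive case by integration by parts, then mollification plus a positivity lift, with Lemma~\ref{lem:BoundsConvol} handling $\alpha=1$) matches the paper's Steps 1--3 and 5, and your treatment of cases \eqref{eq:CR.AddiCond.a} and \eqref{eq:CR.AddiCond.c} is essentially the paper's. The genuine gap is in case \eqref{eq:CR.AddiCond.b}, which you yourself flag as unresolved: for $\alpha\in{]0,2]}$ with only an upper bound on $\rho$, Lemma~\ref{lem:BoundsConvol} gives you at most one of the two $\rmL^2\rmL^2$ convergences, and your proposed fix --- pinning down the weak limit of the missing quantity via ``a careful uniqueness argument'' --- is not an argument. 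In particular, for $\alpha\in[1,2]$ you would need $V_\eps=\rho_\eps^{-\alpha/2}j_\eps\to V$, but without a lower bound $\rho_\eps^{-\alpha/2}$ is not controlled, and the nonlinearity does not commute with convolution; nothing in your outline closes this.

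The paper's resolution is structurally different at exactly this point: it decouples the two regularizations and orders the limits. One first proves the chain rule under the \emph{two-sided} bound $\rho\in[\delta,M]$ by spatial mollification alone (your case (c) argument), and only then removes the lower bound by replacing $\rho$ with $\rho_\delta=\rho+\delta$ and sending $\delta\to0$ \emph{without any mollification}. In that second limit $V_\delta=(\rho+\delta)^{-\alpha/2}j$ satisfies $|V_\delta|\le|V|$ pointwise, so dominated convergence applies, and for the slope one uses the explicit algebraic identity
\begin{equation*}
-\Sigma_\delta=\Bigl(\tfrac{\rho}{\rho+\delta}\Bigr)^{1-\alpha/2}\Bigl(-\Sigma+\tfrac{16\,\delta}{\alpha^2(\rho+\delta)}\,|\nabla\rho^{\alpha/4}|^2\Bigr),
\end{equation*}
which for $\alpha\le2$ yields the integrable majorant $|\Sigma_\delta|\le|\Sigma|+C_\alpha|\nabla\rho^{\alpha/4}|^2$ and hence $\Sigma_\delta\to\Sigma$ in $\rmL^2\rmL^2$. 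This identity is the missing key idea; it is also precisely where the restriction $\alpha\le2$ enters (the prefactor $(\rho/(\rho+\delta))^{1-\alpha/2}$ is bounded by $1$ only then, and the paper notes after the proof that the argument genuinely fails for $\alpha>2$). Your combined regularization $\rho_{\eps,\delta}=\rho*\phi_\eps+\delta$ obscures this because after mollification the flux $j_\eps$ is no longer the fixed function $j$, so the pointwise domination of $V_{\eps,\delta}$ is lost.
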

\begin{proof}
\underline{\emph{Step 1. Smooth and positive case}:} We first consider the case that
$\rho \in \rmW^{1,2}([0,T];\rmL^2(\bbT))\cap \rmL^2\rmH^2$ with
$\rho(t,x) \geq \delta >0$. In this
case, we can first apply the classical chain rule for convex functionals in
$\rmL^2(\bbT)$, see e.g.\ \cite[Lem.\,3.3]{Brez73OMMS}, and then integrate by
parts to obtain
\begin{align}
\cE(\rho(s))-\cE(\rho(r))  \nonumber
& = \int_r^s\int_\bbT \log \rho \,\partial_t\rho \dd x \dd t 
  =  \int_r^s\int_\bbT \log \rho \:(\partial_{xx} j) \dd x \dd t
\\ 
\label{eq:ChainRuleSmooth}
&= \int_r^s\int_\bbT \partial_{xx} \bigl(\log \rho) \,j \dd x \dd t 
 = \int_r^s\int_\bbT  \Bigl(\frac{\partial_{xx} \rho}\rho -\frac{|\partial_x
   \rho|^2}{\rho^2} \Bigr) \, j   \dd x \dd t 
\\ \nonumber
&  = \int_r^s\int_\bbT  \rho^{\alpha/2} \Bigl(\frac{\partial_{xx} \rho}\rho -\frac{|\partial_x
   \rho|^2}{\rho^2} \Bigr) \, \frac{j}{\rho^{\alpha/2}}   \dd x \dd t 
   = -  \int_r^s\int_\bbT  \Sigma \,  V  \dd x \dd t. 
\end{align}
This establishes the desired identity for the smooth case. 

\smallskip
\noindent
\underline{\emph{Step 2. Smoothing of positive $\rho$}:} We start now with a
general pair $(\rho,j)$ satisfying $\mathcal{D}_\alpha(\rho,j) < \infty$ and
$\rho(t,x) \in [\delta,M]$ with $0<\delta<M=\|\rho\|_{\rmL^\infty}<\infty$. 

Using bound $| V |^2=\rho^{-\alpha}|j|^2\in \rmL^1(0,T;\rmL^1(\bbT))$ and
$\rho\leq M$, we obtain $j \in \rmL^2\rmL^2$. Using the mollification from 
\eqref{eq:Mollification} the pair 
$(\rho_\eps,j_\eps)$ still satisfies the linear continuity
equation $\partial_t\rho+\partial_{xx} j=0$ and the upper and lower bound
$\rho_\eps\in [\delta, M]$.  For $\eps>0$ we have $\rho_\eps \in
\rmL^2\rmH^2$. Moreover, $V_\eps\in \rmL^2\rmL^2$ yields $\rho_\eps \in
\rmW^{1,2}\rmL^2$. Hence, Step 1 can 
be applied, i.e.\ \eqref{eq:ChainRuleSmooth} holds: 
	\begin{gather}
  \label{eq:CR.eps}
  \cE(\rho_\eps(s))-\cE(\rho_\eps(r)) 
   = -  \int_r^s\int_\bbT  \Sigma_\eps \,  V _\eps \dd x \dd t \\
 \ \text{  with } \Sigma_\eps = -\frac2\alpha\bigl( \partial_{xx}
 (\rho_\eps)^{\alpha/2} -4| \partial_x (\rho_\eps^{\alpha/4})|^2\bigr) 
\text{ and }  V _\eps= \rho_\eps^{-\alpha/2}j_\eps. \nonumber
\end{gather}

\smallskip
\noindent
\underline{\emph{Step 3. Limit $\eps \to 0$}:} We keep $\delta>0$ fixed and
consider the limit $\eps\to 0$ in \eqref{eq:CR.eps}. As $\cE$ is convex we have
$\cE(\rho_\eps (t))\leq \cE(\rho(t))$ by Jensen's
inequality, and using the lower semicontinuity $\liminf_{\eps\to
  0}\cE(\rho_\eps(t)) \geq \cE(\rho(t)$ we conclude $\cE(\rho_\eps (t)) \to 
\cE(\rho(t))$ as $\eps \to 0$, for all $t\in [0,T]$.  

Moreover, for $\eps \to 0$ we obtain $ V _\eps \to  V $ strongly in
$\rmL^2\rmL^2$ by using the pointwise convergence
$\rho_\eps(t,x)^{-\alpha/2} \to \rho(t,x)^{-\alpha/2}
\in \bigl[M^{-\alpha/2}, \delta^{-\alpha/2}\bigr] $ a.e.\ in
$[0,T]{\times} \bbT$ and the strong $\rmL^2$ convergence $j_\eps \to j$.

For the slope we use $\Sigma \in \rmL^2\rmL^2$ which implies $\partial_x
\rho^{\alpha/4} \in \rmL^4\rmL^4$ and $\partial_{xx} \rho^{\alpha/2} \in
\rmL^2\rmL^2$. For $\alpha>4$ we exploit $\rho\geq \delta>0$ and for $\alpha\in
]0,4[$ we use $\rho\leq M$ to conclude $\partial_x \rho\in \rmL^4\rmL^4$. 
Similarly, we find $\partial_{xx} \rho\in \rmL^2\rmL^2$ by using $\rho\in [\delta,M]$.  
Hence, we have $\partial_x \rho_\eps \to \partial_x \rho$ in $\rmL^4\rmL^4$ and 
$\partial_{xx} \rho_\eps \to \partial_{xx}\rho$ in $\rmL^2\rmL^2$ and find 
\[
\partial_x \rho_\eps^{\alpha/4}= \rho_\eps^{\alpha/4 -1} \partial_x \rho_\eps \to
\rho^{\alpha/4-1}\partial_x \rho= \partial_x \rho^{\alpha/4} \ 
\text{ strongly in }\rmL^4\rmL^4.
\] 
Similarly, we obtain $\partial_{xx} \rho_\eps^{\alpha/2}\to \partial_{xx} \rho$ strongly in
$\rmL^2\rmL^2$ and hence, $\Sigma_\eps \to \Sigma $ strongly in $\rmL^2\rmL^2$.

Now the limit $\eps\to 0$ in \eqref{eq:CR.eps} yields the chain rule
\eqref{eq:ChainRule.st} under the conditions in
\eqref{eq:CR.AddiCond.c}. 

\smallskip
\noindent
\underline{\emph{Step 4. The case $\alpha\leq 2$}:}  
In this case, we have to show that the lower bound $\rho\geq \delta>0$ is not
needed. We do this by considering $\rho_\delta=\delta{+}\rho$ and taking the
limit $\delta\to 0$. For $\delta>0$ we can apply Step 3 and easily see that
$\cE(\rho_\delta(t))\to \cE(\rho(t))$ as $\delta \to 0$, for all $t\in
[0,T]$.  Moreover, $ V _\delta=\rho_\delta^{-\alpha/2} j$ satisfies $| V _\delta|
\leq | V |$ such that $ V _\delta \to  V $ strongly in
$\rmL^2\rmL^2$ by dominated convergence. This part works for all $\alpha >0$. 

For the convergence $\Sigma_\delta \to \Sigma$, we observe the explicit
representation
\begin{equation}
  \label{eq:Sigma.delta}
  -\Sigma_\delta = (\rho{+}\delta)^{\alpha/2}\Bigl(\frac{\partial_{xx}\rho}{\rho{+}\delta} -
\frac{|\partial_x \rho|^2}{({\rho{+}\delta})^2}\Bigr) = 
\Bigl(\frac{\rho}{\rho{+}\delta}\Bigr)^{1-\alpha/2}\Bigl( - \Sigma
    + \frac{16\,\delta}{\alpha^2(\rho{+}\delta)} \bigl|\partial_x \rho^{\alpha/4}|^2
    \Bigr) .
\end{equation}
Thus, using $\alpha \in ]0,2]$ we find $|\Sigma_\delta|\leq |\Sigma| + 
C_\alpha |\partial_x \rho^{\alpha/4}|^2$, which is an integrable pointwise majorant
for $|\Sigma_\delta|^2$. Moreover, we easily see $\Sigma_\delta(t,x)\to \Sigma(t,x)$
a.e.\ for $\delta\to 0$. Thus, $\Sigma_\delta \to \Sigma$ strongly in~$\rmL^2\rmL^2$. Hence, the final chain rule \eqref{eq:ChainRule.st} for $\rho$
follows from that for $\rho{+} \delta$, under the conditions~\eqref{eq:CR.AddiCond.b}.

\smallskip
\noindent
\underline{\emph{Step 5. The case $ \alpha = 1 $}:} We now exploit the result
of Lemma \ref{lem:BoundsConvol}, which shows $V_\eps\to V$ and $\Sigma_\eps \to
\Sigma$ in $\rmL^2\rmL^2$ without any upper or lower bound. Again starting from 
the smooth and positive case \eqref{eq:CR.eps} we can pass to the limit to
obtain the chain rule \eqref{eq:ChainRule.st} for $\alpha=1$ and general
$(\rho,j)$ with $\mathcal{D}_\alpha(\rho,j)<\infty$. 
\end{proof} 
We note that the restriction $\alpha\leq 2$ in Step 4 of the above proof
cannot be removed easily. For $\alpha>2$ we can choose $\gamma $ with $3/\alpha
< \gamma < 3/2$ 
and consider $\rho$ with $\rho(x)=|x|^\gamma$ for $|x|\leq 1/4$ and smooth
otherwise. Then, $\Sigma\in \rmL^2\rmL^2$ with
$\Sigma(x)= - \gamma|x|^{\alpha\gamma/2-2}$ for $|x|\leq 1/4$. However,
$\Sigma_\delta$ in~\eqref{eq:Sigma.delta} satisfies 
$\Sigma_\delta(x) \approx - (\delta{+}|x|^\gamma)^{\alpha/2-1}\gamma
|x|^{\gamma-1}$, and hence does not lie in $\rmL^2\rmL^2$. It remains open to
show the chain rule (under $\rmL^\infty$ bounds) for $\alpha >2$ when no
positivity bound is assumed.

We can now prove our main result on EDB and weak solutions (Result \ref{result:EDB-weaksol}), which again uses
the additional conditions \eqref{eq:CR.AddiCond} on $\rho$ if $\alpha\neq 1$.

\begin{proof}[Proof of Result \ref{result:EDB-weaksol}]
We first show the EDB, i.e.\ for all $r,s$ with $0\leq r<s\leq T$ we
have
\begin{equation}
  \label{eq:EDBSol}
  \cE(\rho(s)) + \int_r^s\!\! \int_\bbT \Bigl(\frac{j^2}{2\rho^\alpha} +
\frac{\Sigma^2}2\Bigr) \dd x \dd t = \cE(\rho(r)).
\end{equation} Applying Proposition  \ref{pr:ChainRule} for EDI solutions $(\rho,j)$ satisfying $\mathcal{D}_\alpha(\rho,j) < \infty$, we obtain 
with the energy-dissipation inequality  and the chain rule
\eqref{eq:ChainRule.st} that 
\[
0\geq \cE(\rho(T))-\cE(\rho(0))+\mfD(\rho) =\int_0^T\!\!\int_\bbT
\Bigl(\! {-} \Sigma\, V  + \frac12| V |^2{+}\frac12 |\Sigma|^2\Bigr) \dd x \dd t
= \int_0^T\!\!\int_\bbT
\frac12 \bigl| V  {-} \Sigma \bigr|^2\dd x \dd t.
\]
Thus, we conclude $ V =\Sigma = \frac12 V ^2 + \frac12 \Sigma^2$. Thus,
\eqref{eq:EDBSol} follows from the chain rule \eqref{eq:ChainRule.st} and the
relation $V=\rho^{-\alpha/2}j$.

To show that $(\rho,j)$ is a weak solution with a well-defined flux 
$j=\rho^{\alpha/2}V$ we use higher integrability of $\rho$ 
(cf.\ Lemma \ref{lem:ImprovedRegularity} for the discrete case). Using 
$\int_0^T\cS_\alpha(\rho)\dd t \leq \mathcal{D}_\alpha(\rho,j) <\infty$ 
and $\cE(\rho(t))\leq \cE(\rho(0))<\infty$ we obtain that 
EDB solutions $\rho$  satisfy $\rho \in \rmL^{q_*}\rmL^{q_*}$ 
with $q_* = \max\{4{+}\alpha,2\alpha\}$. For this, we consider first 
$\alpha\in ]0,4]$ and use $u=\rho^{\alpha/4} \in \rmL^\infty 
\rmL^{4/\alpha} \cap \rmL^4 \rmW^{1,4}$. With interpolation 
and a version of the Gagliardo-Nirenberg estimate, see e.g.~\cite[Append.~C]{HeinzeMielkeStephan2025},
we obtain 
$u \in \rmL^{4+16/\alpha} \rmL^{4+16/\alpha} $,  and $\rho = u^{4/\alpha} 
\in \rmL^{4+\alpha}\rmL^{4+\alpha}$ follows. For $\alpha\geq 4$ we have
$u=\rho^{\alpha/4} \in \rmL^\infty \rmL^1 \cap \rmL^4 \rmW^{1,4} \subset
\rmL^8\rmL^8$ and conclude $\rho=u^{4/\alpha}\in \rmL^{2\alpha}\rmL^{2\alpha}$.

Moreover, we know $\rho^{\alpha/2} \in \rmL^2\rmH^2$ and
$\rho^{\alpha/4} \in \rmL^4\rmW^{1,4}$. Hence, for a.a.\ $t\in
[0,T]$ we can apply the product rule in Sobolev spaces:
\begin{align*}
&\partial_{xx} (\rho^\alpha)= \partial_{xx}\bigl(\rho^{\alpha/2}\, \rho^{\alpha/2}\bigr) =
2\rho^{\alpha/2} \partial_{xx}( \rho^{\alpha/2}) + 2 \bigl|\partial_x (\rho^{\alpha/2})\bigr|^2,
\\
&
\partial_x (\rho^{\alpha/2}) = \partial_x \bigl(\rho^{\alpha/4}\,\rho^{\alpha/4}\bigr) =
2 \rho^{\alpha/4} \,\partial_x( \rho^{\alpha/4}) \quad \Longrightarrow \quad
\bigl|\partial_x \rho^{\alpha/2}\bigr|^2 =  4 \rho^{\alpha/2} \bigl|\partial_x
\rho^{\alpha/4}\bigr|^2 \,.
\end{align*}
With this we find the identity 
\[
j = \rho^{\alpha/2} V  =  \rho^{\alpha/2} \Sigma
  =  -\rho^{\alpha/2}\,
    \frac2\alpha \Bigl( \partial_{xx}(\rho^{\alpha/2})
    - 4 \bigl|\partial_x(\rho^{\alpha/4})\bigr|^2\Bigr)
  =  -\frac1\alpha \partial_{xx}(\rho^\alpha)
    - \frac4\alpha \bigl|\partial_x(\rho^{\alpha/2})\bigr|^2.
\]
From $ \rho^{\alpha/2} \in \rmL^{2q_*/\alpha}$ and $\partial_{xx}(\rho^{\alpha/2}), 
|\partial_x(\rho^{\alpha/4})|^2 \in \rmL^2\rmL^2$ we conclude 
$j \in \rmL^{p_\alpha}\rmL^{p_\alpha} $ with exponent $ p_\alpha= 2q_*/(q_*{+}\alpha)  = \max\bigl\{(4{+}\alpha)/(2{+}\alpha),4/3 \bigr\} $.

Of course, if we additionally know that $\rho$ is bounded, then 
$ j \in \rmL^2\rmL^2$.

Combining this with the weak form of the continuity equation finishes the proof.
\end{proof}

\appendix
 {
\section{Incompatibility with Otto-Wasserstein gradient flow structure}\label{appendix:OttoFisher}
In this section, we discuss whether equation \eqref{eq:DLSS:alpha} can also be understood as a formal Otto-Wasserstein gradient flow of a weighted Fisher-information type functional. For this, we consider a general Otto-Wasserstein gradient flow with a nonlinear power-type mobility of the form
\begin{equation}\label{eq:OttoFisher}
	\partial_t \rho = \partial_x \bra[\big]{ \rho^{\alpha-\beta+\gamma+1} \partial_x \mathcal{F}'_{\beta,\gamma}(\rho)}
\end{equation}
with a generalized Fisher information given in terms of $\beta,\gamma\in \R$ by
\[
\mathcal{F}_{\beta,\gamma}(\rho) = \int \frac{\abs[\big]{\partial_x \rho}^{\beta} }{\rho^{\gamma}} \dd x  \,.
\]
Note, that the homogeneity in~\eqref{eq:OttoFisher} is chosen such that the resulting right-hand side is $\alpha$ homogeneous in $\rho$ matching the homogeneity of~\eqref{eq:DLSS:alpha}.
The first variation is given by
\[
\mathcal{F'}_{\beta,\gamma}(\rho) = - \beta(\beta{-}1) \frac{\partial_{xx} \rho 
\,\bra[\big]{\partial_x \rho}^{\beta-2}}{\rho^\gamma} +  (\beta{-}1) 
\gamma  \frac{(\partial_x \rho)^\beta}{\rho^{\gamma+1}} \,.
\]
The transport flux inside of the divergence in~\eqref{eq:OttoFisher} has the form
\begin{align*}
	\rho^{\alpha-\beta+\gamma+1} \partial_x \mathcal{F}'_{\beta,\gamma}(\rho) 
	&= - \beta(\beta{-}1) \partial_{xxx} \rho \, 
      (\partial_x \rho)^{\beta-2} \rho^{\alpha-\beta+1} \\
	&\quad -\beta(\beta{-}1)(\beta{-}2) (\partial_{xx} \rho)^2 
     (\partial_x \rho)^{\beta-3} \rho^{\alpha -\beta+1} \\
	&\quad +2\beta(\beta{-}1)\gamma \partial_{xx} \rho \, 
    (\partial_x \rho)^{\beta{-}1} \rho^{\alpha-\beta} \\
	&\quad -(\beta{-}1) \gamma(\gamma{+}1) (\partial_x \rho)^{\beta+1} 
    \rho^{\alpha-\beta{-}1}  \, .
\end{align*}
The classical flux inside of~\eqref{eq:DLSS:alpha} is given by
\begin{align*}
	-\partial_x ( \rho^{\alpha} \partial_{xx} \log \rho) &=- \partial_x\bra[\bigg]{\rho^\alpha \bra[\Big]{\frac{\partial_{xx}\rho}{\rho} - \frac{(\partial_x \rho)^2}{\rho^2}}} \\
	&= -\partial_x\bra[\Big]{ \partial_{xx} \rho \, \rho^{\alpha-1} - (\partial_x \rho)^2 \rho^{\alpha -2}} \\
	&= -\partial_{xxx} \rho \,\rho^{\alpha-1} - (\alpha{-}3) \partial_{xx} \rho 
     \,\partial_x \rho \, \rho^{\alpha-2}  + (\alpha{-}2) (\partial_x \rho)^3 \rho^{\alpha-3}.
\end{align*}
Comparing leading terms, we obtain $\beta=2$ and an overall prefactor $1/2$. Hence, we arrive at
\[
\tfrac{1}{2}\rho^{\alpha-2+\gamma+1} \partial_x \mathcal{F}'_{2,\gamma}(\rho) 
= -\partial_{xxx}\rho \, \rho^{\alpha-1}  +2 \gamma \partial_{xx}\rho \,
\partial_x \rho^{\alpha-2} -\tfrac{1}{2} \gamma(\gamma{+}1) 
(\partial_x \rho)^3 \rho^{\alpha -3}.
\]
For comparing the remaining coefficients, we end up with the two algebraic constraints
\[
3-\alpha = 2\gamma \quad\text{and}\quad 2-\alpha = \tfrac{1}{2}\gamma (\gamma+1).
\]
The system has exactly two solutions: $\alpha=1$, $\gamma=1$ and $\alpha=-1$, $\gamma=2$. The first 
case is the classic DLSS equation ($\alpha=1$) and the second case falls out of our analysis.
Let us still note, that for the second case the mobility for the Otto-Wasserstein metric tensor 
is constant ($\alpha-\beta+\gamma+1=0)$, thus the resulting gradient flow falls into the classical 
$H^{-1 }$ Hilbert-space framework. The equation is formally given by
\begin{equation}\label{eq:NegMobilityFisher}
	\partial_t \rho_t = -\partial_{xx}\bra[\big]{ \rho^{-1} \partial_{xx} \log \rho} = -\partial_{xx} \bra[\Big]{ \frac{\rho_{xx}}{\rho^2} - \frac{(\partial_x\rho)^2}{\rho^3}}
	= \partial_{xx} \mathcal{F}_{2,2}'(\rho)
\end{equation}
with driving energy $
\mathcal{F}_{2,2}(\rho) =  \frac{1}{2} \int |\partial_x \rho|^2/\rho^2 \dx{x} 
= \frac{1}{2}\int |\partial_x \log \rho|^2 \dx{x} $.}

\bibliography{bib}

\newcommand{\etalchar}[1]{$^{#1}$}
\providecommand{\bysame}{\leavevmode\hbox to3em{\hrulefill}\thinspace}
\providecommand{\MR}{}
\begin{thebibliography}{11}\itemsep0.1em

\bibitem[AGS05]{AGS2005}
L.~Ambrosio, N.~Gigli, and G.~Savar{\'e}, \emph{Gradient {{Flows}} in {{Metric
  Spaces}} and in the {{Space}} of {{Probability Measures}}}, Birkhäuser
  Basel, {Basel}, 2005.

\bibitem[BE{\etalchar{*}}17]{Julia-2017}
J.~Bezanson, A.~Edelman, S.~Karpinski, and V.~B.~Shah: \emph{Julia: A fresh
  approach to numerical computing}. SIAM {R}eview \textbf{59}:1 (2017) 65--98.

\bibitem[BLS94]{Bleher}
P.~M.~Bleher, J.~L.~Lebowitz, and E.~R.~Speer: \emph{Existence and positivity
  of solutions of a fourth-order nonlinear {PDE} describing interface
  fluctuations}. Commun. Pure Appl. Math. \textbf{47}:7 (1994) 923--942.

\bibitem[Br{\'e}73]{Brez73OMMS}
H.~Br{\'e}zis, \emph{Op{\'e}rateurs maximaux monotones et semi-groupes de
  contractions dans les espaces de {H}ilbert}, North-Holland Publishing Co.,
  Amsterdam, 1973.

\bibitem[DFM19]{DoFrMi19GSWE}
P.~Dondl, T.~Frenzel, and A.~Mielke: \emph{A gradient system with a wiggly
  energy and relaxed {EDP}-convergence}. ESAIM Control Optim. Calc. Var.
  \textbf{25}:68 (2019) 68/1--45.

\bibitem[DGMT80]{DeGiorgiMarinoTosques80}
E.~De~Giorgi, A.~Marino, and M.~Tosques: \emph{Problems of evolution in metric
  spaces and curves of maximal slope}. Atti Accad. Naz. Lincei Rend. Cl. Sci.
  Fis. Mat. Natur. (8) \textbf{68}:3 (1980) 180--187.

\bibitem[DiL15]{DisserLiero}
K.~Disser and M.~Liero: \emph{On gradient structures for {M}arkov chains and
  the passage to {W}asserstein gradient flows}. Netw. Heterog. Media
  \textbf{10}:2 (2015) 233--253.

\bibitem[DL{\etalchar{*}}91a]{DerridaLebowitzSpeerSpohn1991a}
B.~Derrida, J.~L.~Lebowitz, E.~R.~Speer, and H.~Spohn: \emph{Dynamics of an
  anchored {T}oom interface}. J. Phys. A: Math. Gen. \textbf{24}:20 (1991)
  4805.

\bibitem[DL{\etalchar{*}}91b]{DerridaLebowitzSpeerSpohn1991b}
B.~Derrida, J.~L.~Lebowitz, E.~R.~Speer, and H.~Spohn: \emph{Fluctuations of a
  stationary nonequilibrium interface}. Phys. Rev. Lett. \textbf{67} (1991)
  165--168.

\bibitem[DMR05]{DegondMehatsRinghofer}
P.~Degond, F.~M{\'e}hats, and C.~Ringhofer: \emph{Quantum energy-transport and
  drift-diffusion models}. J. Stat. Phys. \textbf{118} (2005) 625--667.

\bibitem[EHS25]{EspositoHeinzeSchlichting2025}
A.~Esposito, G.~Heinze, and A.~Schlichting: \emph{Graph-to-local limit for the
  nonlocal interaction equation}. J. Math. Pures Appl. \textbf{194} (2025)
  103663.

\bibitem[FeG23]{FehGes23NELD}
B.~Fehrman and B.~Gess: \emph{Non-equilibrium large deviations and
  parabolic-hyperbolic {PDE} with irregular drift}. Invent. Math.
  \textbf{234}:2 (2023) 573--636.

\bibitem[Fis13]{Fischer-uniqueness}
J.~Fischer: \emph{Uniqueness of solutions of the
  {D}errida--{L}ebowitz--{S}peer--{S}pohn equation and quantum drift-diffusion
  models}. Commun. Partial Differ. Equ. \textbf{38}:11 (2013) 2004--2047.

\bibitem[FrL21]{FreLie21EDTS}
T.~Frenzel and M.~Liero: \emph{Effective diffusion in thin structures via
  generalized gradient systems and {EDP}-convergence}. Discr. Cont. Dynam.
  Systems Ser.~S \textbf{14}:1 (2021) 395--425.

\bibitem[GeH25]{GesHey25PMEL}
B.~Gess and D.~Heydecker: \emph{The porous medium equation: large deviations
  and gradient flow with degenerate and unbounded diffusion}. Comm. Pure Appl.
  Math. \textbf{78}:9 (2025) 1609--1655.

\bibitem[GiO01]{GiacomelliOtto2001}
L.~Giacomelli and F.~Otto: \emph{Variational formulation for the lubrication
  approximation of the {H}ele-{S}haw flow}. Calc. Var. Partial Differential
  Equations \textbf{13}:3 (2001) 377–403.

\bibitem[GST09]{GianazzaSavareToscani2009}
U.~Gianazza, G.~Savar{\'e}, and G.~Toscani: \emph{The {W}asserstein gradient
  flow of the {F}isher information and the quantum drift-diffusion equation}.
  Arch. Ration. Mech. An. \textbf{194}:1 (2009) 133--220.

\bibitem[HLS25]{HoeksemaLamSchlichting2025}
J.~Hoeksema, C.~Y.~Lam, and A.~Schlichting: \emph{Variational convergence for
  an irreversible exchange-driven stochastic particle system}. Preprint
  arXiv:2401.06696 (2025) .

\bibitem[HMS25]{HeinzeMielkeStephan2025}
G.~Heinze, A.~Mielke, and A.~Stephan: \emph{Discrete-to-continuum limit for
  nonlinear reaction-diffusion systems via {EDP} convergence for gradient
  systems}. Preprint arXiv:2504.06837 (2025) .

\bibitem[HPS24]{HeinzePietschmannHeinze2024arXiv}
G.~Heinze, J.-F.~Pietschmann, and A.~Schlichting: \emph{Gradient flows on
  metric graphs with reservoirs: Microscopic derivation and multiscale limits}.
  Preprint arXiv:2412.16775 (2024) .

\bibitem[HST24]{HraivoronskaSchlichtingTse2023}
A.~Hraivoronska, A.~Schlichting, and O.~Tse: \emph{Variational convergence of
  the {S}charfetter--{G}ummel scheme to the aggregation-diffusion equation and
  vanishing diffusion limit}. Num. Math. \textbf{156}:6 (2024) 2221–2292.

\bibitem[J{\"{u}}M08]{JuengelMatthes2008}
A.~J{\"{u}}ngel and D.~Matthes: \emph{The
  {D}errida--{L}ebowitz--{S}peer--{S}pohn equation: Existence, nonuniqueness,
  and decay rates of the solutions}. SIAM J. Math. Anal. \textbf{39}:6 (2008)
  1996--2015.

\bibitem[LM{\etalchar{*}}17]{LieroMielkePeletierRenger2017}
M.~Liero, A.~Mielke, M.~A.~Peletier, and D.~R.~M.~Renger: \emph{On microscopic
  origins of generalized gradient structures}. Discrete Contin. Dyn. Syst.
  \textbf{10}:1 (2017) 1--35.

\bibitem[MaM20]{MaaMie20MCRS}
J.~Maas and A.~Mielke: \emph{Modeling of chemical reaction systems with
  detailed balance using gradient structures}. J. Stat. Physics \textbf{181}
  (2020) 2257--2303.

\bibitem[Mie16]{Mielke2016}
A.~Mielke, \emph{{On Evolutionary $\varGamma$-Convergence for Gradient
  Systems}}, Macroscopic and Large Scale Phenomena: Coarse Graining, Mean Field
  Limits and Ergodicity (A.~Muntean, J.~Rademacher, and A.~Zagaris, eds.),
  vol.~3, {Springer International Publishing}, {Cham}, 2016, pp.~187--249.

\bibitem[Mie23a]{Miel23IAGS}
\bysame, \emph{An introduction to the analysis of gradient systems}, WIAS
  Preprint 3022, arXiv:2306.05026, 2023, (Script of a lecture course 2022/23,
  100\,pp.).

\bibitem[Mie23b]{Miel23NESS}
\bysame: \emph{Non-equilibrium steady states as saddle points and
  {EDP}-convergence for slow-fast gradient systems}. J. Math. Physics
  \textbf{64}:123502 (2023) 1--27.

\bibitem[MMP21]{MiMoPe21EFED}
A.~Mielke, A.~Montefusco, and M.~A.~Peletier: \emph{Exploring families of
  energy-dissipation landscapes via tilting --- three types of {EDP}
  convergence}. Contin. Mech. Thermodyn \textbf{33} (2021) 611--637.

\bibitem[MMS09]{MatthesMcCannSavare2009}
D.~Matthes, R.~J.~McCann, and G.~Savaré: \emph{A family of nonlinear fourth
  order equations of gradient flow type}. Commun. Partial Differ. Equ.
  \textbf{34}:11 (2009) 1352–1397.

\bibitem[MP{\etalchar{*}}17]{MPPR17NETP}
A.~Mielke, R.~I.~A.~Patterson, M.~A.~Peletier, and D.~R.~M.~Renger:
  \emph{Non-equilibrium thermodynamical principles for chemical reactions with
  mass-action kinetics}. SIAM J. Appl. Math. \textbf{77}:4 (2017) 1562--1585.

\bibitem[MPR14]{MielkePeletierRenger2014}
A.~Mielke, M.~A.~Peletier, and D.~R.~M.~Renger: \emph{On the relation between
  gradient flows and the large-deviation principle, with applications to
  {M}arkov chains and diffusion}. Potential Anal. \textbf{41} (2014)
  1293--1327.

\bibitem[MPS21]{MielkePeletierStephan2021}
A.~Mielke, M.~A.~Peletier, and A.~Stephan: \emph{{EDP}-convergence for
  nonlinear fast–slow reaction systems with detailed balance}. Nonlinearity
  \textbf{34}:8 (2021) 5762--5798.

\bibitem[MR{\etalchar{*}}25]{MRSS}
D.~Matthes, E.-M.~Rott, G.~Savaré, and A.~Schlichting: \emph{A structure
  preserving discretization for the {Derrida-Lebowitz-Speer-Spohn} equation
  based on diffusive transport}. Numer. Math. \textbf{157}:4 (2025) 1347--1395.

\bibitem[MRS25]{MRS2025}
D.~Matthes, E.-M.~Rott, and A.~Schlichting: \emph{Diffusive transport on the
  real line: semi-contractive gradient flows and their discretization}.
  Nonlinearity \textbf{38}:11 (2025) 115005.

\bibitem[Ott98]{Otto1998}
F.~Otto: \emph{Lubrication approximation with prescribed nonzero contact
  angle}. Commun. Partial Differ. Equ. \textbf{23}:11–12 (1998) 2077–2164.

\bibitem[PeS23]{PeletierSchlichting2022}
M.~A.~Peletier and A.~Schlichting: \emph{Cosh gradient systems and tilting}.
  Nonlinear Anal. \textbf{238}:113094 (2023) 1--113.

\bibitem[PR{\etalchar{*}}22]{PRST22}
M.~A.~Peletier, R.~Rossi, G.~Savar{\'e}, and O.~Tse: \emph{Jump processes as
  generalized gradient flows}. Calc. Var. Partial Dif. \textbf{61}:1 (2022)
  1--85.

\bibitem[RMS08]{RoMiSa08MACD}
R.~Rossi, A.~Mielke, and G.~Savar\'{e}: \emph{A metric approach to a class of
  doubly nonlinear evolution equations and applications}. Ann. Sc. Norm. Super.
  Pisa Cl. Sci. (5) \textbf{VII}:1 (2008) 97--169.

\bibitem[RoS03]{RoSa03}
R.~Rossi and G.~Savar\'{e}: \emph{Tightness, integral equicontinuity and
  compactness for evolution problems in {B}anach spaces}. Ann. Sc. Norm. Super.
  Pisa Cl. Sci. (5) \textbf{2}:2 (2003) 395--431.

\bibitem[SaS04]{SandierSerfaty2004}
E.~Sandier and S.~Serfaty: \emph{Gamma‐convergence of gradient flows with
  applications to {G}inzburg‐{L}andau}. Commun. Pure Appl. Math.
  \textbf{57}:12 (2004) 1627--1672.

\bibitem[Ser11]{Serfaty2011}
S.~Serfaty: \emph{Gamma-convergence of gradient flows on {H}ilbert and metric
  spaces and applications}. Discrete Cont. Dyn-a. \textbf{31}:4 (2011)
  1427--1451.

\bibitem[Ste21]{Stephan2021a}
A.~Stephan: \emph{{EDP}-convergence for a linear reaction-diffusion system with
  fast reversible reaction}. Calc. Var. Partial Dif. \textbf{60}:6 (2021) 226.

\bibitem[Sto75]{Stolarsky1975}
K.~B.~Stolarsky: \emph{Generalizations of the logarithmic mean}. Math. Mag.
  \textbf{48}:2 (1975) 87–92.

\bibitem[Van07]{vanKampen2007}
N.~G.~Van~Kampen, \emph{Stochastic {{Processes}} in {{Physics}} and
  {{Chemistry}}}, Elsevier, 2007.

\bibitem[Vaz06]{Vazquez2006}
J.~L.~Vazquez, \emph{The porous medium equation: Mathematical theory}, Oxford
  University PressOxford, October 2006.

\end{thebibliography}
\bibliographystyle{alpha_AMs}

\end{document}